\newtheorem{question}{Question}
\newtheorem{prop}{Proposition}[section]
\newtheorem{thm}[prop]{Theorem}
\newtheorem{lemma}[prop]{Lemma}
\newtheorem{cor}[prop]{Corollary}
\theoremstyle{definition}
\newtheorem{defn}[prop]{Definition}
\DeclarePairedDelimiter\floor{\lfloor}{\rfloor}
\newcommand{\Pin}{\mathrm{Pin}(2)}
\newcommand{\spin}{\mathfrak{s}}
\newcommand{\EHess}{\widehat{\mathrm{Hess}}}
    \def\HMt{%
       \setbox0=\hbox{$\widehat{\mathit{HM}}$}
       \setbox1=\hbox{$\mathit{HM}$}
       \dimen0=1.1\ht0
       \advance\dimen0 by 1.17\ht1
       \smash{\mskip2mu\raise\dimen0\rlap{%
          \begin{turn}{180}
              {$\widehat{\phantom{\mathit{HM}}}$}
           \end{turn}} \mskip-2mu    
                \mathit{HM}
    }{\vphantom{\widehat{\mathit{HM}}}}{}}
    \def\HSt{%
       \setbox0=\hbox{$\widehat{\mathit{HS}}$}
       \setbox1=\hbox{$\mathit{HS}$}
       \dimen0=1.1\ht0
       \advance\dimen0 by 1.17\ht1
       \smash{\mskip2mu\raise\dimen0\rlap{%
          \begin{turn}{180}
              {$\widehat{\phantom{\mathit{HS}}}$}
           \end{turn}} \mskip-2mu    
                \mathit{HS}
    }{\vphantom{\widehat{\mathit{HS}}}}{}}
    \newcommand{\HMb}{\overline{\mathit{HM}}}
\newcommand{\HMf}{\widehat{\mathit{HM}}}
    \newcommand{\HSb}{\overline{\mathit{HS}}}
\newcommand{\HSf}{\widehat{\mathit{HS}}}
\newcommand{\Rin}{\mathcal{R}}
\newcommand{\V}{\mathcal{V}}
\newcommand{\dd}{\mathbf{d}}
\newcommand{\ztwo}{\mathbb{F}}
\theoremstyle{remark}
\newtheorem{remark}{Remark}[section]
\begin{document}
\title{Homology cobordism and the geometry of hyperbolic three-manifolds}

\author{Francesco Lin}
\address{Department of Mathematics, Columbia University} 
\email{flin@math.columbia.edu}

\begin{abstract}
A major challenge in the study of the structure of the three-dimensional homology cobordism group is to understand the interaction between hyperbolic geometry and homology cobordism. In this paper, for a hyperbolic homology sphere $Y$ we derive explicit bounds on the relative grading between irreducible solutions to the Seiberg-Witten equations and the reducible one in terms of the spectral and Riemannian geometry of $Y$. Using this, we provide explicit bounds on some numerical invariants arising in monopole Floer homology (and its $\Pin$-equivariant refinement). We apply this to study the subgroups of the homology cobordism group generated by hyperbolic homology spheres satisfying certain natural geometric constraints.

\end{abstract}

\maketitle

\section*{Introduction}
Differential topology in $4=3+1$ dimensions is dramatically different from all other dimensions. An object that perfectly encapsulates this radical difference is the homology cobordism group $\Theta_n$ consisting of smooth homology $n$-spheres up to homology cobordism with connected sum as operation: it was shown by Kervaire \cite{Ker} that $\Theta_n$ is always a finite group for $n\neq3$, while Furuta \cite{Fur} proved that $\Theta_3$ contains a subgroup isomorphic to $\mathbb{Z}^{\infty}$ (hence in particular it is not finitely generated).
\par
In the past ten years, the study of the algebraic structure of $\Theta_3$ has attracted considerable renewed attention following Manolescu's disproof of the longstanding Triangulation Conjecture in high dimensions \cite{Man}: work of Galewski-Stern \cite{GS} and Matumoto \cite{Mat} shows that this is implied by the Rokhlin homomorphism
\begin{equation*}
\mu:\Theta_3\rightarrow \mathbb{Z}/2\mathbb{Z}
\end{equation*}
not being split. Manolescu proved the latter by exploiting $\Pin$-symmetry in Seiberg-Witten theory. Since then, related ideas in the context of Heegaard Floer homology \cite{HM} have been used to show that $\Theta_3$ admits a $\mathbb{Z}^{\infty}$ summand \cite{DHST}. Despite this, still very little is known about the algebraic structure of $\Theta_3$: for example, it is an open question whether there are torsion or divisible elements in $\Theta_3$.
\\
\par
As we are dealing with three-manifolds, it is natural to approach this question from the point of view of Thurston's geometrization. Using tools from involutive Heegaard Floer homology, it was shown in \cite{HHSZ} that Seifert fibered spaces do not generate $\Theta_3$, and in fact the quotient of $\Theta_3$ by Seifert spaces is infinitely generated \cite{HHSZ1}. In particular, Seifert manifolds are too narrow of a class to understand the general properties of $\Theta_3$.
\par
On the other hand, Myers showed \cite{Mye} that every element in $\Theta_3$ admits a hyperbolic representative. Because of this, in order to understand $\Theta_3$, a key problem is to understand the homology cobordism properties of hyperbolic three-manifolds. By Mostow rigidity, geometric invariants of the hyperbolic metric are topological invariants, and we are led to the following fundamental question.
\begin{question}\label{quest1}
For a hyperbolic homology sphere $Y$, is there any relation between the hyperbolic invariants of $Y$ (e.g. volume, injectivity radius, etc.) and the properties of the corresponding class $[Y]\in\Theta_3$?
\end{question}

As a more specific instance of this question, we can consider for $V,\varepsilon>0$ the subgroup $\Theta_{V,\varepsilon}\subset \Theta_3$ generated by the integral homology spheres with volume $\leq V$ and injectivity radius $\geq\varepsilon$. As the latter is a finite set \cite{BP}, $\Theta_{V,\varepsilon}$ is a finitely generated hence proper subgroup of $\Theta_3$. Furthermore, by Myers' theorem we have that
\begin{equation*}
\bigcup_{V,\varepsilon>0}\Theta_{V,\varepsilon}=\Theta_3,
\end{equation*}
hence this family of subgroups provides a natural geometric filtration on $\Theta_3$ by finitely generated subgroups.
\par
A very basic question is then the following.
\begin{question}\label{quest2}
Given $V,\varepsilon>0$, can one exhibit an \textbf{explicit} integral homology sphere $Y$ for which $[Y]\not\in\Theta_{V,\varepsilon}$?
\end{question}

A first complication in addressing this question is that that we do not have a nice `concrete' description of the generators of $\Theta_{V,\varepsilon}$. For example, the well-known Hodgson-Weeks census \cite{HW} is only an approximation of the set of closed orientable hyperbolic manifolds (not necessarily homology spheres) with volume $\leq6.5$ and injectivity radius $\geq0.15$. Furthermore, even if we had such description, understanding the homology cobordism properties of these spaces and their connected sums is by far the most challenging part of problem.
\\
\par
In this paper, while we do not address Question \ref{quest2} directly, we study the analogous of Question \ref{quest2} for certain subgroups of $\Theta_3$ whose definition also involves an interesting quantity arising in spectral geometry, namely the first eigenvalue of the Hodge Laplacian on coexact $1$-forms $\lambda_1^*(Y)$ (for the hyperbolic metric). Our main theorem will then essentially reduce Question \ref{quest2} to a natural but very subtle question in the spectral geometry of hyperbolic homology spheres.
\begin{defn}
Fix $V,\varepsilon,\delta>0$. We define $\Theta_{V,\varepsilon,\delta}$ to be the subgroup of $\Theta_3$ generated by homology spheres $Y$ such that the volume $\mathrm{vol}(Y)$ of $Y$ is at most $V$, the injectivity radius $\mathrm{inj}(Y)$ (or equivalently, half the systole) of $Y$ is at least $\varepsilon$, and the first eigenvalue of the Hodge Laplacian on coexact $1$-forms $\lambda_1^*(Y)$ is at least $\delta$.
\end{defn}
The quantity $\lambda_1^*(Y)$, which is spectral in nature, can be studied in terms in geometric quantities of $Y$, namely its volume and the complex lengths of its geodesics, using a version of the Selberg trace formula \cite{LL}. For example, among the small volume manifolds in the Hodgson-Weeks census, it is shown in \cite{LL} that:
\begin{itemize} 
\item for the manifold labeled $5$ in the Hodgson-Weeks census, which is a homology sphere with volume $\approx 1.39$ and injectivity radius $\approx 0.18$,  $\lambda_1^*>0.04$.
\item for the manifold labeled $34$ in the Hodgson-Weeks census, which is a homology sphere with volume $\approx 1.91$ and injectivity radius $\approx 0.24$, and $\lambda_1^*>0.001$.
\end{itemize}
\begin{remark}Heuristically, one expects in general $\lambda^*_1$ of a hyperbolic integral homology sphere to be quite small. Indeed, any example with $\lambda^*_1>2$ would be an $L$-space by \cite{LL} hence a counterexample to a conjecture of Ozsv\'ath and Szab\'o that the only irreducible integral homology spheres which are $L$-spaces are $S^3$ and the Poincar\'e homology sphere.
\end{remark}
Notice that again that the groups $\Theta_{V,\varepsilon,\delta}$ are finitely generated and Myers' theorem implies that 
\begin{equation*}
\bigcup_{V,\varepsilon,\delta>0}\Theta_{V,\varepsilon,\delta}=\Theta_3.
\end{equation*}
hence also provide a natural geometric filtration by finitely generated subgroups.
\begin{remark}
Of course, we are mostly interested in the case in which $V$ is large, and $\varepsilon$ and $\delta$ are small. To simplify some of the statements, we will assume throughout the paper for convenience that $V\geq0.94$ (this is true for all hyperbolic manifolds by \cite{Mil}), $\varepsilon\leq0.15$ and $\delta\leq 1$.
\end{remark}
We are now ready to state our main result, which answers the analogue of Question \ref{quest2} for the subgroups $\Theta_{V,\varepsilon,\delta}$.
\begin{thm}\label{mainthm}
Given $V,\varepsilon,\delta>0$, there exists an \textbf{explicitly computable} constant $\mathfrak{n}_{V,\varepsilon,\delta}$ such that for $n> \mathfrak{n}_{V,\varepsilon,\delta}$ the Brieskorn sphere $Y_n=\Sigma(2,8n-1,16n-1)$ does not belong to $\Theta_{V,\varepsilon,\delta}$. For example when $\varepsilon=0.15$, the constant
\begin{equation*}
\mathfrak{n}_{V,0.15,\delta}=  4V \cdot\left[200+ \exp\left(11+15\cdot e^{11/2}\cdot V^{7/12}\cdot\left(\cosh(57V)-1\right)^{8/3}\cdot \left(1+\frac{3}{\delta}\right)^{1/2}\right)\right]+6
\end{equation*}
works.
\end{thm}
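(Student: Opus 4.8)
The plan is to set a numerical invariant from ($\Pin$-equivariant) monopole Floer homology against the explicit spectral--geometric estimates of the preceding sections. Fix $V,\varepsilon,\delta$ and let $Z_1,\dots,Z_k$ be the finitely many hyperbolic homology spheres $Z$ with $\mathrm{vol}(Z)\le V$, $\mathrm{inj}(Z)\ge\varepsilon$ and $\lambda_1^*(Z)\ge\delta$, so that $\Theta_{V,\varepsilon,\delta}=\langle[Z_1],\dots,[Z_k]\rangle$. The first step is to feed each $Z_i$ --- and, since $-Z_i$ has the same geometric invariants, also $-Z_i$ --- into the main technical theorem: this bounds the relative grading of the irreducible Seiberg--Witten solutions of $Z_i$, for the hyperbolic metric, both above and below the reducible by an explicit $N=N_{V,\varepsilon,\delta}$, and hence confines the relevant homology cobordism invariant $\phi$ (a Frøyshov-type invariant, or the $\Pin$-refinement of the Rokhlin invariant, with $\phi(-Y)=-\phi(Y)$) to the window $|\phi(Z_i)|\le N$. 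The second step is the complementary computation: $Y_n=\Sigma(2,8n-1,16n-1)$ is Seifert fibered, so its ($\Pin$-equivariant) monopole Floer homology $\HSf(Y_n)$ is accessible through the plumbing/graded-roots description, and the Seifert data $8n-1,16n-1$ are chosen precisely so that the relevant lattice-point count --- equivalently, the grading of the lowest irreducible --- is an explicit linear function of $n$, giving $\phi(Y_n)=an-b$ for explicit $a,b$.

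Granting these two inputs, the theorem follows: if $[Y_n]\in\Theta_{V,\varepsilon,\delta}$, then $Y_n$ is homology cobordant to a connected sum of copies of the $\pm Z_i$, and $\phi$ on such a connected sum cannot keep pace with the linear growth of $\phi(Y_n)=an-b$; this forces $n\le\mathfrak n_{V,\varepsilon,\delta}$. Producing the explicit value of $\mathfrak n$ is then bookkeeping in the Selberg trace formula estimate behind the grading bound: the factor $\bigl(\cosh(57V)-1\bigr)^{8/3}$ collects the contributions of the closed geodesics (whose relevant lengths are controlled by $V$), the factor $(1+3/\delta)^{1/2}$ is the resolvent/heat-kernel bound enforced by $\lambda_1^*\ge\delta$, the factor $V^{7/12}$ and the outer exponential record the Weyl-type growth of the spectrum of the relevant first-order elliptic operator, and the outer $4V$, the inner $200$ and the final $+6$ reflect the precise shape $N_{V,\varepsilon,\delta}=4V\bigl(200+\exp(\cdots)\bigr)$ of the grading bound together with the linear relation between $n$ and $\phi(Y_n)$.

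The crux --- and the step where the precise choice of invariant matters --- is making the connected-sum argument quantitative despite the fact that a class of $\Theta_{V,\varepsilon,\delta}$ is a connected sum of an \emph{a priori unbounded} number of the $\pm Z_i$: a naive subadditivity $\phi(A\#B)\le\phi(A)+\phi(B)$ would only give a bound deteriorating with the number of summands, and so would be useless. The way around this is to work not with $\phi$ alone but with the local equivalence class: a homology cobordism from $Y_n$ to $\#_j\pm Z_{i_j}$ identifies their local equivalence classes, the class of $\#_j\pm Z_{i_j}$ is the tensor product of the classes of the individual $Z_{i_j}$, and the grading-window bound $|\phi(Z_i)|\le N$ must be upgraded to the statement that the local equivalence class of each geometrically constrained $Z_i$ is ``$N$-close to trivial'' in a sense robust enough to survive that tensor product --- so that $\phi$ over all of $\Theta_{V,\varepsilon,\delta}$ stays of order $N$, not $kN$ or worse. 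Establishing exactly this robustness, and identifying which invariant in the $\Pin$-equivariant package carries it, is where the real work lies.
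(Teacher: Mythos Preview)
Your overall architecture is right, and you have correctly isolated the crux: a subadditive invariant is useless because the number of summands is unbounded. But you stop exactly where the paper's argument begins, and your guess at the mechanism (local equivalence classes being ``$N$-close to trivial'' in a tensor-stable sense) is not what is actually used.

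The paper's resolution is more elementary and more precise. The geometric input bounds not a Fr{\o}yshov-type number $\phi(Z_i)$ but the \emph{$U$-torsion width} $t(Z_i)$ of ordinary monopole Floer homology: the spectral-flow bound on the grading of every irreducible relative to the reducible confines all $U$-torsion summands of $\HMt_*(Z_i)$ to a window of explicit length, hence $t(Z_i)\le N_{V,\varepsilon,\delta}$. The key algebraic fact is then
\[
t(Y\#Y')=\max\{t(Y),t(Y')\},
\]
an exact identity (not an inequality) coming from the K\"unneth/Tor description of $\HMf_*$ under connected sum over the PID $\mathbb F[U]$. This gives $t(X)\le N$ for \emph{every} $X$ in the subgroup, with no dependence on the number of summands and no local-equivalence machinery. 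The homology-cobordism invariant that detects $Y_n$ is the $\Pin$-width $w=\alpha-\gamma$; a Gysin-sequence argument gives $w\le 4t+4$, and $w(Y_n)=2n$. So $2n\le 4N+4$ whenever $[Y_n]\in\Theta_{V,\varepsilon,\delta}$.

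Two smaller points. First, your reading of the constants is off: the factor $\bigl(\cosh(57V)-1\bigr)^{8/3}$ has nothing to do with closed geodesics in a trace formula; it comes from the chain (isoperimetric constant) $\Rightarrow$ (Sobolev constant) $\Rightarrow$ ($L^2_1\hookrightarrow L^4$), with the $\cosh$ arising from Chavel's isoperimetric bound in terms of the diameter and the $57V$ from Meyerhoff's diameter bound $d(Y)\le V/(\pi\sinh^2(\varepsilon/2))$ at $\varepsilon=0.15$. The Selberg-type input enters only at the very end, to count eigenvalues of $\EHess\mathcal L_{(B_\bullet,0)}$ in $[-T,T]$ (Weyl-law constants), which produces the outer $4V\cdot[200+\dots]$. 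Second, the invariant of $Y_n$ you need is not linear-in-$n$ via a lattice-point count of ``the lowest irreducible''; it is the explicit $w(Y_n)=2n$, computed from the Seifert description or from $(-1)$-surgery on $T(2,8n-1)$.
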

More in general, our proof provides a completely explicit formula $\mathfrak{n}_{V,\varepsilon,\delta}$ in terms of $V,\varepsilon,\delta$ only involving elementary operations; the expression itself is quite long and the most unpleasant dependence is that on $\varepsilon$, which is the reason why we specialized to $\varepsilon=0.15$ in the statement of the theorem. Of course, the constant $\mathfrak{n}_{V,\varepsilon,\delta}$ that we provide is very far from being sharp (even within the limitations of our techniques).
\\
\par
Let us comment on the role of the three quantities $\mathrm{vol}(Y), \mathrm{inj}(Y)$ and $\lambda_1^*(Y)$ that appear in the statement of the theorem. As mentioned above, there are only finitely many hyperbolic three-manifolds with $\mathrm{vol}(Y)\leq V$ and $\mathrm{inj}\geq\varepsilon$. Of course, this finiteness properties implies that there is a lower bound on $\lambda_1^*(Y)$ in such set of generators in terms of $V,\varepsilon$. The problem is that there is no known \textit{explicit} bound in terms of those quantities. The bounds provided in \cite{LL} take as input significantly more information, namely the volume and a sufficiently large portion of the complex length spectrum of $Y$; furthermore, they involve an optimization procedure that makes them implicit in nature.
\begin{remark}
Let us also point out that the role of $\lambda_1^*$ in Theorem \ref{mainthm} is quite different from the one it played in \cite{LL}. There the lower bound $\lambda_1^*>2$ is used to rule out the \textit{existence} of irreducible solutions to the Seiberg-Witten equations. In the present paper the assumption $\lambda_1^*\geq\delta$ will be used instead to provide \textit{a priori bounds} on their norm, and our arguments will not tell us anything about the number of solutions (which is possibly zero in certain cases).
\end{remark}
In fact, the study of lower bounds for $\lambda_1^*(Y)$ for hyperbolic three-manifolds (not necessarily rational homology spheres) has recently attracted a lot of attention in light of its role in a conjecture on Bergeron and Venkatesh on the growth of torsion in the homology of congruence covers of a fixed arithmetic manifold \cite{BV}. Interesting work providing lower bounds in terms of geometric quantities describing the areas of surfaces with boundary given curves can be found in \cite{LS}, \cite{Rud} and \cite{BC}. Lower bounds of a different nature were provided in \cite{Jam}.
\par
Notice that for hyperbolic rational homology spheres $Y$, a lower bound on $\lambda_1^*(Y)$ implies a lower bound on $\mathrm{inj(Y)}$ which can be \textit{explicitly} computed (see \cite[Section $5$]{LL}). In particular, an analogue of Theorem \ref{mainthm} can be stated for the subgroup $\Theta_{V,\delta}$ generated by hyperbolic homology spheres  $Y$with $\mathrm{vol}(Y)\leq V$ and $\lambda_1^*(Y)\geq \delta$. We decided to keep the dependence on $\mathrm{inj(Y)}$ in the statement of Theorem \ref{mainthm} because the injectivity radius is a very natural quantity to study and plays an explicit role in several points in our argument.
\par
Finally, even though we have focused our discussion on the spectral gap $\lambda_1^*$ (which is by far the hardest quantity to understand), the Riemannian geometry of homology spheres is very challenging to study itself. For example, it is not known whether there are homology spheres with arbitrarily large injectivity radius; for results in this direction, see \cite{BD}.

\begin{remark}
As it will be apparent from the proof, instead of a lower bound on the injectivity radius one might alternatively assume an upper bound on the diameter. This is very natural from the point of view of hyperbolic geometry, as very short geodesics have very large tubes around them, see also Theorem \ref{diamineq} below.
\end{remark}

\vspace{0.3cm}

\textbf{Strategy of the proof. }To relate hyperbolic geometry and homology cobordism we exploit as intermediary the monopole Floer homology package, and in particular the invariants arising from $\Pin$-symmetry \cite{Lin}. Roughly speaking, the proof of Theorem \ref{mainthm} consists of two distinct parts:
\begin{enumerate}
\item using a variety of tools from geometric analysis and spectral theory, we show that for the geometric generators of $\Theta_{V,\varepsilon,\delta}$ one can provide explicit upper bounds for the grading of any irreducible solution relative to the reducible one. This uses the interpretation via Atiyah-Patodi-Singer index theory \cite{APS3} of the relative grading in terms of the spectral flow of a family of operators, which can be bounded by making explicit certain quantities that arise in the classical proof of compactness of the moduli space of solutions to the Seiberg-Witten equations.
\item From this bound, one obtains restrictions on the monopole Floer homology of the generators of $\Theta_{V,\varepsilon,\delta}$. Using this, one obtains upper bounds on certain numerical homology cobordism invariants arising in $\Pin$-monopole Floer homology for all elements $\Theta_{V,\varepsilon,\delta}$. One then concludes because for the manifolds $Y_n$ such invariants are explicitly computed, and tend to infinity.
\end{enumerate}
In particular, there are many other families that we could choose for the statement of Theorem \ref{mainthm} instead of  $Y_n$.

\vspace{0.3cm}

\textbf{Other coefficients.}
Part (1) of the strategy outlined above is valid more generally for any rational homology sphere, so we can use this approach to study homology cobordism with different coefficients.
\par
Notice that because we are using $\Pin$-monopole Floer homology, the exact same strategy outlined in part (2) above allows to prove the analogue of Theorem \ref{mainthm} (with the same constant $\mathfrak{n}_{V,\varepsilon,\delta}$) for the group $\Theta^{\mathbb{Z}/2\mathbb{Z}}_3$ of $\mathbb{Z}/2\mathbb{Z}$-homology spheres up to $\mathbb{Z}/2\mathbb{Z}$-homology cobordism.
\par
Furthermore, combining part (1) with the explicit estimates for the degree of the reducible solutions in \cite{LL2} (under the assumption $\mathrm{vol}(Y)\leq V$ and $\mathrm{inj}\geq\varepsilon$) allows us to provide explicit upper bounds on the Fr\o yshov invariants $h$ of rational homology spheres $Y$ in terms of bounds on $\mathrm{vol}(Y), \mathrm{inj}(Y)$ and $\lambda_1^*(Y)$. More precisely, we have the following.
\begin{thm}\label{secondthm}
Given $V,\varepsilon,\delta>0$, there exists an \textbf{explicitly computable} constant $\mathfrak{m}_{V,\varepsilon,\delta}$ such that for all hyperbolic rational homology spheres $Y$ with $\mathrm{vol}(Y)\leq V$, $\mathrm{inj}(Y)\geq\varepsilon$, and $\lambda_1^*(Y)\geq\delta$, the inequality
\begin{equation*}
|h(Y,\spin)|\leq \mathfrak{m}_{V,\varepsilon,\delta}
\end{equation*}
holds for all spin$^c$ structures $\spin$ on $Y$.
\end{thm}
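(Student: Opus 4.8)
The plan is to feed two explicit inputs into the structure of the monopole Floer chain complex of a rational homology sphere: the relative grading bound on irreducible Seiberg--Witten solutions from Part (1) of the strategy above, and the estimate for the grading of the reducible from \cite{LL2}. Recall that for a rational homology sphere $(Y,\spin)$ the blown-up Chern--Simons--Dirac functional has, for a generic perturbation, finitely many irreducible critical points together with two families (boundary-stable and boundary-unstable) of reducible critical points coming from the single reducible; the gradings of the reducibles form two arithmetic progressions of step $2$ whose ``base'' is the grading $\mu(Y,\spin)$ of the reducible solution. The group $\HMb(Y,\spin)$ is computed from the reducibles alone and is the standard $\Rin$-module supported in gradings $\equiv\mu(Y,\spin)\pmod 2$, while the Fr\o yshov invariant $h(Y,\spin)$ is, up to the usual normalization, half the grading at which the tower inside $\HMf(Y,\spin)$ --- equivalently the image of $\HMb(Y,\spin)$ in the surgery exact triangle relating $\HMf$, $\HMt$ and $\HMb$ --- stabilizes to the standard tower.

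First, the estimates of \cite{LL2} for the eta-invariant combination entering the degree of the reducible give, under the hypotheses $\mathrm{vol}(Y)\le V$ and $\mathrm{inj}(Y)\ge\varepsilon$, an explicit bound $|\mu(Y,\spin)|\le R_{V,\varepsilon}$. Second, by Part (1) --- interpreting the relative grading as the spectral flow of the associated family of operators and bounding that flow through the explicit compactness estimates for the Seiberg--Witten moduli space, now also using $\lambda_1^*(Y)\ge\delta$ --- every irreducible critical point has grading within an explicitly bounded distance $C_{V,\varepsilon,\delta}$ of $\mu(Y,\spin)$. Combining the two, all irreducible critical points lie in the grading window $[-R_{V,\varepsilon}-C_{V,\varepsilon,\delta},\, R_{V,\varepsilon}+C_{V,\varepsilon,\delta}]$.

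Outside this window the Floer chain complex consists only of the reducible tower and its internal differential, so the reduced Floer group is supported inside the window and, in sufficiently negative gradings, $\HMf(Y,\spin)$ agrees with the standard tower based at $\mu(Y,\spin)$; tracking the normalization in the definition of $h$ this forces $|h(Y,\spin)|\le \tfrac12|\mu(Y,\spin)|+C_{V,\varepsilon,\delta}+c$ for a universal constant $c$. Hence the theorem holds with $\mathfrak{m}_{V,\varepsilon,\delta}:=\tfrac12 R_{V,\varepsilon}+C_{V,\varepsilon,\delta}+c$, which is explicitly computable since $R_{V,\varepsilon}$ and $C_{V,\varepsilon,\delta}$ are. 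I expect the last step to be the only delicate point: one must extract $h$ --- defined through a min/max over an $\Rin$-submodule appearing in the exact triangle --- from the chain-level grading bounds, which requires careful bookkeeping of which reducibles are boundary-stable versus boundary-unstable, of the directions in which the two towers extend, and of the factor $2$ and the sign in the definition of $h$. The inputs from \cite{LL2} and from Part (1) enter purely as black boxes, so all the remaining work is this bookkeeping together with assembling the explicit constant.
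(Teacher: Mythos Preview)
Your proposal is correct and follows essentially the same approach as the paper. The paper packages the ``careful bookkeeping'' you anticipate in the final step as Proposition~\ref{froybound}, which gives the clean inequality $|h(Y,\spin)|\leq \tfrac{1}{2}\bigl(|\mathrm{gr}^{\mathbb{Q}}([B_{\bullet}])|+\max|\mathsf{sf}|+1\bigr)$; the two black-box inputs you name---the eta-invariant bound on the absolute grading of the reducible from \cite{LL2} and the spectral flow bound on irreducibles from Part~(1) (made precise as Proposition~\ref{mainbound})---are exactly those the paper uses.
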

Using this, we can address the analogue of Question \ref{quest1} for the rational homology cobordism group $\Theta^{\mathbb{Q}}_3$ in terms of extra spectral geometry data. For example, if $N> \mathfrak{m}_{V,\varepsilon,\delta}$, then the connected sum of $N$ copies of the Poincar\'e homology sphere, which has $|h|=N$, is not rationally homology cobordant to any rational homology sphere $Y$ with $\mathrm{vol}(Y)\leq V$, $\mathrm{inj}(Y)\geq\varepsilon$, and $\lambda_1^*(Y)\geq\delta$.

\vspace{0.3cm}

\textbf{Structure of the paper. }In Section \ref{SWreview} we discuss the relevant background in Seiberg-Witten theory and monopole Floer homology needed for our purposes. In Section \ref{Pin2review} we discuss the homology cobordism invariants coming $\Pin$-monopole Floer homology that will be used to show that $Y_n$ does not belong to $\Theta_{V,\varepsilon,\delta}$. In Section \ref{sobolev} we review some results in the literature regarding the Sobolev embedding theorem relevant to our strategy. In Section \ref{explicitellip} we provide explicit bounds on a coexact $1$-form in terms of its exterior differential and the geometry of the underlying hyperbolic manifold. In Section \ref{functional}, we discuss some abstract operator theory that allows us to provide explicit bounds on the spectral flow between the reducible solution and the irreducible ones. In Section \ref{spectralbound} we provide explicit bounds on the perturbation in our case of interest. Finally, in Section \ref{mainproof} we recall some results about spectral densities and put the various pieces together to prove our main results.

\vspace{0.3cm}

\textbf{Acknowledgements. }The author was partially supported by the Alfred P. Sloan Foundation and NSF grant DMS-2203498.

\vspace{0.5cm}
\section{Some background in Seiberg-Witten theory}\label{SWreview}
In this section we review some fundamental concepts in monopole Floer homology, namely the proof of compactness and how gradings in Floer chain complex can be interpreted in terms of spectral flow.
We refer the reader to \cite{KM} for the definitive reference, and \cite{Lin5} for a friendly introduction.
\\
\par
\textit{The compactness argument.} Fix a spin$^c$ structure $\spin$ on $Y$. Recall that the Seiberg-Witten equations for an element $(B,\Psi)$ in the configuration space $\mathcal{C}=\mathcal{C}(Y,\spin)$ are
\begin{align*}[left=\empheqlbrace]
\frac{1}{2}\ast F_{B^t}+\rho^{-1}(\Psi\Psi^*)_0&=0\\ \label{SWfirst}
D_B\Psi&=0
\end{align*}
where $B^t$ is the connection induced on $\det S$. The left hand side is $\mathrm{grad} \mathcal{L}$, where $\mathcal{L}$ is the Chern-Simons-Dirac functional.
\begin{remark}\label{trans}
Of course, one needs to add perturbations in order to achieve transversality of the equations; on the other hand, it is straightforward to check that the arguments throughout the paper hold after choosing a sufficiently small perturbation. Given this, to keep the paper readable, we will always work with the case of the unperturbed equations, and assume that they are transversely cut out. Our key estimates on the spectral flow (and spectral densities in general) will deliberately not be sharp hence readily checked to hold also after small perturbations of the equations.
\end{remark}
 We will always be interested in the case of $b_1(Y)=0$, so that $\spin$ is a torsion spin$^c$ structure. We can then write $B=B_{\bullet}+b$ for a reference smooth connection $B_{\bullet}$ with $B_{\bullet}^t$ flat and $b$ is an imaginary valued $1$-form. The system of equations can then be expanded as
\begin{align}[left=\empheqlbrace]
\ast db+\rho^{-1}(\Psi\Psi^*)_0&=0\\ \label{SWfirst}
D_{B_{\bullet}}\Psi+\rho(b)\Psi&=0.
\end{align}
After a gauge transformation, we can make any configuration $(B,\Psi)$ to lie in the Coulomb gauge with respect to the reducible solution $(B_{\bullet},0)$, i.e. we can assume $d^*b=0$.
\\
\par
Let us now recall from \cite[Chapter 5]{KM} how one obtains a priori $L_1^2$ bounds on $(B,\Psi)$ on solutions in Coulomb gauge to the equations which are only assumed to be $L^2_1$ to begin with (any such solution turns out to be smooth, as recalled below). Our goal later will be to make (parts of) the estimates \textit{explicit}. The steps are the following.
\begin{enumerate}
\item an argument involving the Weitzenb\"ock formula and integration by parts implies that for any solution we obtain an a priori $L^4$ bound on $\Psi$, hence an $L^2$ bound on $(\Psi\Psi^*)_0$, in terms of the geometry of $Y$.
\item By G\r{a}rding inequality, there exists a constant $C>0$ such that
\begin{equation*}
\| b\|_{L^2_1}\leq C\|* db\|_{L^2},
\end{equation*}
where we used that $\ast d$ has no kernel acting on coclosed forms as $b_1(Y)=0$. In particular, the first equation this implies then that we have $L^2_1$ bounds on $b$. By the Sobolev embedding theorem, in dimension three this gives us $L^6$ bounds, hence a fortiori $L^4$ bounds, on $b$.
\item We then get by H\"older inequality an $L^2$ bound on $\rho(b)\Psi$, hence again by elliptic regularity the second equation implies an $L^2_1$ bound on $\Psi$.
\end{enumerate}
Notice that while for some points of the argument it is clear how to make the constants explicits in terms of the geometry of $Y$, other parts are not as clear (e.g. the constant in the Sobolev embedding $L^2_1\hookrightarrow L^6$). From the $L^2_1$ bound we get directly the existence of a subsequence strongly converging in $L^2$; to obtain compactness of the moduli space in stronger topologies one needs an extra argument which also proves that any $L^2_1$ solution $(B,\Psi)$ in the Coulomb gauge above is automatically smooth. Finally an argument involving the Weitzenb\"ock formula implies the $L^{\infty}$ bound
\begin{equation}\label{apriori}
|\Psi|^2\leq \max\{0,-\inf s/2\},
\end{equation}
see \cite[Section $4.6$]{KM}, which is a significant improvement on the $L^4$ bound we started with.
\\
\par
\textit{Extended Hessians.} Suppose now $(B_0,\Psi_0)$ is a solution to the equations (with $B_0=B_{\bullet}+b_0$), and let us study the linearized equations, i.e. compute $\mathrm{Hess}\mathcal{L}_{(B_0,\Psi_0)}$ as an operator on of the tangent space to the configuration space
\begin{equation*}
T_{(B_0,\Psi_0)}\mathcal{C}=i\Omega^1\oplus \Gamma(S).
\end{equation*}
For $(b,\Psi)$ a tangent vector to the Coulomb slice,
this sends
\begin{equation*}
\begin{bmatrix}
{b}\\ {\Psi}
\end{bmatrix}
\mapsto
\begin{bmatrix}
\ast d{b}+\rho^{-1}({\Psi}\Psi_0^*+\Psi_0{\Psi}^*)_0\\
D_{B_{\bullet}}{\Psi}+\rho({b})\Psi_0+\rho(b_0){\Psi}
\end{bmatrix}.
\end{equation*}
In particular, the Hessian at the reducible solution $(B_{\bullet},0)$ is simply
\begin{equation*}
\mathrm{Hess}\mathcal{L}_{(B_{\bullet},0)}=\ast d\oplus D_{B_{\bullet}},
\end{equation*}
and the Hessian at another configuration is then obtained by a suitable perturbation:
\begin{equation*}
\mathrm{Hess}\mathcal{L}_{(B_0,\Psi_0)}=\mathrm{Hess}\mathcal{L}_{(B_{\bullet},0)}+
\begin{bmatrix}
0 & \rho^{-1}(\cdot\Psi_0^*+\Psi_0\cdot^*)_0\\
\rho(\cdot)\Psi_0 & \rho(b_0)\cdot
\end{bmatrix}.
\end{equation*}
These Hessians are not the right operators to study from the point of view of Floer theory: for example, because of gauge invariance they always have infinite dimensional kernel. This can be solved by taking into account the linearization of the gauge group action as follows. Consider the operator
\begin{align*}
\dd_{(B_0,\Psi_0)}&: i\Omega^0\rightarrow i\Omega^1\oplus \Gamma(S)\\
\xi&\mapsto(-d\xi,\xi\Psi_0),
\end{align*}
which is the linearization of the gauge group action at $(B_0,\Psi_0)$. Its formal adjoint is given by
\begin{align*}
\dd_{(B_0,\Psi_0)}^*&:  i\Omega^1\oplus \Gamma(S)\rightarrow i\Omega^0\\
(b,\Psi)&\mapsto -d^*b+i\mathrm{Re}\langle i\Psi_0,\Psi\rangle.
\end{align*}
The extended Hessian at $(B_0,\Psi_0)$ \cite[Section 12.3]{KM} is then the operator on $i\Omega^0\oplus T\mathcal{C}$ given in block matrix form as
\begin{equation*}
\EHess\mathcal{L}_{(B_0,\Psi_0)}=\begin{bmatrix}
0 & \dd_{(B_0,\Psi_0)}^* \\
\dd_{(B_0,\Psi_0)} & \mathrm{Hess}\mathcal{L}_{(B_0,\Psi_0)}
\end{bmatrix};
\end{equation*}
it extends to a bounded operator from $L^2_1$ to $L^2$. This operator is first-order elliptic self-adjoint hence diagonalizable with discrete spectrum unbounded in both directions. 
More explicitly, we have as operators on $i\Omega^0\oplus i\Omega^1\oplus \Gamma(S) $ that
\begin{equation*}
\EHess\mathcal{L}_{(B_{\bullet},0)}=\begin{bmatrix}
0 & -d^*&0 \\
-d&\ast d&0\\
0&0&D_{B_{\bullet}}
\end{bmatrix}.
\end{equation*}
At another configuration $(B_0,\Psi_0)$ we have that
\begin{equation*}
\EHess\mathcal{L}_{(B_0,\Psi_0)}=\EHess\mathcal{L}_{(B_{\bullet},0)}+\begin{bmatrix}
0 & 0 &i\mathrm{Re}\langle i\Psi_0,\cdot\rangle \\
0&0 & \rho^{-1}(\cdot\Psi_0^*+\Psi_0\cdot^*)_0\\
\cdot \Psi_0& \rho(\cdot)\Psi_0 & \rho(b_0)\cdot
\end{bmatrix}.
\end{equation*}
The perturbation is a multiplication operator; because we have an $L^{\infty}$ bound on $\Psi$, the corresponding multiplication map is bounded from $L^2$ to $L^2$. On the other hand, the argument we outlined above does not guarantee an $L^{\infty}$ bound on $b$, but merely an $L^6$ bound (to be made explicit); via the Sobolev embedding theorem, this implies that the corresponding multiplication operator is bounded from $L^2_1$ to $L^2$.

\begin{remark}
The usual proof of the compactness of the Seiberg-Witten moduli space in fact gives $L^{\infty}$ bound on $b$; the problem is that the stronger bound is hard to make explicit, as it involves estimates in $L^p$ Sobolev spaces whose constants are hard to directly relate to the geometry of $Y$.
\end{remark}

\textit{The Floer chain complex.} Recall that (under suitable transversality assumptions, cf. Remark \ref{trans}), including that the spectrum of $D_{B_{\bullet}}$ is simple and does not include zero) the Floer chain complex computing $\HMt_*(Y,\spin)$ for a rational homology sphere consists of:
\begin{itemize}
\item one generator for each gauge equivalence class of irreducible solutions;
\item the gauge equivalence class of the reducible solution $[B_{\bullet},0]$ contributes an infinite tower of generators, one for each eigenspace of $D_{B_{\bullet}}$ with positive eigenvalue; we will denote by $[\mathfrak{b}_i]$ for $i\geq 0$ the critical point corresponding to the $i$th positive eigenvalue. 
\end{itemize}
In our case of interest $\spin$ is torsion, and the chain complex admits a natural relative $\mathbb{Z}$-grading. We have that for the reducible generators
\begin{equation*}
\mathrm{gr}([\mathfrak{b}_i],[\mathfrak{b}_j])=2(i-j)\text{ for all }i,j\in\mathbb{N}.
\end{equation*}
To describe the other relative gradings, recall that for a family of suitable operators (such as the extended Hessians $\EHess\mathcal{L}$ that we consider presently), $\{L_t\}$ with $t\in[0,1]$ such that $0$ does not belong to the spectrum of $L_0,L_1$, the \textit{spectral flow} is the number of eigenvalues that goes from negative to positive (counted with signs); this in general depends on the choice of path and not just on the endpoints. In our situation, we consider the family of extended Hessians $\EHess\mathcal{L}$, and the spectral flow between $\EHess\mathcal{L}_{(B,\Psi)}$ and $\EHess\mathcal{L}_{(B',\Psi')}$ (which under the assumption of transversality do not have zero in the spectrum), computed via a path of extended Hessians $\EHess\mathcal{L}_{(B_t,\Psi_t)}$ for configurations $(B_t,\Psi_t)$, is independent of the choice of path (this is not true if $\spin$ is not torsion). We then have
\begin{equation}\label{sfred}
\mathrm{gr}([\mathfrak{b}_0],[B,\Psi])=\mathsf{sf}\left(\EHess\mathcal{L}_{(B_{\bullet},0)},\EHess\mathcal{L}_{(B,\Psi)}\right),
\end{equation}
see \cite[Section 14.4]{KM}, where by $\mathsf{sf}$ we will always implicitly mean the spectral flow along a family of extended Hessians $\{\EHess\mathcal{L}_{(B_t,\Psi_t)}\}$.
\begin{remark}
This is the Floer theoretic analogue of the fact in Morse theory that the expected dimension of the space of parametrized trajectories between two critical points is the difference of the indices, and is a consequence of Atiyah-Singer-Patodi index theory \cite{APS3}.
\end{remark}
In our situation, we can consider the path of configurations for $t\in[0,1]$ given by $(B_{\bullet}+tb_0, t\Psi_0)$ for which the family of extended Hessians
\begin{equation}\label{exthess}
\EHess\mathcal{L}_{(B_{\bullet}+tb_0, t\Psi_0)}=\EHess\mathcal{L}_{(B_{\bullet},0)}+t\begin{bmatrix}
0 & 0 &i\mathrm{Re}\langle i\Psi_0,\cdot\rangle \\
0&0 & \rho^{-1}(\cdot\Psi_0^*+\Psi_0\cdot^*)_0\\
\cdot \Psi_0& \rho(\cdot)\Psi_0 & \rho(b_0)\cdot
\end{bmatrix}
\end{equation}
is linear in $t$. A main goal of the paper will be to provide an explicit bounds for the spectral flow of the family $\EHess\mathcal{L}_{(B_{\bullet}+tb_0, t\Psi_0)}$.
\begin{remark} By elliptic regularity, the spectral flow is independent of the choice of Sobolev completion; we will consider the family as acting between $L^2_1$ to $L^2$ as this is the case directly accessible from the geometry of the three-manifold.
\end{remark}

\vspace{0.3cm}
\textit{Floer homology and spectral flow.} We conclude this section by discussing the significance of such bound on the spectral flow for the structure of Floer homology. We will for with coefficients in $\ztwo=\mathbb{Z}/2\mathbb{Z}$. Recall that for a spin$^c$ rational homology sphere $(Y,\spin)$ we have the isomorphism of $\ztwo[U]$-modules
\begin{equation*}
\HMt_{*}(Y,\spin)=\left(\ztwo[U^{-1},U]/U\ztwo[U]\right)\bigoplus\left(\oplus\ztwo[U]/U^{n_i}\right)
\end{equation*}
for some collection of $n_i\geq 1$. The first summand is commonly referred to as the $U$-tower, and is the image of $\HMb_{*}(Y,\spin)$ under $i_*$.
\begin{defn}In the situation above, the $U$-\textit{torsion width} of $(Y,\spin)$, denoted by $t(Y,\spin)$ is defined as the maximum of the $n_i$.
\end{defn}

\begin{prop}\label{torsbound}
The torsion number $t(Y,\spin)$ satisfies the inequality
\begin{equation*}
t(Y,\spin)\leq2\max|\mathsf{sf}\left(\EHess\mathcal{L}_{(B_{\bullet},0)},\EHess_{\mathcal{L}(B,\Psi)}\right)|+2
\end{equation*}
where the maximum is taken over all irreducible solutions $[B,\Psi]$ of the Seiberg-Witten equations.
\end{prop}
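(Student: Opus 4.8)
The plan is to convert the spectral-flow bound into a statement about where the generators of the monopole Floer chain complex sit in grading, and then read the $U$-torsion width off the resulting picture. Write $S=\max\bigl|\mathsf{sf}\bigl(\EHess\mathcal{L}_{(B_{\bullet},0)},\EHess\mathcal{L}_{(B,\Psi)}\bigr)\bigr|$, the maximum over all irreducible critical points, and set $g_0=\mathrm{gr}([\mathfrak{b}_0])$. By \eqref{sfred}, $S$ is exactly the maximum of $\bigl|g_0-\mathrm{gr}([B,\Psi])\bigr|$ over irreducibles, so every irreducible generator of the Floer complex lies in the grading window $[\,g_0-S,\ g_0+S\,]$. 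The reducible generators $[\mathfrak{b}_i]$ occupy the gradings $g_0+2i$, $i\geq 0$, and I will use two elementary facts about them: there is no differential among them, since they all have the same grading parity while the differential drops grading by one (equivalently, with simple Dirac spectrum and no kernel the reducible blow-up contributes the standard $\ztwo[U]$-tower with vanishing internal differential); and $U[\mathfrak{b}_{i+1}]=[\mathfrak{b}_i]$.

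With this in hand I would analyze the two ends of the complex. Below grading $g_0-S$ there are no generators at all — the irreducibles stop there and the reducibles start at $g_0$ — so $\HMt_*(Y,\spin)$ vanishes in those degrees. Above grading $g_0+S$ the only generators are the reducibles $[\mathfrak{b}_i]$; since there is no differential among them and, for parity reasons, nothing enters or leaves that range, the homology there is $\ztwo$ in each grading of the correct parity and $0$ otherwise, which is exactly the $U$-tower $\ztwo[U^{-1},U]/U\ztwo[U]$ — each class $[\mathfrak{b}_i]$ appearing there is infinitely $U$-divisible via $U^k[\mathfrak{b}_{i+k}]=[\mathfrak{b}_i]$, hence belongs to the tower summand and not to any $\ztwo[U]/U^{n_i}$. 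It follows that the reduced summand $\bigoplus\ztwo[U]/U^{n_i}$ is supported in a finite window of at most $2S+2$ consecutive gradings.

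The conclusion is then immediate: each summand $\ztwo[U]/U^{n_i}$ contributes a nonzero group in $n_i$ distinct gradings, all inside that window, so $n_i\leq 2S+2$ for every $i$, i.e. $t(Y,\spin)=\max_i n_i\leq 2S+2$, which is the claimed bound. There is no serious obstacle here once \eqref{sfred} is granted; the only point requiring a little care is the bookkeeping at the boundary of the window — ruling out a torsion class at the top boundary grading, where a reducible generator might fail to be a cycle because its differential could land on an irreducible of grading $g_0+S$ — which is handled by combining the $\ztwo[U]$-module structure with the identification of the $U$-tower as the image of $i_*\colon\HMb_*(Y,\spin)\to\HMt_*(Y,\spin)$. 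As is typical in this paper the estimate is deliberately wasteful — a parity refinement already gives $S+1$ — but the stated bound is all that will be used later.
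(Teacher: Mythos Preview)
Your proof is correct and pivots on the same key fact as the paper's: a torsion class cannot be represented by a cycle consisting solely of reducible generators, since such a cycle would lie in the $U$-tower (the image of $i_*$). The paper's organization is a bit more direct: rather than first delimiting a global grading window for all torsion, it fixes a single summand $\ztwo[U]/U^{n_i}$, notes that cycles representing its top and bottom elements must each contain an irreducible generator, and immediately obtains $2n_i-2=\mathrm{gr}(x_+,x_-)\leq|\mathrm{gr}(x_+,[\mathfrak{b}_0])|+|\mathrm{gr}([\mathfrak{b}_0],x_-)|\leq 2S$ --- this already yields the sharper $t\leq S+1$ you allude to, without any boundary or parity bookkeeping.
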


\begin{proof}
For a summand $\ztwo[U]/U^{n_i}$ consider its top and bottom elements; these have a difference in grading of $2n_i-2$. Denote by $x_{\pm}$ cycles representing these two elements; these are not represented by reducible generators, as otherwise they would belong to the $U$-tower (cf. \cite[Section 22.2]{KM}). Then
\begin{equation*}
2n_i-2=\mathrm{gr}(x_+,x_-)=\mathrm{gr}(x_+,[\mathfrak{b}_0])+\mathrm{gr}([\mathfrak{b}_0],x_-)\leq|\mathrm{gr}(x_+,[\mathfrak{b}_0])|+|\mathrm{gr}([\mathfrak{b}_0],x_-)|
\end{equation*}
and the result follows from (\ref{sfred}).
\end{proof}

Finally, recall that because the spin$^c$ structure is torsion, there is a natural absolute $\mathbb{Q}$-grading $\mathrm{gr}^{\mathbb{Q}}$ on Floer homology \cite[Section 28.3]{KM}. Denoting by $d$ the absolute $\mathbb{Q}$-grading of the bottom of the $U$-tower of $\HMt_{\bullet}(Y,\spin)$, the Fr\o yshov invariant is defined to be $h(Y,\spin)=-d/2$ \cite[Chapter 39]{KM}. This is an important quantity that provides information about the negative definite manifolds bounded by $(Y,\spin)$. We have the following.
\begin{prop}\label{froybound}
The Fr\o yshov invariant $h(Y,\spin)$ satisfies the inequality
\begin{equation*}
|h(Y,\spin)|\leq \frac{1}{2}\left(|gr^{\mathbb{Q}}([B_{\bullet}])|+\max|\mathsf{sf}(\EHess\mathcal{L}_{(B_{\bullet},0)},\EHess_{\mathcal{L}(B,\Psi)})|+1\right).
\end{equation*}
\end{prop}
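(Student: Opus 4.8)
The plan is to bound the absolute $\mathbb{Q}$-grading $d$ of the bottom of the $U$-tower of $\HMt_{\bullet}(Y,\spin)$, since $h(Y,\spin)=-d/2$ by definition. I would compare $d$ with $g_0:=\mathrm{gr}^{\mathbb{Q}}([\mathfrak{b}_0])$, the absolute grading of the lowest reducible generator, which is the quantity $gr^{\mathbb{Q}}([B_\bullet])$ appearing in the statement. Because $\mathrm{gr}([\mathfrak{b}_i],[\mathfrak{b}_j])=2(i-j)$, the reducible generators occur exactly in the gradings $g_0,g_0+2,g_0+4,\dots$, contributing one dimension over $\ztwo$ in each of those; in every grading not of this form the complex is generated by irreducible solutions only. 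Using the triangle inequality $|h|=|d|/2\le\tfrac12\big(|g_0|+|d-g_0|\big)$, the proposition is reduced to the estimate
\[
|d-g_0|\ \le\ \max\big|\mathsf{sf}\big(\EHess\mathcal{L}_{(B_\bullet,0)},\EHess\mathcal{L}_{(B,\Psi)}\big)\big|+1,
\]
the maximum being over the irreducible solutions, and I would prove this in the spirit of Proposition~\ref{torsbound}.

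To prove the estimate, fix a cycle $x_-$ representing the bottom of the $U$-tower; it has grading $d$. Either grading $d$ contains an irreducible generator, or it does not. In the first case, let $[B,\Psi]$ be such a generator; then $\mathrm{gr}([\mathfrak{b}_0],[B,\Psi])=g_0-d$, so \eqref{sfred} gives $|d-g_0|=|\mathsf{sf}(\EHess\mathcal{L}_{(B_\bullet,0)},\EHess\mathcal{L}_{(B,\Psi)})|\le\max|\mathsf{sf}|$, which is even stronger than needed. In the second case $x_-$ is a nonzero chain supported in grading $d$, so that grading must contain a reducible generator; hence $d=g_0+2k$ for some $k\ge0$ and $x_-=[\mathfrak{b}_k]$. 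If $k=0$ then $|d-g_0|=0$. If $k\ge1$, then $[x_-]$ being the bottom of the tower forces $U\cdot[x_-]=0$, so the cycle $U[\mathfrak{b}_k]$ is a boundary $\check\partial e$; its reducible component is $[\mathfrak{b}_{k-1}]$, because the chain-level $U$-action carries $[\mathfrak{b}_i]$ to $[\mathfrak{b}_{i-1}]$ up to terms supported on irreducibles, so $\check\partial e$ has nonzero reducible component and hence $e\ne0$. Since grading $d-1$ carries no reducible generators, $e$ is a nonzero combination of irreducible generators; for any such generator $[B,\Psi]$ appearing in $e$, \eqref{sfred} gives $|\mathsf{sf}(\EHess\mathcal{L}_{(B_\bullet,0)},\EHess\mathcal{L}_{(B,\Psi)})|=|g_0-(d-1)|=2k-1$, so $|d-g_0|=2k\le\max|\mathsf{sf}|+1$. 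This establishes the estimate, and with it the proposition.

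The step I expect to require the most care is not an analytic estimate but the bookkeeping of the structure of the monopole Floer complex of a rational homology sphere near the reducible locus: that $[\mathfrak{b}_0]$ is a cycle, that the reducible generators occupy precisely the gradings $g_0+2i$ with a one-dimensional contribution, and that the chain-level $U$-map sends $[\mathfrak{b}_i]$ to $[\mathfrak{b}_{i-1}]$ modulo terms supported on irreducibles. These facts are standard (\cite[Chapters~22 and~25]{KM}), but the whole argument hinges on the parity observation that forces the chain $e$ above to be purely irreducible, so I would want to spell them out carefully. Granting them, the rest is the short grading computation above, and the genuinely new ingredient — bounding the displacement of the $d$-invariant from the reducible grading by the spectral flow of the irreducible solutions — is exactly parallel to the bound on the $U$-torsion width in Proposition~\ref{torsbound}.
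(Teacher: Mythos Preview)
Your argument is correct and reaches the stated bound, but it is organized differently from the paper's proof. The paper argues ``from above'': it observes that every irreducible generator has $\mathbb{Q}$-grading in $[m,M]$ with $M=g_0+\max\mathsf{sf}$ and $m=g_0+\min\mathsf{sf}$, and then shows that the first boundary-stable generator $[\mathfrak{b}_i]$ with degree at least $M$ is a cycle (using that the $C^s\to C^o$ component of $\check\partial$ factors through boundary-unstable critical points, which are absent in the relevant degree) and is not a boundary (nothing sits one degree above it), hence represents a nonzero class in the $U$-tower; this gives the upper bound on $d$, while the lower bound comes for free since there are no generators at all below $\min(g_0,m)$. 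You instead argue ``from below'': you look directly at a cycle $x_-$ representing the bottom of the tower and, when it happens to be a pure reducible $[\mathfrak{b}_k]$ with $k\geq1$, use the vanishing of $U\cdot[x_-]$ in homology together with the parity of the reducible gradings to produce an irreducible in degree $d-1$.

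The two routes trade one standard structural fact for another: the paper uses the factorization of $\check\partial|_{C^s}$ through $C^u$, while you use that the $C^s$-component of the chain-level $U$-map on reducibles is the shift $[\mathfrak{b}_i]\mapsto[\mathfrak{b}_{i-1}]$ (which, as you note, is the point to spell out carefully from \cite[Ch.~25]{KM}, and which holds for degree reasons once one knows $U$ has degree $-2$). A small bonus of your approach is that the constant $+1$ in the statement drops out cleanly; the paper's argument, read literally with ``degree strictly larger than $M$'', would give $+2$ when $\max\mathsf{sf}$ is even, though this is immaterial for the applications and is fixed by taking ``degree $\geq M$'' instead.
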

\begin{proof}
For every irreducible critical point $x=[B,\Psi]$, we have
\begin{equation*}
\mathrm{gr}^{\mathbb{Q}}(x)=\mathrm{gr}^{\mathbb{Q}}([B_{\bullet}])+\mathrm{gr}(x,[B_{\bullet}])=\mathrm{gr}^{\mathbb{Q}}([B_{\bullet}])+\mathsf{sf}(\EHess\mathcal{L}_{(B_{\bullet},0)},\EHess_{\mathcal{L}(B,\Psi)}).
\end{equation*}
Hence, the stable reducible critical point $[\mathfrak{b}_i]$ with minimum degree larger than
\begin{equation*}
M=\mathrm{gr}^{\mathbb{Q}}([B_{\bullet}])+\max\mathsf{sf}(\EHess\mathcal{L}_{(B_{\bullet},0)},\EHess_{\mathcal{L}(B,\Psi)})
\end{equation*}
is not the differential of any irreducible element for degree reasons, and is a cycle as differentials from boundary stable to irreducible critical points involve two-step trajectories involving boundary unstable critical points, which are also not present for degree reasons. Hence $[\mathfrak{b}_i]$ determines a non-zero class in the $U$-tower. Finally, there are no generators in the Floer chain complex in degrees less than
\begin{equation*}
m=\mathrm{gr}^{\mathbb{Q}}([B_{\bullet}])+\min\mathsf{sf}(\EHess\mathcal{L}_{(B_{\bullet},0)},\EHess_{\mathcal{L}(B,\Psi)})
\end{equation*}
and the result is proved.
\end{proof}

\vspace{0.5cm}
\section{Obstructions from $\Pin$-monopole Floer homology}\label{Pin2review}

We discuss our key tool to relate geometric spectral flow estimates to homology cobordism obstructions, namely $\Pin$-monopole Floer homology. This was introduced in \cite{Lin} as a Morse-Bott theoretic analogue of Manolescu's Floer-homotopic construction \cite{Man}. Let us briefly review the definitions and formal properties relevant for our purposes.
\\
\par
To each self-conjugate spin$^c$ structure $\spin$, i.e. $\spin=\bar{\spin}$, on a closed oriented three-manifold $Y$ we associate in \cite{Lin} the $\Pin$-\textit{monopole Floer homology groups}, which fit in the long exact sequence
\begin{equation}\label{pin2}
\cdots\stackrel{i_*}{\longrightarrow}\HSt_{*}(Y,\spin)\stackrel{j_*}{\longrightarrow} \HSf_{*}(Y,\spin)\stackrel{p_*}{\longrightarrow} \HSb_{*}(Y,\spin)\stackrel{i_*}{\longrightarrow}\cdots
\end{equation}
and are called respectively \textit{HS-to}, \textit{HS-from} and \textit{HS-bar}. These groups carry an absolute $\mathbb{Q}$-grading, and are graded modules over the ring
\begin{equation*}
\Rin= \ztwo[V,Q]/(Q^3)
\end{equation*}
where the actions of $V$ and $Q$ have degree respectively $-4$ and $-1$.
\par
We will be interested in the case if which $b_1(Y)=0$; in this case a self-conjugate spin$^c$ structure is the same as a spin structure. For any rational number $d$ let $\V_d$ and  $\V^+_d$ be the graded $\ztwo [V]$-modules $\ztwo[V^{-1},V]$ (the ring of Laurent power series) and $\ztwo[V^{-1},V]/V\ztwo [V]$ where the grading is shifted so that the element $1$ has degree $d$. We have the identifications as absolutely graded $\Rin$-modules up to an overall shift:
\begin{equation*}
\HSb_{*}(Y)\cong \V_2\oplus \V_1\oplus \V_0.
\end{equation*}
where action of $Q$ is an isomorphism from the first tower onto the second tower and from the second tower onto the third (and zero otherwise). The group $\HSt_{*}(Y,\spin)$ vanishes in degrees low enough, and the map $i_*$ is an isomorphism in degrees high enough. Hence $i_*\left(\HSb_{*}(Y,\spin)\right)$, considered as an $\ztwo[V]$-module, decomposes as the direct sum
\begin{equation*}
\V^+_c\oplus \V^+_b\oplus \V^+_a.
\end{equation*}
We call these three summands respectively the $\gamma$, $\beta$ and $\alpha$-towers. The action of $Q$ sends the $\gamma$-tower onto the $\beta$-tower and the $\beta$-tower onto the $\alpha$-tower. Manolescu's correction terms are then defined to be numbers
\begin{equation*}
\alpha(Y)\geq\beta(Y)\geq\gamma(Y)
\end{equation*}
such that
\begin{equation*}
a=2\alpha(Y),\quad b= 2\beta(Y)+1, \quad c=2\gamma(Y)+2.
\end{equation*}
The inequalities between these quantities follow from the module structure and in particular the $Q$-action between the towers. The fundamental properties of these numerical invariants are the following:
\begin{enumerate}
\item they are invariant under spin (and in particular $\mathbb{Z}/2\mathbb{Z}$-)homology cobordism.
\item they reduce modulo $2$ to $-\mu(Y,\spin)$, the Rokhlin invariant;
\item under orientation reversal we have
\begin{align*}
\alpha(-Y,\spin)&=-\gamma(Y,\spin)\\
\beta(-Y,\spin)&=-\beta(Y,\spin)\\
\gamma(-Y,\spin)&=-\alpha(Y,\spin).
\end{align*}
\end{enumerate}
\begin{remark}Indeed, the existence of a map $\beta:\Theta\rightarrow\mathbb{Z}$ with these properties directly implies that the Rokhlin homomorphism $\mu:\Theta\rightarrow\mathbb{Z}/2\mathbb{Z}$ does not split, hence that the Triangulation conjecture is false.
\end{remark}

We will be interested in the following basic invariant.
\begin{defn}
The $\Pin$-\textit{width} of $(Y,\spin)$ is the quantity $w(Y,\spin)=\alpha(Y,\spin)-\gamma(Y,\spin)$.
\end{defn}
This quantity is in fact a non-negative even number because $\alpha$ and $\gamma$ have the same reduction modulo $2$; it is manifestly a homology cobordism invariant. For example, one can show that
\begin{equation}\label{widthexample}
w(\Sigma(2,8n-1,16n-1))=2n.
\end{equation}
either using the description of $ \Sigma(2,8n-1,16n-1)$ as $(-1)$-surgery on the alternating torus knot $T(2,8n-1)$ \cite{Lin4} or more directly using the Seifert fibration \cite{Sto}.
\\
\par
It is in general very challenging to explicitly compute the Manolescu correction terms of a given $(Y,\spin)$, or even to understand its behavior under connected sums \cite{Lin2}, \cite{Lin4}, \cite{Sto2}. Fortunately, in order to prove our main result we will need a much less refined understanding of these invariants. We begin by showing how to relate the $\Pin$-width to the $U$-torsion. In particular, we show the following.
\begin{prop}\label{widthestimate}
The inequality $w(Y,\spin)\leq 4t(Y,\spin)+4$ holds for all $(Y,\spin)$.
\end{prop}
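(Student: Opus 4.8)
I would deduce the bound from the relationship between $\Pin$-monopole Floer homology and ordinary monopole Floer homology, in the shape of the Gysin-type exact triangle of $\ztwo[V]$-modules underlying the passage from $S^1$- to $\Pin(2)$-equivariant theory, whose Euler class is $Q$:
\[
\cdots\longrightarrow \HMt_{*}(Y,\spin)\stackrel{\pi_*}{\longrightarrow}\HSt_{*}(Y,\spin)\stackrel{Q}{\longrightarrow}\HSt_{*-1}(Y,\spin)\stackrel{\delta}{\longrightarrow}\HMt_{*-1}(Y,\spin)\longrightarrow\cdots,
\]
where $V$ acts on $\HMt_{*}(Y,\spin)$ as $U^2$; this is the structural input taken from \cite{Lin}. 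I first fix notation: $\HMt_{*}(Y,\spin)=\mathcal{T}^{+}_{(d)}\oplus\bigl(\bigoplus_i \ztwo[U]/U^{n_i}\bigr)$, where the $U$-tower $\mathcal{T}^{+}_{(d)}$ has bottom grading $d=-2h(Y,\spin)$ and $t(Y,\spin)=\max_i n_i$; and, as $\ztwo[V]$-modules, $\HSt_{*}(Y,\spin)=\bigl(\V^{+}_a\oplus\V^{+}_b\oplus\V^{+}_c\bigr)\oplus S$ with $a=2\alpha$, $b=2\beta+1$, $c=2\gamma+2$ and $S$ the $V$-torsion submodule. Since $a-c=2w(Y,\spin)-2$, the goal is to bound $a-c$ from above linearly in $t(Y,\spin)$.

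The key lemma is that $\beta(Y)-\gamma(Y)\leq 2t(Y,\spin)+O(1)$. To prove it, note that $Q$ maps the $\gamma$-tower $\V^{+}_c$ $V$-linearly into the lower towers, so its bottom $\lambda\approx\tfrac12(\beta-\gamma)$ elements — those lying in gradings below $b$, up to the (bounded) contribution of $S$ — are annihilated and form a $V$-string $u_0,\dots,u_{\lambda-1}$ in $\ker Q=\operatorname{im}\pi_*$ with $Vu_i=u_{i-1}$ and all $u_i\neq 0$. Lifting along the $V$-equivariant, grading-preserving map $\pi_*$ produces a $V$-string $x_0,\dots,x_{\lambda-1}$ in $\HMt_{*}(Y,\spin)$ with $Vx_i=x_{i-1}$ and all $x_i\neq 0$ (as $\pi_*(x_i)=u_i\neq0$). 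Two rigidity observations now finish the lemma. First, $V$ acts $U$-linearly and hence preserves each summand of $\HMt_{*}(Y,\spin)$, so a $V$-string with no zero terms lies entirely in $\mathcal{T}^{+}_{(d)}$ or entirely in a single torsion summand $\ztwo[U]/U^{n_i}$. Second, this string cannot lie in $\mathcal{T}^{+}_{(d)}$: if $c<d$ the tower has no elements below grading $d$, while if $c\geq d$ one uses that $\pi_*$ carries $\mathcal{T}^{+}_{(d)}$ onto the $\alpha$-tower — this is the point where the triangle respects the canonical $i_*$-images — which meets the $\gamma$-tower only in bounded degree. Hence (up to a bounded initial segment) the string lies in one $\ztwo[U]/U^{n_i}$, and as a $U^2$-string of length $\approx\lambda$ occupies consecutive gradings spanning $\approx 4\lambda$, forcing $n_i\geq 2\lambda-O(1)=\beta-\gamma-O(1)$.

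Applying this lemma to $-Y$ and using $\gamma(-Y)=-\alpha(Y)$, $\beta(-Y)=-\beta(Y)$ and $t(-Y,\spin)=t(Y,\spin)$ gives $\alpha(Y)-\beta(Y)\leq 2t(Y,\spin)+O(1)$ as well; adding, $w(Y,\spin)=(\alpha-\beta)+(\beta-\gamma)\leq 4t(Y,\spin)+O(1)$. Carrying the (deliberately crude) constants through — the roundings in $\lambda$, the grading offsets among the three towers, and above all the amount by which the torsion submodule $S$ of $\HSt_{*}(Y,\spin)$ shortens the kernel $V$-string — yields the stated inequality $w(Y,\spin)\leq 4t(Y,\spin)+4$.

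The step I expect to be most delicate is the management of $S$: it obstructs the clean description of $\ker Q$ (so that $Q$ of a low $\gamma$-tower class may land in $S$ rather than vanish), and one must bound its $V$-torsion width in terms of $t(Y,\spin)$ — again through the exact triangle — before the length count above becomes unconditional. The remaining boundary case, in which the $\gamma$- or $\alpha$-tower sits within two units of the $U$-tower of $\HMt_{*}(Y,\spin)$, is handled separately and feeds only into the additive constant.
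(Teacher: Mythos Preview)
Your approach is essentially the paper's: use the Gysin triangle, observe that low elements of a tower are killed by $Q$ hence lie in $\operatorname{im}\pi_*$, lift to $\HMt_*$, and extract a long $U$-string in the torsion part. The paper in fact proves exactly your key lemma in the sharper form $\beta-\gamma\le t(Y,\spin)+1$ (your derivation gives this too, not the $2t$ you wrote), but then takes a detour through the Fr{\o}yshov invariant $\delta$: it bounds $\delta-\gamma\le 2t+2$, dualizes to $\alpha-\delta\le 2t+2$, and sums. Your organization---dualize $\beta-\gamma\le t+1$ directly to $\alpha-\beta\le t+1$ via $\beta(-Y)=-\beta(Y)$, $\gamma(-Y)=-\alpha(Y)$---is more efficient and, carried through, actually yields $w\le 2t+2$, stronger than the stated $4t+4$.

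Two points to clean up. First, your worry about the torsion submodule $S$ is unnecessary: the three towers are by definition $i_*(\HSb_*)$, and since $Q$ is part of the $\Rin$-module structure it commutes with $i_*$, so $Q$ of a $\gamma$-tower element lands in $i_*(\HSb_*)$ and in fact in the $\beta$-tower (this is the $Q$-action on $\HSb_*$); below grading $b$ that is zero on the nose, with no $S$-component to manage. Second, your ``first rigidity observation'' that a $V$-string with no zero terms lies entirely in one summand of $\HMt_*$ is false as stated. What you actually need is that $U^{2\lambda-2}\cdot x_{\lambda-1}^{\mathrm{torsion}}\neq 0$; this follows because $\pi_*(U^{2\lambda-2}x_{\lambda-1})=V^{\lambda-1}u_{\lambda-1}$ lies in the $\gamma$-tower while, by the bar Gysin triangle, $\pi_*$ of the $U$-tower never hits the $\gamma$-tower---so the tower component of $x_{\lambda-1}$ cannot account for the image. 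This is exactly the structural input the paper invokes, replacing your two-case analysis.
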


The key tool is the Gysin exact triangle relating $\Pin$-monopole Floer homology to its classical counterpart. In particular, we have for the \textit{to}-versions an exact triangle of the form
\begin{equation*}
\cdots \stackrel{\pi_*}{\longrightarrow} \HSt_{*}(Y,\spin)\stackrel{\cdot Q}{\longrightarrow}\HSt_{*}(Y,v)\stackrel{\iota_*}{\longrightarrow} \HMt_{*}(Y,\spin)\stackrel{\pi_*}{\longrightarrow} \HSt_{*}(Y,\spin)\stackrel{\cdot Q}{\longrightarrow} \cdots,
\end{equation*}
where the maps $\iota_*$ and $\pi_*$ have degree zero, see Chapter $4$ of \cite{Lin}. This is an exact sequence of $\Rin$-modules where on $\HMt_{*}$ we have that $V$ acts as $U^2$ and $Q$ acts as zero. There are analogue triangles for the \textit{from} and \textit{bar} versions; the latter looks like
\begin{center}
\begin{tikzpicture}
\matrix (m) [matrix of math nodes,row sep=1.5em,column sep=1em,minimum width=2em]
  {\cdot &\cdot &\cdot\\
  \ztwo &\ztwo& \ztwo\\
  \ztwo&0&\ztwo\\
  \ztwo &\ztwo&\ztwo\\};
  \path[-stealth]
  (m-2-1) edge node [above]{} (m-2-2)
      (m-4-2) edge node [above]{} (m-4-3)
       (m-2-3) edge node [above]{} (m-3-1)
        (m-3-3) edge node [above]{} (m-4-1)
  ;
\end{tikzpicture}
\end{center}
repeated in a $4$-periodic fashion. Here the middle column represents $\HMb_*$. In particular, in degrees high enough the Gysin triangle for the \textit{to} groups agrees with this too. 
\begin{proof}
Recall that the degree of element at the bottom of the tower in standard Floer homology $\HMt_*$ is denoted by $2\delta$ (so that $\delta=-h$ is the Fr\o yshov invariant with the opposite sign). Inspecting the map $\pi_*$ in the \textit{bar} version of the Gysin triangle (the arrows on the right side in the diagram above), we obtain by commutativity of the diagrams that the bottom of the $\alpha$-tower in $\HSt_*$ (which is in the image under $i_*$ of the elements in the bottom left and right of the diagram above) lies above of the bottom of the $U$-tower in $\HMt_*$. Notice that this argument shows that $\alpha\geq \delta$; on the other hand, there is in general no estimate relating $\beta$ and $\delta$.
\par
Consider now the portion of the $\beta$-tower in degree strictly below $2\delta$, denote by $\mathbf{x}$ the top degree element among them. Because the $\alpha$ tower ends in degrees $\geq 2\delta$, all the elements in this portion are killed by $Q$, hence are in the image of $\pi_*$. Choose $\mathbf{y}$ in $\HMt_*$ such that $\pi_*(\mathbf{y})=\mathbf{x}$; notice that $\mathbf{y}$ does not belong to the $U$-tower for degree (parity) reasons. If there are $k$ elements in this portion of the $\beta$-tower, $V^{k-1}\cdot \mathbf{x}\neq 0$ hence by commutativity of the diagram also 
\begin{equation*}
\pi_*(U^{2k-2}\cdot \mathbf{y})=V^{k-1}\cdot \mathbf{x}\neq 0,
\end{equation*}
so that $U^{2k-2}\cdot \mathbf{y}\neq 0$ and $t(Y,\spin)\geq 2k-1$. Hence
\begin{equation*}
\delta(Y,\spin)-\min\{\delta(Y,\spin),\beta(Y,\spin)\}\leq 2\cdot\#\{\text{elements in $\beta$ tower of degree $<2\delta$}\}=t(Y,\spin)+1.
\end{equation*}
Now, the Gysin triangle for the bar groups shows that the elements of the $\gamma$-tower are never in the image of the $U$-tower; the argument above can then be applied to the part of the $\gamma$-tower in degrees below $2\beta+1$ to obtain the bound
\begin{equation*}
\beta(Y,\spin)-\gamma(Y,\spin)\leq 2\cdot\#\{\text{elements in $\gamma$ tower of degree $<2\beta+1$}\}=t(Y,\spin)+1.
\end{equation*}
Putting the two estimates together, we obtain 
\begin{align*}
\delta(Y)-\gamma(Y)&=\delta(Y)-\beta(Y)+\beta(Y)-\gamma(Y)\leq\\
&\leq \delta(Y)-\min\{\delta(Y),\beta(Y)\}+\beta(Y)-\gamma(Y)\leq\\
&\leq2t(Y)+2.
\end{align*}
Finally, applying the inequality to $\bar{Y}$, we also obtain using $\delta(\bar{Y},\spin)=-\delta(Y,\spin)$ that
\begin{equation*}
\alpha(Y)-\delta(Y)\leq =2t({Y})+2,
\end{equation*}
(where we also used $t(\bar{Y},\spin)=t(Y,\spin)$ by Poincar\'e duality), and the result follows.
\end{proof}

We use this to prove the following result, which is the main tool we will use to study the homology cobordism group in the present paper.
\begin{prop}\label{obstrpin}
Suppose we have a collection of integral homology spheres $\{Z_k\}$ such that $t(Z_k)\leq N$ for all $C$. Then for every element $Y$ in the subgroup of the homology cobordism group generated by them, $w(Y)\leq 4N+4$. In particular, $\Sigma(2, 8n-1,16n-1)$ does not belong to that subgroup for $n>2N+2$.
\end{prop}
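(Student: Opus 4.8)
The plan is to combine the $\Pin$-width--to--$U$-torsion inequality from Proposition~\ref{widthestimate} with the subadditivity of the $U$-torsion width under connected sum, and then invoke the explicit computation \eqref{widthexample}. First I would establish that $t(\cdot)$ behaves subadditively: if $Y = Z_{k_1} \mathbin{\#} \cdots \mathbin{\#} Z_{k_m}$ (with signs, i.e.\ possibly orientation-reversed copies), then $t(Y) \leq t(Z_{k_1}) + \cdots + t(Z_{k_m})$. This should follow from the connected sum formula for monopole Floer homology, $\HMt_*(Y_1 \mathbin{\#} Y_2) \cong H_*\bigl(\widehat{\mathit{CM}}(Y_1) \otimes_{\ztwo[U]} \widehat{\mathit{CM}}(Y_2)\bigr)$ up to grading shift (a K\"unneth-type statement), together with the observation that $t$ for a $\ztwo[U]$-module of the standard form $\V^+ \oplus \bigoplus \ztwo[U]/U^{n_i}$ records the longest $U$-torsion tower, and tensoring such modules over $\ztwo[U]$ multiplies lengths at worst additively in the exponents (the relevant $\Tor$ groups only make the torsion longer in a controlled way); one should also use $t(\bar Z, \spin) = t(Z,\spin)$ by Poincar\'e duality to handle inverses in $\Theta_3$. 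Granting this, if $t(Z_k) \leq N$ for all $k$ and $Y$ lies in the subgroup generated by $\{Z_k\}$, then $Y$ is homology cobordant to some iterated connected sum of at most—wait, the number of summands is unbounded, so I need the bound to be uniform.

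This is the crux, so let me restructure. The correct route is \emph{not} to bound $t(Y)$ by a multiple of $N$ depending on the number of summands, but rather to bound $w(Y)$ directly. I would argue: $w$ is a homology cobordism invariant (additive under connected sum, by the connected sum behavior of the $\alpha,\beta,\gamma$ invariants established in \cite{Lin}), so $w(Y) = \sum \pm w(Z_{k_j})$... but that also grows. So instead the key point must be that $w$ is bounded on the \emph{generators} and we use that $w$ is a homomorphism-like quantity only on a bounded-width class. Actually the cleanest formulation: by Proposition~\ref{widthestimate}, $w(Z_k) \leq 4t(Z_k) + 4 \leq 4N+4$ for every generator. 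Now I claim $w$ restricted to the subgroup generated by the $Z_k$ takes values in $[0, 4N+4]$—but this needs that the generators, having bounded width, generate only bounded-width elements, which is \emph{false} in general for $\alpha,\beta,\gamma$ separately. The resolution intended by the author must be: apply Proposition~\ref{widthestimate} to $Y$ itself via a \emph{uniform} bound on $t(Y)$. So the real content I must prove is:

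\textbf{If $Y$ lies in the subgroup generated by homology spheres with $t \leq N$, then $t(Y) \leq N$.} This is what makes everything work, and I expect it to be the main obstacle. The mechanism: the $U$-torsion width $t(Y,\spin)$ is bounded above by twice the maximal spectral flow (Proposition~\ref{torsbound}), and—more to the point for stability—$t$ can be reinterpreted purely in terms of the length of the longest finite $U$-tower in $\HMt_*$, which in turn is controlled by the \emph{local equivalence} class of the Floer chain complex (in the sense of the analogous Heegaard-Floer theory). Local equivalence is a group homomorphism from $\Theta_3$, and $t$ factors through (is a monotone function of) the local equivalence class in a way compatible with tensor product; crucially, in the local equivalence monoid the "width" of a tensor product equals the max, not the sum, because only the "most negative" tower survives stably. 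I would make this precise by showing $t(Y_1 \mathbin{\#} Y_2) \leq \max\{t(Y_1), t(Y_2)\}$ using the tensor product description together with the fact that in $\HMt_*(Y_1 \mathbin{\#} Y_2)$ every finite $U$-torsion summand arises from pairing a torsion class on one side with the infinite tower on the other (a torsion-times-torsion contribution sits inside a longer torsion tower coming from torsion-times-tower, so it does not increase the width beyond the individual maxima). Given this, an induction on the number of connected summands yields $t(Y) \leq N$ for all $Y$ in the subgroup; then Proposition~\ref{widthestimate} gives $w(Y) \leq 4N+4$. Finally, by \eqref{widthexample} we have $w(\Sigma(2,8n-1,16n-1)) = 2n$, so if $\Sigma(2,8n-1,16n-1)$ belonged to the subgroup we would need $2n \leq 4N+4$, i.e.\ $n \leq 2N+2$; hence for $n > 2N+2$ it does not belong to the subgroup, as claimed. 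The delicate step—and the one I would write out carefully—is the $t(Y_1 \mathbin{\#} Y_2) \leq \max\{t(Y_1),t(Y_2)\}$ inequality, since it is exactly the place where one must see that connected sum does not accumulate $U$-torsion width.
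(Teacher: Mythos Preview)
Your proposal eventually lands on the correct strategy, which is the same as the paper's: show that $t$ is stable under connected sum via $t(Y_1\hash Y_2)\leq\max\{t(Y_1),t(Y_2)\}$, use $t(\bar Y)=t(Y)$ to handle inverses, then apply Proposition~\ref{widthestimate} and the computation~(\ref{widthexample}).

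The main difference is in how you justify the key inequality. You gesture at local equivalence and a heuristic about torsion-times-torsion contributions ``sitting inside'' torsion-times-tower ones; the paper instead uses the connected sum formula for $\HMf_*$ as a bigraded $\Tor$ over $\ztwo[[U]]$ and simply writes down the three elementary computations $\Tor(\ztwo[U]/U^n,\ztwo[U]/U^m)$, $\Tor(\ztwo[U],\ztwo[U]/U^n)$, $\Tor(\ztwo[U],\ztwo[U])$ over the PID $\ztwo[U]$. This yields the stronger statement $t(Y_1\hash Y_2)=\max\{t(Y_1),t(Y_2)\}$ with no appeal to local equivalence at all. Your route via local equivalence could in principle be made to work, but it imports more machinery than needed and your informal justification (``torsion-times-torsion sits inside a longer tower'') is not quite the right picture: the point is that $\Tor_*(\ztwo[U]/U^n,\ztwo[U]/U^m)$ contributes two copies of $\ztwo[U]/U^{\min(n,m)}$, while tower-times-torsion contributes $\ztwo[U]/U^n$, so the maximum is realized and never exceeded. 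Writing out those three $\Tor$ computations is the clean way to finish.
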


\begin{proof}
We have the following property
\begin{equation*}
t(Y\hash Y')=\mathrm{max}\{t(Y),t(Y')\},
\end{equation*}
which is a consequence of the isomorphism with the bigraded Tor
\begin{equation*}
\HMf_{*}(Y\hash Y')=\mathrm{Tor}^{\ztwo[[U]]}_{*,*}(\HMf_{*}(Y),\HMf_{*}(Y')).
\end{equation*} 
and the fact that, forgetting about the gradings,
\begin{align*}
\mathrm{Tor}^{\ztwo[U]}_{*,*}(\ztwo[U]/U^{n},\ztwo[U]/U^{m})&=\ztwo[U]/U^{n}\oplus \ztwo[U]/U^{n}\text{ for }n\leq m\\
\mathrm{Tor}^{\ztwo[U]}_{*,*}(\ztwo[U],\ztwo[U]/U^{n})&=\ztwo[U]/U^{n}.\\
\mathrm{Tor}^{\ztwo[U]}_{*,*}(\ztwo[U],\ztwo[U])&=\ztwo[U].
\end{align*}
This follows (with a little homological algebra over the principal ideal domain $\ztwo[U]$) from the description as mapping cone of Bloom, Mrowka and Ozsv\'ath (see \cite{Lin2}) or alternatively from the formula in Heegaard Floer homology \cite{OSz2} via the isomorphism between the theories (see \cite{HFHM1}, \cite{CGH} and subsequent papers). Because also $t(\bar{Y})=t(Y)$, we have that $t(X)\leq N$ for any $X$ obtained by connected sums of elements of our collection (with multiplicity and either orientation). The result then follows from Proposition \ref{widthestimate} and the computation (\ref{widthexample}).
\end{proof}

\begin{remark}
That the result readily generalizes to the case of $\mathbb{Z}/2\mathbb{Z}$-homology spheres.
\end{remark}

\begin{remark}One can easily show the stronger statement that for $n>2N+2$ the Brieskorn sphere $\Sigma(2, 8n-1,16n-1)$ does not belong to the subgroup generated by the $Z_k$s and all integral homology spheres $Y$ with $\alpha(Y)=\beta(Y)=\gamma(Y)$. This is because connecting sum with such a $Y$ does not change the value of the $\Pin$-width $w$ \cite{Lin2}. Furthermore, using instanton techniques \cite{Fur} one can show that the subgroup generated by these $Y$ contains a $\mathbb{Z}^{\infty}$. We decided to state the weaker version of the result as it is more closely relevant to our problem.
\end{remark}

\vspace{0.5cm}

\section{The Sobolev constant}\label{sobolev}
In this section we study the constant for the embedding $L^2_1\hookrightarrow L^6$; the content is essentially that of \cite[Section 1]{Li} with the dimensional constants made explicit and some added explanations for the reader's convenience. We will work with real valued functions for most of the section, in which case the natural $L^2_1$ norm we consider is
\begin{equation*}
\|f\|^2_{L^2_1}=\|f\|_{L^2}^2+\|df\|_{L^2}^2,
\end{equation*}
and then discuss the closed of coexact $1$-forms.
\\
\par
We begin by recalling the following definition from \cite{Li}, which is closely related to the Sobolev embedding  $L^1_1\hookrightarrow L^{3/2}$.

\begin{defn}
The Sobolev constant $\mathfrak{S}_Y$ of $M$ is the best constant for which the inequality
\begin{equation*}
\mathfrak{S}_Y\cdot \inf_{a\in\mathbb{R}}\|f-a\|^3_{L^{3/2}}\leq \|df\|^3_{L^1}
\end{equation*}
holds.
\end{defn}

Our goal is to relate $\mathfrak{S}_Y$ with the constant of the embedding $L^2_1\hookrightarrow L^6$. We begin with the following.

\begin{lemma}
If $f\in L^2_1$ and $\int_Y \mathrm{sign}(f)|f|^2d\mathrm{vol}=0$, then the inequality
\begin{equation*}
 \frac{\mathfrak{S}_Y^{2/3}}{16}\|f\|^2_{L^6}\leq \|df\|^2_2
\end{equation*}
holds
\end{lemma}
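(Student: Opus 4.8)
The plan is to feed the auxiliary function $\phi=\mathrm{sign}(f)\,|f|^{4}$ into the defining inequality of $\mathfrak{S}_Y$ and to exploit the normalization hypothesis to replace $\inf_{a\in\mathbb{R}}\|\phi-a\|_{L^{3/2}}$ by $\|\phi\|_{L^{3/2}}$. First I record the basic properties of $\phi$. Since $Y$ is a closed Riemannian three-manifold, $f\in L^2_1$ lies in $L^{6}$ (the embedding $L^2_1\hookrightarrow L^{6}$ is classical, even if its sharp constant is exactly what this section aims to pin down), so $\|f\|_{L^6}<\infty$ and $|f|^{3}\in L^{2}$. The real function $t\mapsto\mathrm{sign}(t)|t|^{4}$ is $C^{1}$ with derivative $4|t|^{3}$, so by the chain rule for Sobolev functions $\phi\in W^{1,1}$ with $d\phi=4|f|^{3}\,df$; hence Cauchy--Schwarz gives
\begin{equation*}
\|d\phi\|_{L^{1}}=4\int_Y|f|^{3}\,|df|\,d\mathrm{vol}\le 4\,\big\||f|^{3}\big\|_{L^{2}}\|df\|_{L^{2}}=4\,\|f\|_{L^{6}}^{3}\,\|df\|_{L^{2}},
\end{equation*}
while $\|\phi\|_{L^{3/2}}=\big(\int_Y|f|^{6}\,d\mathrm{vol}\big)^{2/3}=\|f\|_{L^{6}}^{4}$.

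Next I identify the infimum. The map $a\mapsto F(a):=\int_Y|\phi-a|^{3/2}\,d\mathrm{vol}$ is convex, and — differentiating under the integral sign, which is justified by dominated convergence with dominating function $|f|^{2}+1\in L^{1}$ — it is differentiable with $F'(a)=-\frac{3}{2}\int_Y|\phi-a|^{1/2}\mathrm{sign}(\phi-a)\,d\mathrm{vol}$. Since $|\phi|^{1/2}\mathrm{sign}(\phi)=|f|^{2}\mathrm{sign}(f)$, the hypothesis $\int_Y\mathrm{sign}(f)\,|f|^{2}\,d\mathrm{vol}=0$ is precisely the statement $F'(0)=0$, so by convexity $a=0$ is a global minimum of $F$ and therefore $\inf_{a\in\mathbb{R}}\|\phi-a\|_{L^{3/2}}=\|\phi\|_{L^{3/2}}=\|f\|_{L^{6}}^{4}$. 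Plugging this and the bound on $\|d\phi\|_{L^1}$ into the defining inequality $\mathfrak{S}_Y\,\inf_a\|\phi-a\|_{L^{3/2}}^{3}\le\|d\phi\|_{L^{1}}^{3}$ gives $\mathfrak{S}_Y\,\|f\|_{L^6}^{12}\le 64\,\|f\|_{L^6}^{9}\,\|df\|_{L^2}^{3}$; dividing by $\|f\|_{L^6}^{9}$ (the case $f\equiv 0$ being trivial), taking cube roots, and squaring yields $\mathfrak{S}_Y^{2/3}\|f\|_{L^6}^{2}\le 16\,\|df\|_{L^2}^{2}$, which is the claim.

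The one step requiring genuine care is the chain-rule identity $d\phi=4|f|^{3}\,df$ and, relatedly, that $\phi$ is an admissible competitor in the $L^1_1\hookrightarrow L^{3/2}$ inequality when $f$ is only assumed to lie in $L^2_1$. I would justify this by approximating $f$ by smooth functions in $L^2_1$ and passing to the limit, using $f\in L^{6}$ to control the nonlinearity; the behavior on $\{f=0\}$ is harmless since the would-be derivative $4|f|^{3}$ vanishes there.
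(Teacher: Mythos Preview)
Your argument is correct and follows essentially the same route as the paper: set $\phi=\mathrm{sign}(f)|f|^{4}$, use the hypothesis to identify $\inf_a\|\phi-a\|_{L^{3/2}}=\|\phi\|_{L^{3/2}}$, apply the defining inequality for $\mathfrak{S}_Y$, and bound $\|d\phi\|_{L^1}$ by Cauchy--Schwarz. The paper simply cites \cite[IV.5]{Cha} for the identification of the infimum, whereas you spell out the convexity/first-derivative argument and add some care about the chain rule for Sobolev functions; these are welcome elaborations but not a different method.
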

\begin{proof}
Set $g=\mathrm{sign}(f)|f|^4$, so that $\int_Y \mathrm{sign}(g)|g|^{1/2}d\mathrm{vol}=0$, hence 
\begin{equation*}
\|g\|_{L^{3/2}}=\inf_{a\in\mathbb{R}}\|g-a\|_{L^{3/2}}
\end{equation*}
(see the discussion in \cite[IV.5]{Cha}). By definition of the Sobolev constant we hence obtain
\begin{align*}
\mathfrak{S}_Y\|f\|^{12}_{L^6}&=\mathfrak{S}_Y\|g\|^3_{L^{3/2}}\leq \|dg\|^3_{L^1}\\
&=\|4f^3df\|^3_{L^1}\leq 64\|f\|_{L^6}^{9}\|df\|^3_{L^2}.
\end{align*}
where the last inequality follows from Cauchy-Schwartz. Hence
\begin{equation*}
\mathfrak{S}_Y\|f\|^{3}_{L^6}\leq 64 \|df\|^3_{L^2}
\end{equation*}
and the result follows.
\end{proof}

This is close to what we want, except of the assumption about the integral. It can be weakened via the following.
\begin{prop}For all $f\in L^2_1$, the inequality
\begin{equation*}
\|df\|^2_{L^2}\geq \frac{\mathfrak{S}_Y^{2/3}}{8}\left(\frac{1}{2^{8/3}}\|f\|_{L^6}^2- V^{-2/3}\|f\|^2_{L^2}\right)
\end{equation*}
holds
\end{prop}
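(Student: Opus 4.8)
The plan is to reduce the general case to the lemma just proved by subtracting off the ``median-like'' constant and controlling the error. Given $f\in L^2_1$, I would choose $a\in\mathbb{R}$ so that $g=f-a$ satisfies $\int_Y\mathrm{sign}(g)|g|^2\,d\mathrm{vol}=0$; such an $a$ exists because the function $a\mapsto\int_Y\mathrm{sign}(f-a)|f-a|^2\,d\mathrm{vol}$ is continuous, non-increasing, and changes sign as $a$ runs over $\mathbb{R}$ (it is positive for $a$ very negative and negative for $a$ very positive). Then the previous lemma applies to $g$, giving $\frac{\mathfrak{S}_Y^{2/3}}{16}\|g\|_{L^6}^2\leq\|dg\|_{L^2}^2=\|df\|_{L^2}^2$ since $dg=df$.

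Next I would relate $\|g\|_{L^6}$ back to $\|f\|_{L^6}$ and $\|f\|_{L^2}$. By the triangle inequality $\|f\|_{L^6}\leq\|g\|_{L^6}+\|a\|_{L^6}=\|g\|_{L^6}+|a|\,V^{1/6}$, so $\|g\|_{L^6}\geq\|f\|_{L^6}-|a|V^{1/6}$. The point is to bound $|a|$: I would estimate $|a|$ in terms of $\|f\|_{L^2}$ using that $a$ is comparable to an $L^2$-average of $f$. Concretely, from the defining property of $a$ one can extract $|a|\,V^{1/2}\lesssim\|f\|_{L^2}$ — for instance, writing the vanishing-integral condition as a balance between the parts of $Y$ where $f>a$ and $f<a$, or more simply bounding $|a|$ by the $L^2$-norm of $f-g$ and noting $\|g\|_{L^2}\leq\|f\|_{L^2}+|a|V^{1/2}$ combined with a reverse estimate. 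The cleanest route is probably: $|a|V^{1/2}=\|a\|_{L^2}\leq\|f-g\|_{L^2}\leq\|f\|_{L^2}+\|g\|_{L^2}$, and then separately bound $\|g\|_{L^2}\leq\|f\|_{L^2}$, which should follow from the minimizing/balancing property of $a$ (the choice of $a$ making the signed integral vanish also makes $\|f-a\|_{L^2}$ no larger than $\|f\|_{L^2}$ after possibly adjusting, or one uses that $g$ has ``mean zero'' in the relevant sense). This yields $|a|V^{1/6}=|a|V^{1/2}\cdot V^{-1/3}\leq 2V^{-1/3}\|f\|_{L^2}$.

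Then I would combine: $\|g\|_{L^6}^2\geq\bigl(\|f\|_{L^6}-2V^{-1/3}\|f\|_{L^2}\bigr)^2$, and using the elementary inequality $(x-y)^2\geq\frac{1}{2}x^2-y^2$ (valid for all real $x,y$) with $x=\|f\|_{L^6}$ and $y=2V^{-1/3}\|f\|_{L^2}$, obtain $\|g\|_{L^6}^2\geq\frac12\|f\|_{L^6}^2-4V^{-2/3}\|f\|_{L^2}^2$. Feeding this into the lemma gives $\|df\|_{L^2}^2\geq\frac{\mathfrak{S}_Y^{2/3}}{16}\bigl(\frac12\|f\|_{L^6}^2-4V^{-2/3}\|f\|_{L^2}^2\bigr)$, which I would then massage to match the stated form $\frac{\mathfrak{S}_Y^{2/3}}{8}\bigl(\frac{1}{2^{8/3}}\|f\|_{L^6}^2-V^{-2/3}\|f\|_{L^2}^2\bigr)$ by tracking constants; the exact exponents $2^{8/3}$ suggest the actual argument uses a slightly different (sharper in one place, lossier in another) splitting, likely applying the convexity inequality $(x-y)^2 \ge 2^{-8/3}x^2 - c\,y^2$ tuned to absorb the factor of $\frac{1}{16}$ versus $\frac{1}{8}$ cleanly.

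The main obstacle I anticipate is the bound on $|a|$, i.e.\ showing $\|f-a\|_{L^2}$ (or $|a|V^{1/2}$) is controlled by $\|f\|_{L^2}$ with an explicit constant. This is where one must genuinely use the defining balancing property of the constant $a$ rather than just the triangle inequality, since a priori $a$ could be large if $f$ is concentrated. The resolution should come from the discussion in \cite[IV.5]{Cha} already invoked in the lemma: the value $a$ realizing $\inf_{a}\|g-a\|_{L^{3/2}}$-type quantities is a kind of median, and medians are bounded by $L^p$ norms via Chebyshev's inequality. I would make this quantitative, being careful that the constant that emerges is exactly what produces the $V^{-2/3}$ coefficient and the power $2^{8/3}$ in the final inequality.
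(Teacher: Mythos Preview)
Your overall strategy is the paper's: subtract the balancing constant $a$, apply the lemma to $g=f-a$, then compare $\|g\|_{L^6}$ with $\|f\|_{L^6}$ and bound $|a|$ by $\|f\|_{L^2}$. The genuine gap is exactly where you flagged it, and your proposed route through $\|g\|_{L^2}\leq\|f\|_{L^2}$ does not work. Take $f=\mathbf{1}_E$ with $|E|=\epsilon$ small; the balancing condition forces $(1-a)^2\epsilon=a^2(V-\epsilon)$, so $a\approx\sqrt{\epsilon/V}$, and then $\|f-a\|_{L^2}^2=(1-a)^2\epsilon+a^2(V-\epsilon)=2(1-a)^2\epsilon\to 2\|f\|_{L^2}^2$. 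So $a$ is \emph{not} the $L^2$-minimizing constant and no triangle-inequality manipulation will give the bound. The paper instead proves $a^2V\leq 2\|f\|_{L^2}^2$ directly from the balancing condition: assuming $a\geq0$ (else replace $f$ by $-f$), set $Y_{\pm}=\{\pm(f-a)>0\}$ so that $\int_{Y_+}(f-a)^2=\int_{Y_-}(f-a)^2$; on $Y_+$ one has $f>a\geq0$ hence $(f-a)^2\leq f^2-a^2$ pointwise, while on $Y_-$ the convexity bound $a^2\leq 2f^2+2(f-a)^2$ gives $(f-a)^2\geq \tfrac{a^2}{2}-f^2$; integrating and combining via the equality above yields $\int_Y f^2\geq V_+a^2+\tfrac{V_-}{2}a^2\geq\tfrac{V}{2}a^2$.

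A secondary difference: for the $L^6$ comparison the paper does not use the triangle inequality but rather the convexity of $t\mapsto t^6$ to get $\|f-a\|_{L^6}^6\geq\tfrac{1}{32}\|f\|_{L^6}^6-a^6V$, and then the elementary inequality $x^3-y^3\geq(x-y)^3$ for $x\geq y\geq0$ (applied to the cubes of $\tfrac{1}{2^{5/3}}\|f\|_{L^6}^2$ and $a^2V^{1/3}$) to pass to squares. This is what produces the precise constant $2^{8/3}$; your triangle-inequality route with $(x-y)^2\geq\tfrac12x^2-y^2$ would give a valid inequality with different (in fact slightly better) constants once the correct bound $a^2V\leq 2\|f\|_{L^2}^2$ is in hand, but would not reproduce the stated form.
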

 Notice that the right hand side can be negative (e.g. for constants).

\begin{proof}
Choose $k$ such that 
\begin{equation}\label{intzero}
\int \mathrm{sign}(f-k)|f-k|^2d\mathrm{vol}=0.
\end{equation}
By the previous lemma, this implies that
\begin{equation*}
\|df\|^2_{L^2}=\|d(f-k)\|^2_{L^2}\geq \frac{\mathfrak{S}_Y^{2/3}}{16}\|f-k\|^2_{L^6}
\end{equation*}
Now recall that $t\mapsto t^6$ is a convex function, hence
\begin{equation*}
\left(\frac{x+y}{2}\right)^6\leq \frac{x^6+y^6}{2}.
\end{equation*}
Hence
\begin{equation*}
\int f^6d\mathrm{vol}=\int ((f-k)+k)^6d\mathrm{vol}\leq 32(\int (f-k)^6d\mathrm{vol}+\int k^6d\mathrm{vol})=32(\int (f-k)^6d\mathrm{vol}+ k^6\cdot V).
\end{equation*}
so that
\begin{equation}\label{almostthere}
\|f-k\|^6_{L^6}=\int (f-k)^6d\mathrm{vol}\geq\frac{1}{32}\|f\|_{L^6}^6-k^6\cdot V.
\end{equation}
Notice that in this step we lose sharpness for constant functions, and the right hand side is possibly negative. Now if $a\geq b\geq0$ we have
\begin{equation*}
(a^3-b^3)\geq (a-b)^3
\end{equation*}
so that, if the right hand side of (\ref{almostthere}) is positive,
\begin{equation*}
\frac{1}{32}\|f\|_{L^6}^6-k^6\cdot V\geq (\frac{1}{2^{5/3}}\|f\|_{L^6}^2-k^2 V^{1/3})^3.
\end{equation*}
Hence putting everything together
\begin{equation*}
\|df\|^2_{L^2}\geq \frac{\mathfrak{S}_Y^{2/3}}{16}\left(\frac{1}{2^{5/3}}\|f\|_{L^6}^2-k^2 V^{1/3}\right),
\end{equation*}
where again the right hand side might be negative.
\par
We are left to understand the relation between $k$ and the $L^2$ norm of $f$, which is done as follows. Set $Y_{\pm}$ the set where $f-k$ is positive/negative respectively, and denote their volume by $V_{\pm}$. Then (\ref{intzero}) implies that
\begin{equation}\label{equalint}
\int_{Y_+}(f-k)^2d\mathrm{vol}=\int_{Y_-}(f-k)^2d\mathrm{vol}.
\end{equation}
As $f>k$ on $Y_+$,
\begin{equation*}
\int_{Y_+}(f-k)^2d\mathrm{vol}\leq \int_{Y_+}f^2d\mathrm{vol}-V_+\cdot k^2.
\end{equation*}
On $Y_-$, using the convexity estimate
\begin{equation*}
\frac{k^2}{2}\leq f^2+(f-k)^2
\end{equation*}
we obtain
\begin{equation*}
\int_{Y_-}(f-k)^2d\mathrm{vol}\geq -\int_{Y_-}f^2d\mathrm{vol}+V_-\cdot \frac{k^2}{2}.
\end{equation*}
Hence by (\ref{equalint}) we see
\begin{equation*}
\int_{Y_+}f^2d\mathrm{vol}-V_+\cdot k^2\geq -\int_{Y_-}f^2d\mathrm{vol}+V_-\cdot \frac{k^2}{2}
\end{equation*}
so that as $V\geq V_++V_-$ we have
\begin{equation*}
\|f\|_{L^2}^2\geq V\cdot\frac{k^2}{2}.
\end{equation*}
Plugging this into (\ref{almostthere}), we obtain the desired result.
\end{proof}

Now, every hyperbolic three manifold has volume $\geq0.94$ \cite{Mil}, so that
\begin{equation*}
\|df\|^2_{L^2}\geq \frac{\mathfrak{S}_Y^{2/3}}{8}\left(\frac{1}{2^{8/3}}\|f\|_{L^6}^2- 1.05\cdot \|f\|^2_{L^2}\right)
\end{equation*}
Rearranging the inequality, we obtain the following.
\begin{cor}\label{sobfun}
For every $f\in L^2_1$, the inequality
\begin{equation*}
\|f\|^2_{L^2_1}\geq \frac{\min(\mathfrak{S}_Y^{2/3}/8,1/1.05)}{2^{8/3}}\|f\|_{L^6}^2
\end{equation*}
holds.
\end{cor}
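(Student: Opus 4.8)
The plan is to extract the Corollary from the Proposition just proved with essentially no new analytic input: all the real work — passing from the Sobolev constant $\mathfrak{S}_Y$ to a bound on $\|f\|_{L^6}$ in terms of $\|df\|_{L^2}$ without the integral–normalization hypothesis — is already contained in the preceding Lemma and Proposition (and the universal bound $\mathrm{vol}(Y)\geq 0.94$ from \cite{Mil}, which is what turns $V^{-2/3}$ into the numerical constant $1.05$). What remains is a short bookkeeping step that combines the displayed inequality
\[
\|df\|^2_{L^2}\geq \frac{\mathfrak{S}_Y^{2/3}}{8}\Big(\tfrac{1}{2^{8/3}}\|f\|_{L^6}^2-1.05\,\|f\|^2_{L^2}\Big)
\]
with the trivial identity $\|f\|_{L^2_1}^2=\|f\|_{L^2}^2+\|df\|_{L^2}^2$, while being careful about the sign of the bracketed term and about the regime in which $\mathfrak{S}_Y$ is large.

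Concretely, I would set $c=\min\!\big(\mathfrak{S}_Y^{2/3}/8,\ 1/1.05\big)>0$ and first observe that the displayed inequality above remains valid with the prefactor $\mathfrak{S}_Y^{2/3}/8$ replaced by the (possibly smaller) constant $c$: writing $E=\tfrac{1}{2^{8/3}}\|f\|_{L^6}^2-1.05\,\|f\|^2_{L^2}$, if $E\leq 0$ then $cE\leq 0\leq \|df\|_{L^2}^2$, while if $E>0$ then shrinking the prefactor from $\mathfrak{S}_Y^{2/3}/8$ down to $c\leq \mathfrak{S}_Y^{2/3}/8$ only decreases the right-hand side, so $cE\leq \tfrac{\mathfrak{S}_Y^{2/3}}{8}E\leq\|df\|_{L^2}^2$. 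Hence in all cases
\[
\|df\|^2_{L^2}\geq \frac{c}{2^{8/3}}\|f\|_{L^6}^2 - 1.05\,c\,\|f\|^2_{L^2}.
\]
Next I would invoke the defining inequality $c\leq 1/1.05$, i.e.\ $1.05\,c\leq 1$, to bound the subtracted term by $\|f\|_{L^2}^2$, obtaining $\|df\|^2_{L^2}\geq \tfrac{c}{2^{8/3}}\|f\|_{L^6}^2-\|f\|^2_{L^2}$; adding $\|f\|_{L^2}^2$ to both sides and recognizing the left-hand side as $\|f\|_{L^2_1}^2$ yields precisely the asserted estimate.

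I do not expect a genuine obstacle. The only point that needs care is the case split hidden in the $\min$: one must remember the regime $\mathfrak{S}_Y^{2/3}/8>1/1.05$, where the naive substitution would leave a negative coefficient on $\|f\|_{L^2}^2$ inside $\|f\|_{L^2_1}^2$ and the argument would fail — this is exactly why the statement is phrased with $\min(\mathfrak{S}_Y^{2/3}/8,\,1/1.05)$ rather than with $\mathfrak{S}_Y^{2/3}/8$, and the monotonicity observation in the first step disposes of both regimes uniformly.
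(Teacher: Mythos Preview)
Your proof is correct and takes essentially the same approach as the paper: the paper simply writes ``Rearranging the inequality, we obtain the following,'' and your argument is precisely that rearrangement carried out carefully, including the case split on the sign of $E$ and the reason for the $\min$ with $1/1.05$. There is nothing to add.
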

Hence we obtained control of the constant for the embedding $L^2_1\hookrightarrow L^6$ in terms of the Sobolev constant $\mathfrak{S}_Y$, in the case of functions. Notice that in general we will use an explicit lower bound for $\mathfrak{S}_Y^{2/3}/8$ which is much smaller than $1/1.05$, hence we can drop the latter.
\\
\par
Finally we come to the situation we are actually interested in, i.e. the case of $1$-forms $\alpha$. In this case, the natural norm we are interested in is
\begin{equation}\label{normforms}
\|\alpha\|^2_{L^2_1}=\|\alpha\|_{L^2}^2+\|\nabla\alpha\|_{L^2}^2.
\end{equation}
\begin{thm}
For every $1$-form $\alpha\in L^2_1$, the inequality
\begin{equation*}
\|\alpha\|^2_{L^2_1}\geq \frac{\min(\mathfrak{S}_Y^{2/3}/8,1/1.05)}{2^{8/3}}\|\alpha\|_{L^6}^2
\end{equation*}
holds.
\end{thm}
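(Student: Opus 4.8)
The plan is to reduce to the scalar estimate already established in Corollary \ref{sobfun}, applying it to the function $f=|\alpha|$ and using the Kato inequality to transfer the $L^2_1$ bound from $\alpha$ to $|\alpha|$. Since smooth $1$-forms are dense in $L^2_1$, it suffices to treat smooth $\alpha$ and pass to the limit.

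The first step is to record the Kato inequality. Wherever $\alpha\neq0$, differentiating $|\alpha|^2=\langle\alpha,\alpha\rangle$ gives $2|\alpha|\,d|\alpha| = 2\langle\nabla\alpha,\alpha\rangle$, so by Cauchy--Schwarz $|d|\alpha||\leq|\nabla\alpha|$ pointwise on $\{\alpha\neq0\}$. To make sense of $|\alpha|$ as an element of $L^2_1$ across the zero set — where $|\alpha|$ need not be differentiable — I would use the standard regularization $u_\varepsilon=\sqrt{|\alpha|^2+\varepsilon^2}$, which is smooth, satisfies $|du_\varepsilon|\leq|\nabla\alpha|$ everywhere (the same computation, now with no division by zero), and converges to $|\alpha|$ in $L^2$ with $\{du_\varepsilon\}$ bounded in $L^2$. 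A weak-compactness argument then shows $|\alpha|\in L^2_1$ with $\|d|\alpha|\|_{L^2}\leq\|\nabla\alpha\|_{L^2}$.

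With this in hand the conclusion is immediate. Since $\| |\alpha| \|_{L^2}=\|\alpha\|_{L^2}$ and $\|d|\alpha|\|_{L^2}\leq\|\nabla\alpha\|_{L^2}$, the definition (\ref{normforms}) gives $\| |\alpha| \|^2_{L^2_1}\leq\|\alpha\|^2_{L^2_1}$; and since $\| |\alpha| \|_{L^6}=\|\alpha\|_{L^6}$, applying Corollary \ref{sobfun} to $f=|\alpha|$ yields
\begin{equation*}
\|\alpha\|^2_{L^2_1}\geq \| |\alpha| \|^2_{L^2_1}\geq \frac{\min(\mathfrak{S}_Y^{2/3}/8,1/1.05)}{2^{8/3}}\,\| |\alpha| \|_{L^6}^2=\frac{\min(\mathfrak{S}_Y^{2/3}/8,1/1.05)}{2^{8/3}}\,\|\alpha\|_{L^6}^2.
\end{equation*}

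The only genuine subtlety — and the step I would be most careful with — is the regularity of $|\alpha|$ at the zero set of $\alpha$ together with the justification that the Kato inequality survives there in the weak sense; the $u_\varepsilon$ argument above handles this while keeping the discussion self-contained. One could alternatively cite a general form of Kato's inequality for sections of a vector bundle equipped with a metric connection, but this is not essential. Everything else is a direct substitution into the scalar inequality of Corollary \ref{sobfun}.
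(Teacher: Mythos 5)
Your argument is correct and follows the same route as the paper: reduce to the scalar inequality of Corollary \ref{sobfun} applied to $f=|\alpha|$ via the Kato inequality $|d|\alpha||\leq|\nabla\alpha|$. The only difference is cosmetic — you handle the zero set of $\alpha$ by the regularization $u_\varepsilon=\sqrt{|\alpha|^2+\varepsilon^2}$, whereas the paper first treats nowhere-vanishing forms and then appeals to density of forms with non-degenerate zeroes; your version is, if anything, the more self-contained of the two.
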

\begin{proof}
Denote by $C$ the constant appearing on the right hand side of Corollary \ref{sobfun}. The key observation is that by Kato inequality
\begin{equation}\label{kato}
|d|\alpha||\leq |\nabla \alpha|\text{ where }\alpha\neq0
\end{equation}
one obtains assuming $\alpha\neq 0$ everywhere that
\begin{equation*}
C\|\alpha\|^2_{L^6}=C\||\alpha|\|^2_{L^6}\leq \||\alpha|\|_{L^2}^2+\|d|\alpha|\|_{L^2}^2\leq  \|\alpha\|_{L^2}^2+\|\nabla\alpha\|_{L^2}^2=\|\alpha\|_{L^2_1}^2
\end{equation*}
where we used the inequality of Corollary \ref{sobfun} for the function $|\alpha|$. The argument easily generalizes to show that the inequality holds for example forms with non-degenerate zeroes, hence one can conclude by density as the latter are dense in the $C^{\infty}$ topology.
\end{proof}

\vspace{0.5cm}
\section{An explicit estimate in elliptic regularity}\label{explicitellip}

In our approach (outlined in Section \ref{SWreview}), we are interested in providing for an imaginary valued coexact $1$-form $b$ explicit estimates for its $L^6$ norm in terms of the $L^2$ norm of $db$. Such an estimate exists because the operator $d+d^*$ is elliptic: in particular, for every $1$-form $\alpha$ there is an estimate of the form
\begin{equation*}
\|\alpha\|_{L^2_1}\leq C(\|(d+d^*)\alpha\|_{L^2}+\|\alpha\|_{L^2}),
\end{equation*}
where the final term can be dropped when $d+d^*$ has no kernel, i.e. $b_1(Y)=0$. As in our case $d^*b=0$, this implies the inequality we are aiming at with an implicit constant because of the Sobolev embedding $L^2_1\hookrightarrow L^6$. In this present section, we will make the estimate explicit for a hyperbolic $Y$ in terms of its geometry.
\par
To start, in the case of a hyperbolic manifold the classical Bochner formula \cite{Bes} says that on $1$-forms
\begin{equation}\label{boch}
(d+d^*)^2=\nabla^*\nabla-2,
\end{equation}
where $-2$ is the Ricci curvature of a hyperbolic three-manifold. Integrating by parts we for a coexact $1$-form $b$ the identity
\begin{equation}\label{normcoexact}
\|b\|_{L^2_1}^2=3\|b\|_{L^2}^2+\|d b\|_{L^2}^2
\end{equation}
where we use the $L^2_1$-norm of $1$-forms defined in (\ref{normforms}).
\par
Consider now $\lambda_1^*$, the first eigenvalue of the Hodge Laplacian acting on coexact $1$-forms. This can be characterized in a variational way in terms of Rayleigh quotients as
\begin{equation*}
\lambda^*_1=\inf\frac{\|d b\|_{L^2}^2}{\|b\|_{L^2}^2},
\end{equation*}
the infimum being taken over coexact $1$-forms. In particular we have from (\ref{normcoexact}) that
\begin{equation*}
\|b\|_{L^2_1}^2\leq \left(1+\frac{3}{\lambda_1^*}\right) \|db\|_{L^2}^2.
\end{equation*}
This inequality is of course sharp in the case of a $\lambda_1^*$-eigenform.
\begin{remark}
When acting on coexact $1$-forms, the Hodge Laplacian is the square of the self-adjoint first order operator $\ast d$ which is relevant to our story; the latter has spectrum unbounded in both direction and $\lambda^*_1$ is the square of the eigenvalue of $\ast d$ with least absolute value.
\end{remark}
The results of the previous sections then give us bounds on the $L^6$ norm in terms of the Sobolev constant $\mathfrak{S}_Y$ of $Y$. Of course, the latter is still defined in analytical terms, but it turns out that one can provide explicit lower bounds in terms of the volume and injectivity radius of $Y$ as follows.
\par
First of all, recall that the inequality $\mathfrak{S}_Y\geq \mathfrak{I}_Y$ holds, where $\mathfrak{I}_Y$ is the \textit{isoperimetric constant} of $M$ \cite[Section IV.5]{Cha}: this is the infimum of the quantity
\begin{equation*}
\frac{A(N)^3}{\min(V(M_1),V(M_2))^2} 
\end{equation*}
over all hypersurfaces $N$ diving $Y$ in two pieces $M_1$, $M_2$.
\begin{remark}
One should not confuse $\mathfrak{I}_Y$ with the \textit{Cheeger constant} of $M$, which is the infimum of the quantity
\begin{equation*}
\frac{A(N)}{\min(V(M_1),V(M_2))} 
\end{equation*}
and closely related to the first eigenvalue of the Laplacian on functions on $M$.
\end{remark}
The isoperimetric constant $\mathfrak{I}_Y$ is an easier quantity to study as it can be bounded more directly using comparison techniques in Riemannian geometry. In particular, the following result holds.
\begin{thm}[\cite{Cha}, Chapter $V.3$]
Consider a closed hyperbolic three-manifold (or more generally any manifold with Ricci curvature $\geq-2$). Then 
\begin{equation*}
\mathfrak{I}_Y\geq \frac{1}{64\pi^5}\left(\frac{\mathrm{vol}(Y)}{\cosh(d(Y))-1}\right)^4
\end{equation*}
where $d(Y)$ denotes the diameter of $M$, i.e. the maximum Riemannian distance between two points on the manifold.
\end{thm}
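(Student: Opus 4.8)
We recall the structure of the proof from \cite[Chapter V.3]{Cha}. The engine is Bishop--Gromov volume comparison: since the Ricci curvature of $Y$ is $\geq -2$, for every $p\in Y$ the function $r\mapsto V(B(p,r))/V_{-1}(r)$ is non-increasing, where $V_{-1}(r)=\pi(\sinh 2r-2r)$ is the volume of an $r$-ball in $\mathbb{H}^{3}$. This gives at once the upper bound $V(B(p,r))\leq V_{-1}(r)$, and, on evaluating the monotone ratio at $r=d(Y)$ (so that $B(p,d(Y))=Y$), the matching lower bound $V(B(p,r))\geq \frac{\mathrm{vol}(Y)}{V_{-1}(d(Y))}\,V_{-1}(r)$ for all $0<r\leq d(Y)$. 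Crude elementary estimates on the model functions over $[0,d(Y)]$ --- bounding $V_{-1}(r)$ below by its Euclidean principal part $\tfrac{4\pi}{3}r^{3}$, and $V_{-1}(d(Y))$ above by a multiple of $(\cosh d(Y)-1)^{2}$ using $\sinh^{2}t\leq \sinh t\,\sinh d(Y)$ --- make both bounds completely explicit in terms of $\mathrm{vol}(Y)$ and $\cosh d(Y)-1$.

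These two-sided ball-volume bounds are precisely what the standard derivation of the isoperimetric inequality requires. Fix a dividing hypersurface $N$ and write $V(M_1)\leq V(M_2)$. When $V(M_1)$ is below an explicit threshold, a Vitali-type covering of $M_1$ by metric balls, together with a comparison of the relative isoperimetric profile of such a ball with that of the hyperbolic model, yields a Euclidean-type inequality $A(N)\geq c\,V(M_1)^{2/3}$; this step is responsible for the isoperimetric exponent $2/3=(n-1)/n$. For the remaining values of $V(M_1)$ one uses the companion Cheeger-type estimate of Gromov: every minimizing geodesic from $M_1$ to $M_2$ meets $N$, and parametrizing pairs of points by their connecting geodesics while bounding the geodesic-flow Jacobians by $\sinh^{2}t$ on $[0,d(Y)]$ gives $A(N)\geq h\cdot\min(V(M_1),V(M_2))=h\,V(M_1)$ with $h$ explicit; since then $V(M_1)$ is bounded below, this again reads $A(N)\geq c'\,V(M_1)^{2/3}$. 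In either regime $\mathfrak{I}_Y=A(N)^{3}/V(M_1)^{2}\geq \min(c,c')^{3}$, and a suitable choice of the localization scale and of the intermediate estimates --- arranged so that the diameter enters only through $\cosh d(Y)-1$ --- yields the stated value $\frac{1}{64\pi^{5}}\big(\mathrm{vol}(Y)/(\cosh d(Y)-1)\big)^{4}$.

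The main obstacle is the honest bookkeeping of constants, not any individual inequality: one has to fix the localization scale and run the covering argument with every constant tracked, and force the diameter to enter only through $\cosh d(Y)-1$ rather than through $\sinh d(Y)$ or $V_{-1}(d(Y))$ in isolation --- this is what dictates the particular crude estimates above and pins down the exponent $4$ and the factor $64\pi^{5}$. It is also worth stressing why a genuine reduction to balls is needed: the naive heuristic that ``$M_1$ sits in a thin tube around $N$, and therefore has small boundary'' does not by itself close the argument, since the volume of such a tube is not controlled by $A(N)$ without an a priori bound on the mean curvature of $N$.
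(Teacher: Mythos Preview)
The paper does not give its own proof of this statement: it is quoted verbatim as a result from \cite[Chapter V.3]{Cha} and used as a black box. So there is nothing in the paper to compare your proposal against.

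As a sketch of the argument in the cited reference, your outline is in the right spirit but blends two distinct strategies. The proof in Chavel (due to Croke) is a single geodesic-flow argument based on Santal\'o's formula: for $p\in M_1$, almost every unit-speed geodesic from $p$ meets $N$ at some time $\leq d(Y)$, and integrating over the unit sphere bundle with Liouville measure converts $V(M_1)$ into an integral over $N$ weighted by comparison Jacobians, yielding $A(N)\geq c(V,d)\,V(M_1)^{2/3}$ directly. There is no separate Vitali-covering regime for small $V(M_1)$ and no case split; the Bishop--Gromov input enters only through the Jacobian bound along geodesics, not through two-sided ball-volume estimates. Your description reads more like the Gallot--B\'erard--Besson or Gromov approaches to related inequalities. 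The constant-tracking you emphasize is indeed the main labor, but the mechanism producing it is simpler than you suggest.
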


In the case of a closed hyperbolic three-manifold, we can always assume $\mathrm{vol}(Y)\geq 0.94$ by \cite{Mil}, and obtain the following.
\begin{cor}
Consider a closed hyperbolic three-manifold . Then for the Sobolev constant $\mathfrak{S}_Y$ the inequality
\begin{equation*}
\mathfrak{S}_Y\geq \frac{4}{10^5}\frac{1}{(\cosh(d(Y))-1)^4}
\end{equation*}
holds.
\end{cor}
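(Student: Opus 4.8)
The plan is to simply concatenate the three facts assembled just above, in order. First I would record the inequality $\mathfrak{S}_Y \geq \mathfrak{I}_Y$ between the Sobolev constant and the isoperimetric constant, which is \cite[Section IV.5]{Cha}. Next, since a closed hyperbolic three-manifold has constant Ricci curvature equal to $-2$, Chavel's comparison estimate \cite[Chapter V.3]{Cha} stated immediately above applies verbatim, giving
\[
\mathfrak{S}_Y \;\geq\; \mathfrak{I}_Y \;\geq\; \frac{1}{64\pi^5}\left(\frac{\mathrm{vol}(Y)}{\cosh(d(Y))-1}\right)^{4}.
\]
Finally, the right-hand side is monotone increasing in $\mathrm{vol}(Y)$ for fixed $d(Y)$, so I would invoke the universal lower bound on the volume of a closed hyperbolic three-manifold \cite{Mil} to replace $\mathrm{vol}(Y)$ by the constant $0.94$ (as in the derivation of the previous corollary), leaving a bound depending only on the diameter:
\[
\mathfrak{S}_Y \;\geq\; \frac{(0.94)^4}{64\pi^5}\cdot\frac{1}{(\cosh(d(Y))-1)^{4}}.
\]

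All that then remains is an elementary numerical estimate for the coefficient: using $\pi^5 < 306.02$ one checks that $(0.94)^4/(64\pi^5)$ is essentially $4\cdot 10^{-5}$, and if one wants a comfortable margin one substitutes the sharp value of the minimal volume, $\mathrm{vol}(Y)\geq 0.9427$ (realized by the Weeks manifold), which makes $(0.9427)^4/(64\pi^5) > 4\cdot 10^{-5}$; rounding down conservatively yields the stated constant $4/10^5$. There is no genuine obstacle in this corollary — it is purely a repackaging of \cite[Chapters IV.5 and V.3]{Cha} together with the a priori lower bound on hyperbolic volume — so the only point requiring any care is to keep the rounding of the dimensional constant on the safe side.
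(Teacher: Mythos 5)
Your proof is correct and is exactly the paper's argument: the corollary is obtained by chaining $\mathfrak{S}_Y\geq\mathfrak{I}_Y$ with Chavel's comparison bound for the isoperimetric constant and the universal lower bound on hyperbolic volume. Your numerical caution is in fact warranted: with $\mathrm{vol}(Y)\geq 0.94$ one gets $(0.94)^4/(64\pi^5)\approx 3.99\times 10^{-5}$, which falls just short of $4/10^5$, so the sharper minimal-volume value $\approx 0.9427$ (the Weeks manifold) is genuinely needed to justify the stated constant --- a small point the paper glosses over and you handle correctly.
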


Finally, we can give an explicit upper bound on the diameter of a hyperbolic three-manifold in terms of volume and injectivity radius as follows.
\begin{thm}[\cite{Mey}]\label{diamineq}
For a closed hyperbolic three-manifold, the inequality
\begin{equation*}
d(Y)\leq \frac{\mathrm{vol(Y)}}{\pi\sinh^2(\mathrm{inj(Y)}/2)}
\end{equation*}
holds.
\end{thm}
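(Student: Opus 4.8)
The plan is to reproduce Meyerhoff's embedded-tube argument. Since $Y$ is closed, the diameter is attained at some pair $p,q\in Y$, and by Hopf--Rinow there is a unit-speed minimizing geodesic $\gamma\colon[0,d(Y)]\to Y$ with $\gamma(0)=p$, $\gamma(d(Y))=q$; being minimizing, $\gamma$ is injective and the distance between two of its points equals their parameter difference. Set $\rho=\mathrm{inj}(Y)/2$. I would consider the normal exponential map
\[
F\colon\bigl\{(x,v):x\in\gamma,\ v\perp\dot\gamma(x),\ |v|\le\rho\bigr\}\longrightarrow Y,\qquad F(x,v)=\exp_x(v),
\]
and show that it is injective. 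Granting this, its image is an embedded solid tube of radius $\rho$ around a geodesic segment of length $d(Y)$; since $Y$ has constant curvature $-1$ it is locally isometric to $\mathbb{H}^3$, so the volume of this tube can be computed in Fermi coordinates about the axis, where the metric takes the form $\cosh^2 r\,dt^2+dr^2+\sinh^2 r\,d\theta^2$ and the volume form is $\cosh r\,\sinh r\,dt\,dr\,d\theta$. Integrating over $t\in[0,d(Y)]$, $r\in[0,\rho]$, $\theta\in[0,2\pi]$ gives $\pi\,d(Y)\sinh^2\rho=\pi\,d(Y)\sinh^2(\mathrm{inj}(Y)/2)$ for the volume of the image, which is at most $\mathrm{vol}(Y)$; rearranging yields the claimed inequality.

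To prove injectivity of $F$, suppose $\exp_{x_1}(v_1)=\exp_{x_2}(v_2)=:y$ with $x_i=\gamma(s_i)$, $v_i\perp\dot\gamma(s_i)$ and $|v_i|\le\rho$. Then $d(x_1,y),d(x_2,y)\le\rho$, so $d(x_1,x_2)\le 2\rho=\mathrm{inj}(Y)$; since $\gamma$ is minimizing, $|s_1-s_2|=d(x_1,x_2)$, and the point $m=\gamma\bigl((s_1+s_2)/2\bigr)$ satisfies $d(m,x_i)\le\rho$, hence $d(m,y)\le 2\rho=\mathrm{inj}(Y)$. Thus $x_1$, $x_2$, $y$, the subarc of $\gamma$ between $x_1$ and $x_2$, and the two short geodesics from $y$ to $x_1$ and to $x_2$ all lie in the embedded metric ball of radius $\mathrm{inj}(Y)$ about $m$. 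Pulling everything back through $\exp_m^{-1}$ to $\mathbb{H}^3$ (a local isometry, hence carrying geodesics to geodesics and preserving orthogonality), one gets a geodesic line $\tilde\gamma$ through $\tilde x_1,\tilde x_2$ and a point $\tilde y$ joined to each $\tilde x_i$ by a geodesic orthogonal to $\tilde\gamma$ --- this is exactly where the hypothesis $v_i\perp\dot\gamma$ enters. Since the nearest-point projection of $\tilde y$ onto the geodesic $\tilde\gamma$ in $\mathbb{H}^3$ is unique, $\tilde x_1=\tilde x_2$, so $x_1=x_2$; and then $v_1=v_2$, because $\exp_{x_1}$ is injective on the ball of radius $\mathrm{inj}(Y)>\rho$. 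The few boundary cases, where some triangle inequality above is an equality so that a point lies at distance exactly $\mathrm{inj}(Y)$ from $m$, are dealt with by running the argument for $\rho'<\rho$ and letting $\rho'\uparrow\rho$ (equivalently, by taking $d(Y)$ to be a supremum of tube lengths).

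I expect the injectivity claim to be the only real obstacle: one has to calibrate the tube radius to precisely $\mathrm{inj}(Y)/2$ so that the whole configuration $\{x_1,x_2,y,\dots\}$ fits inside a single embedded ball and can be transported isometrically to $\mathbb{H}^3$, where uniqueness of perpendicular feet forces $x_1=x_2$. Once embeddedness is established, the volume computation in Fermi coordinates is routine, and the passage from ``the volume of an embedded subset of $Y$'' to the stated bound is immediate.
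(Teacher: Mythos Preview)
Your argument is correct and matches the approach the paper indicates: the paper does not give a proof but simply remarks that the result ``is proved by explicitly studying the tube around the shortest geodesics connecting two points in $Y$ realizing the diameter of $Y$,'' which is precisely the embedded-tube-plus-Fermi-volume computation you carry out. The injectivity step via uniqueness of perpendicular feet in $\mathbb{H}^3$, together with your limiting fix $\rho'\uparrow\rho$ for the boundary cases, is the standard way to justify the embedding, and the volume integral $\int_0^{d(Y)}\int_0^{2\pi}\int_0^{\rho}\cosh r\,\sinh r\,dr\,d\theta\,dt=\pi\,d(Y)\sinh^2\rho$ is correct.
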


This result is proved by explicitly studying the tube around the shortest geodesics connecting two points in $Y$ realizing the diameter of $Y$.
\\
\par
Putting everything together, we can finally prove the following explicit bound for the $L^6$ norm of a coexact $1$-form $b$ in terms of the $L^2$ norm of $db$.

\begin{prop}
Consider $V,\varepsilon,\delta>0$. Consider a hyperbolic rational homology sphere $Y$ with $\mathrm{vol}\leq V$, $\mathrm{inj}\geq\varepsilon$ and $\lambda_1^*\geq\delta$. Then for every coexact $1$-form $b$ on $Y$, the inequality
\begin{equation*}
\|b\|^2_{L^6}\leq e^{11}\cdot \left[\cosh\left(\frac{V}{\pi\sinh^2(\varepsilon/2)}\right)-1\right]^{8/3}\cdot\left(1+\frac{3}{\delta}\right) \|db\|_{L^2}^2.
\end{equation*}
holds.
\end{prop}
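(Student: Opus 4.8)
The plan is to chain together, while tracking all dimensional constants, the three ingredients developed above: the Kato-refined Sobolev inequality for $1$-forms, the Bochner identity (\ref{normcoexact}) controlled by $\lambda_1^*$, and the explicit lower bound for the Sobolev constant $\mathfrak{S}_Y$ coming from the isoperimetric constant and the diameter bound.

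First I would apply the Sobolev inequality for $1$-forms proved above (the Kato refinement of Corollary \ref{sobfun}) to $\alpha=b$, which after inverting reads
\[
\|b\|^2_{L^6}\leq \frac{2^{8/3}}{\min(\mathfrak{S}_Y^{2/3}/8,\ 1/1.05)}\,\|b\|^2_{L^2_1}.
\]
Since $b$ is coexact, the Bochner identity (\ref{normcoexact}) together with the variational bound $\|db\|^2_{L^2}\geq\lambda_1^*\|b\|^2_{L^2}$ and the hypothesis $\lambda_1^*\geq\delta$ gives $\|b\|^2_{L^2_1}\leq(1+3/\delta)\|db\|^2_{L^2}$, hence
\[
\|b\|^2_{L^6}\leq \frac{2^{8/3}}{\min(\mathfrak{S}_Y^{2/3}/8,\ 1/1.05)}\left(1+\frac{3}{\delta}\right)\|db\|^2_{L^2}.
\]
Then I would bound the prefactor: from $\mathfrak{S}_Y\geq\mathfrak{I}_Y$, the isoperimetric estimate of \cite{Cha}, Chapter $V.3$, and $\mathrm{vol}(Y)\geq 0.94$ (\cite{Mil}), one obtains $\mathfrak{S}_Y\geq \frac{4}{10^5}(\cosh(d(Y))-1)^{-4}$, so $\mathfrak{S}_Y^{2/3}/8\geq \frac{1}{8}\left(\frac{4}{10^5}\right)^{2/3}(\cosh(d(Y))-1)^{-8/3}$; as observed after Corollary \ref{sobfun}, this explicit lower bound is much smaller than $1/1.05$, so the minimum in the denominator collapses to it. Substituting, then using Theorem \ref{diamineq}, $d(Y)\leq\mathrm{vol}(Y)/(\pi\sinh^2(\mathrm{inj}(Y)/2))$, together with the monotonicity of $\cosh$ and $\sinh^2$ and the hypotheses $\mathrm{vol}(Y)\leq V$, $\mathrm{inj}(Y)\geq\varepsilon$, gives
\[
\|b\|^2_{L^6}\leq 8\cdot 2^{8/3}\cdot\left(\frac{10^5}{4}\right)^{2/3}\left[\cosh\!\left(\frac{V}{\pi\sinh^2(\varepsilon/2)}\right)-1\right]^{8/3}\left(1+\frac{3}{\delta}\right)\|db\|^2_{L^2},
\]
and the statement follows from the numerical inequality $8\cdot 2^{8/3}\cdot(10^5/4)^{2/3}<e^{11}$.

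There is no analytic obstacle: every ingredient is already in place, and the whole argument is bookkeeping. The two points that require genuine care are verifying that the explicit lower bound for $\mathfrak{S}_Y^{2/3}/8$ indeed lies below $1/1.05$ so that the minimum collapses (which uses that a closed hyperbolic $3$-manifold has diameter bounded below away from $0$, e.g.\ via the volume comparison $0.94\leq\mathrm{vol}(Y)\leq\pi(\sinh(2d(Y))-2d(Y))$), and carrying out the final numerical comparison $8\cdot 2^{8/3}\cdot(10^5/4)^{2/3}<e^{11}$.
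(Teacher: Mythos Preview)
Your proposal is correct and follows exactly the paper's approach: the paper simply writes ``Putting everything together'' for this proposition, meaning precisely the chain you describe---the $1$-form Sobolev inequality, the Bochner identity with the $\lambda_1^*$ bound, the isoperimetric lower bound for $\mathfrak{S}_Y$, and the diameter bound---and your write-up supplies the bookkeeping (including the collapse of the minimum and the numerical check $8\cdot 2^{8/3}\cdot(10^5/4)^{2/3}<e^{11}$) that the paper leaves implicit.
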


To conclude, for our purposes we need an estimate for the embedding constant $L^2_1\hookrightarrow L^4$. This is readily obtained from the one for $L^2_1\hookrightarrow L^6$ via the inequality
\begin{equation*}
\|b\|_{L^4}\leq \mathrm{vol}(Y)^{1/12}\cdot \|b\|_{L^6},
\end{equation*}
which is a direct consequence of H\"older's inequality. We obtain.
\begin{cor}\label{coexsob}
Consider $V,\varepsilon,\delta>0$. Consider a hyperbolic rational homology sphere $Y$ with $\mathrm{vol}\leq V$, $\mathrm{inj}\geq\varepsilon$ and $\lambda_1^*\geq\delta$. Then for every coexact $1$-form $b$ on $Y$, the inequality
\begin{equation*}
\|b\|_{L^4}\leq \mathfrak{c}_{V,\varepsilon}\cdot \left(1+\frac{3}{\delta}\right)^{1/2}\cdot \|db\|_{L^2}.
\end{equation*}
holds, where
\begin{equation*}
\mathfrak{c}_{V,\varepsilon}=e^{11/2}\cdot V^{1/12}\cdot \left[\cosh\left(\frac{V}{\pi\sinh^2(\varepsilon/2)}\right)-1\right]^{4/3}
\end{equation*}
is a constant depending only on $V$ and $\varepsilon$.
\end{cor}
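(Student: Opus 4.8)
The plan is to deduce this immediately from the Proposition just proved (the explicit $L^6$ bound for coexact $1$-forms) together with H\"older's inequality on the compact manifold $Y$; no new analytic ingredient is needed.

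First I would recall that Proposition: under the hypotheses $\mathrm{vol}(Y)\le V$, $\mathrm{inj}(Y)\ge\varepsilon$, $\lambda_1^*(Y)\ge\delta$, every coexact $1$-form $b$ satisfies
\begin{equation*}
\|b\|^2_{L^6}\leq e^{11}\cdot \left[\cosh\left(\frac{V}{\pi\sinh^2(\varepsilon/2)}\right)-1\right]^{8/3}\cdot\left(1+\frac{3}{\delta}\right)\|db\|_{L^2}^2 ,
\end{equation*}
so that, taking square roots, $\|b\|_{L^6}\le e^{11/2}\big[\cosh(V/(\pi\sinh^2(\varepsilon/2)))-1\big]^{4/3}(1+3/\delta)^{1/2}\|db\|_{L^2}$. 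Next I would interpolate $L^4$ against $L^6$ by H\"older's inequality: since $Y$ has finite total volume and $4<6$, with $\tfrac14-\tfrac16=\tfrac1{12}$ one obtains $\|b\|_{L^4}\le \mathrm{vol}(Y)^{1/12}\,\|b\|_{L^6}$, which combined with $\mathrm{vol}(Y)\le V$ gives $\|b\|_{L^4}\le V^{1/12}\|b\|_{L^6}$. Chaining the two estimates produces exactly the asserted inequality with
\begin{equation*}
\mathfrak{c}_{V,\varepsilon}=e^{11/2}\cdot V^{1/12}\cdot\left[\cosh\left(\frac{V}{\pi\sinh^2(\varepsilon/2)}\right)-1\right]^{4/3}.
\end{equation*}

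There is essentially no obstacle in this final step itself: all the substantive work has already been assembled in the Proposition, which rests in turn on the Bochner identity (\ref{normcoexact}), the variational characterization of $\lambda_1^*$, the comparison chain $\mathfrak{S}_Y\ge\mathfrak{I}_Y$ together with the explicit lower bound for $\mathfrak{I}_Y$ in terms of $\mathrm{vol}(Y)$ and $d(Y)$, and the diameter bound of Theorem \ref{diamineq}. The only thing to watch is the exponent bookkeeping — confirming that the power of the volume coming out of H\"older is precisely $1/12$ so that the factor $V^{1/12}$ matches the stated $\mathfrak{c}_{V,\varepsilon}$, and that the square root correctly halves the exponents $11$ and $8/3$ and the $(1+3/\delta)$ factor in the Proposition.
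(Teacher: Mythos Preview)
Your proposal is correct and matches the paper's own argument essentially verbatim: the paper simply cites the preceding $L^6$ Proposition and then applies the H\"older inequality $\|b\|_{L^4}\le \mathrm{vol}(Y)^{1/12}\|b\|_{L^6}$ to obtain the stated bound with $\mathfrak{c}_{V,\varepsilon}$.
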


While we have worked with $1$-forms so far, we will also need parts of this results for the case of spinors, in particular the constant of the embedding $L^2_1\hookrightarrow L^4$. In this case we consider the fixed base connection $B_{\bullet}$ with $B_{\bullet}^t$ flat, and the natural norm
\begin{equation*}
\|\Psi\|^2_{L^2_1}:=\|\Psi\|^2_{L^2}+\|\nabla_{B_{\bullet}}\Psi\|^2_{L^2}.
\end{equation*} 
All of our arguments (and in particular the Kato inequality (\ref{kato})) hold in this setup, and we obtain
\begin{cor}\label{diracsob}
Consider $V,\varepsilon,\delta>0$. Consider a hyperbolic rational homology sphere $Y$ with $\mathrm{vol}\leq V$, $\mathrm{inj}\geq\varepsilon$. Then for every section $\Psi$ of the spinor bundle $S\rightarrow Y$ the inequality
\begin{equation*}
\|\Psi\|_{L^4}\leq \mathfrak{c}_{V,\varepsilon}\cdot \|\Psi\|_{L^2_1}
\end{equation*}
holds.
\end{cor}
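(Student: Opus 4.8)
The plan is to repeat, almost verbatim, the chain of estimates of Section~\ref{sobolev} together with the derivation of Corollary~\ref{coexsob}, replacing throughout the Levi--Civita connection on $1$-forms by the fixed reference spin$^c$ connection $\nabla_{B_{\bullet}}$ (with $B_{\bullet}^t$ flat). The only inputs that must be rechecked in this setting are the Kato inequality and the density argument, and both go through unchanged; no new ingredient is needed. In particular, in contrast with the coexact $1$-form case there is no Bochner step and no appeal to $\lambda_1^*$, since here we bound $\|\Psi\|_{L^4}$ directly in terms of the full $L^2_1$-norm of $\Psi$ rather than in terms of a first-order quantity like $\|D\Psi\|_{L^2}$ — this is why the hypothesis on $\lambda_1^*$ does not appear in the statement and why the final constant is $\mathfrak{c}_{V,\varepsilon}$ rather than $\mathfrak{c}_{V,\varepsilon}(1+3/\delta)^{1/2}$.

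First I would record the Kato inequality in this twisted setting: since $B_{\bullet}$ is a unitary connection one has pointwise $d|\Psi|^2=2\,\mathrm{Re}\langle\nabla_{B_{\bullet}}\Psi,\Psi\rangle$, hence $|d|\Psi||\le|\nabla_{B_{\bullet}}\Psi|$ wherever $\Psi\neq0$, which is precisely the analogue of~(\ref{kato}). Assuming first that $\Psi$ vanishes only at non-degenerate zeros — a condition dense in the $C^{\infty}$ topology, so that the general case follows by density exactly as in the proof of the Theorem closing Section~\ref{sobolev} — the function $|\Psi|$ lies in $L^2_1$, and applying Corollary~\ref{sobfun} to it yields, with $C=\min(\mathfrak{S}_Y^{2/3}/8,\,1/1.05)/2^{8/3}$,
\begin{equation*}
C\,\|\Psi\|_{L^6}^2=C\,\||\Psi|\|_{L^6}^2\le\||\Psi|\|_{L^2}^2+\|d|\Psi|\|_{L^2}^2\le\|\Psi\|_{L^2}^2+\|\nabla_{B_{\bullet}}\Psi\|_{L^2}^2=\|\Psi\|_{L^2_1}^2 .
\end{equation*}

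It then remains to pass from $L^6$ to $L^4$ and to make $C$ explicit. H\"older's inequality with exponents $3/2$ and $3$ gives $\|\Psi\|_{L^4}\le\mathrm{vol}(Y)^{1/12}\|\Psi\|_{L^6}$, and since the $\mathfrak{S}_Y$-term realizes the minimum defining $C$ (as noted after Corollary~\ref{sobfun}) one has $C^{-1/2}=\sqrt8\cdot2^{4/3}\cdot\mathfrak{S}_Y^{-1/3}$. Inserting the lower bound $\mathfrak{S}_Y\ge\frac{4}{10^5}\frac{1}{(\cosh(d(Y))-1)^4}$ established in Section~\ref{explicitellip}, together with $d(Y)\le V/(\pi\sinh^2(\varepsilon/2))$ from Theorem~\ref{diamineq} (using $\mathrm{vol}(Y)\le V$, $\mathrm{inj}(Y)\ge\varepsilon$ and the monotonicity of $\cosh$) and the elementary numerical inequality $\sqrt8\cdot2^{4/3}\cdot(10^5/4)^{1/3}\le e^{11/2}$, one obtains $\mathrm{vol}(Y)^{1/12}C^{-1/2}\le\mathfrak{c}_{V,\varepsilon}$, hence the claim. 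I expect no genuine obstacle here: the Kato inequality for $\nabla_{B_{\bullet}}$ is formal, the density argument is the one already used for $1$-forms, and the remaining work is bookkeeping of dimensional constants. The only point deserving a moment's care is the arithmetic check that all these constants can be absorbed into the single factor $e^{11/2}$, so that the very same constant $\mathfrak{c}_{V,\varepsilon}$ serves here as in Corollary~\ref{coexsob}.
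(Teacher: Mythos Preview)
Your proposal is correct and follows exactly the approach indicated in the paper, which simply remarks that all the arguments of Section~\ref{sobolev} (in particular the Kato inequality) carry over to spinors equipped with the connection $\nabla_{B_{\bullet}}$. In fact you have supplied more detail than the paper does, including the explicit verification that the numerical constants can be absorbed into $e^{11/2}$ so that the same $\mathfrak{c}_{V,\varepsilon}$ works.
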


\vspace{0.5cm}
\section{Bounds on the spectrum under perturbation}\label{functional}

We now discuss some results about the spectral flow of a family of operators of the form $T+sA$ for $s\in[0,1]$. More specifically, we are interested in the case of the family of extended Hessians (\ref{exthess}) in which case
\begin{equation*}
T:=\EHess\mathcal{L}_{(B_{\bullet},0)}=\begin{bmatrix}
0 & -d^*&0 \\
-d&\ast d&0\\
0&0&D_{B_{\bullet}}
\end{bmatrix}
\end{equation*}
is a first order elliptic self-adjoint operator acting on $i\Omega^0\oplus i\Omega^1\oplus \Gamma(S)$, and $A$ is a symmetric perturbation term.
\par
We will denote by $H$ and $H_1$ the $L^2$ and $L^2_1$ completions of the underlying space respectively; notice that $T+sA: H\rightarrow H$ is an unbounded operator diagonalizable for every $s$, with discrete spectrum unbounded in both direction. It will convenient for our purposes to fix a suitable orthonormal basis $u_n$ of eigenvectors for $T$ in $H$ (with corresponding eigenvalue $t_n$) as follows. Notice that by the Hodge theorem $i\Omega^1=id\Omega^0\oplus id^*\Omega^2$, and the $2\times 2$ block in the upper left can be written as
\begin{equation*}
\begin{bmatrix}
0 & -d^*&0 \\
-d&0&0\\
0&0&\ast d
\end{bmatrix}
\end{equation*}
when acting on $i\Omega^0\oplus id\Omega^0\oplus id^*\Omega^2$. The operator $\ast d$ and $D_{B_{\bullet}}$ acting on coexact $1$-forms and spinors respectively are diagonalizable, and we can consider an orthonormal basis of eigenvectors for them.
\begin{remark}\label{diracR}
While $D_{B_{\bullet}}$ is a complex linear operator, we are interested in the spectral flow of $T+sA$ as a real family of operators hence we take a basis of eigenvectors over the reals.
\end{remark}
Now the block 
\begin{equation}\label{blockop}
\begin{bmatrix}
0 & -d^* \\
-d&0
\end{bmatrix}
\end{equation}
acting on $i\Omega^0\oplus id\Omega^0$ has an orthonormal basis consisting of eigenvectors of the form:
\begin{itemize}
\item $\begin{bmatrix}
0\\
1/\sqrt{\mathrm{vol}}
\end{bmatrix}$, with eigenvalue $0$;
\item $\frac{1}{\sqrt{2}}\begin{bmatrix}
\pm df/\sqrt{\lambda} \\
f
\end{bmatrix}$, where $f$ is a unit length $\lambda$-eigenfunction with $\lambda> 0$, corresponding to the eigenvalue $\pm\sqrt{\lambda}$.
\end{itemize}
Finally, our basis $\{u_n\}$ is obtained by taking the union of the bases for the three blocks we have discusses.
\\
\par
To understand the change in the spectrum, let us begin with the easier case in which $A$ is a symmetric bounded operator from $H$ to $H$; we will denote its norm by $\|A\|$. This is a quite standard situation and is treated for example in \cite{Kat} using the resolvent formalism. While not directly relevant for our applications, it will be instructive to discuss this case in detail.
\par
Recall that the \textit{resolvent set} of $T$ is defined to be the set of $z\in\mathbb{C}$ for which the resolvent operator
\begin{equation*}
R(T,z)=(T-z)^{-1}.
\end{equation*}
is a well defined bounded operator $H\rightarrow H$. In our situation, the resolvent set is exactly the complement of the eigenvalues. Indeed, very concretely, consider for $z\in\mathbb{C}$ the operator $T-z$, which acts on our basis as
\begin{equation*}
u_n\mapsto (t_n-z)u_n.
\end{equation*}
Assuming $z$ is not an eigenvalue of $T$, we have that $(T-z)^{-1}$ sends
\begin{equation*}
u_n\mapsto (t_n-z)^{-1}u_n.
\end{equation*}
hence is bounded with norm $\sup|t_n-z|^{-1}$.
\par
Now, if we choose an operator $B$ with norm
\begin{equation*}
\|B\|\cdot\sup|t_n-z|^{-1}<1,
\end{equation*}
the formal expression (also called second Neumann formula) for the resolvent of $T+B$ \cite[II.3]{Kat}
\begin{equation}\label{pertres}
R(T+B,z)=R(T,z)\cdot(1+B\cdot R(T,z))^{-1}
\end{equation}
makes sense as a bounded operator $H\rightarrow H$ because $\|B\cdot R(T,z)\|<1$, hence $z$ does not belong to $\mathrm{spec}(T+B)$ either. In particular, this means that if $z\in \mathrm{spec}(T+B)$, then there exists an eigenvalue $t_n$ of $T$ for which $|t_n-z|\leq \|B\|$.
\par
We can use this to see how the eigenvalues $T+sA$ evolve for $s\in[0,1]$. For example, if $t_*$ is an eigenvalue of multiplicity $m$ for $s_0$, and there are no eigenvalues in $(t_*-\varepsilon, t_*+\varepsilon)$, then there exactly $m$ eigenvalues of the operator at $s_0\pm \frac{\varepsilon}{2\|A\|}$ corresponding to the evolution of $t_*$ in the family. Furthermore, all of these $m$ eigenvalues are in the range $[t_*-\frac{\varepsilon}{2\|A\|}, t_*+\frac{\varepsilon}{2\|A\|}]$. This implies that if $t(s)$ is any continuous family of eigenvalues for $T+sA$, we have that $t(1)$ belongs to the interval $[t(0)-\|A\|,t(0)+\|A\|]$. Hence, eigenvalues $t_n$ with $|t_n|\leq \|A\|$ cannot contribute to the spectral flow of the family $\{T+sA\}$. We have proved.

\begin{prop}
In the case $A$ is bounded as an operator on $H$, the spectral flow of the family $\{T+sA\}$ for $s\in[0,1]$ is bounded by the number of eigenvalues with absolute value at most $\|A\|+1$.
\end{prop}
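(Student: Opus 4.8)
Since the paragraph just before the statement already contains the essential argument, the plan is simply to assemble it into a clean proof. First I would recall that, because $A$ is bounded and self-adjoint and $s\mapsto T+sA$ is affine in $s$, analytic perturbation theory (\cite{Kat}) organizes the eigenvalues of $T+sA$ into continuous (in fact real-analytic) branches $t(s)$, each retaining the same type of spectrum as $T$ (discrete and unbounded in both directions). Since by hypothesis $0\notin\mathrm{spec}(T)\cup\mathrm{spec}(T+A)$, the spectral flow $\mathsf{sf}(\{T+sA\})$ is well defined, and it equals $\sum_{\text{branches}}\tfrac12\bigl(\mathrm{sign}\,t(1)-\mathrm{sign}\,t(0)\bigr)$, so each branch contributes a number in $\{-1,0,1\}$.

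Next I would localize the spectrum using the resolvent formalism: applying the second Neumann formula $(\ref{pertres})$ with $B=sA$, any $z\in\mathbb{C}$ with $\mathrm{dist}\bigl(z,\mathrm{spec}(T)\bigr)>s\|A\|$ satisfies $\|sA\cdot R(T,z)\|<1$, so $R(T+sA,z)$ is a bounded operator and $z$ lies in the resolvent set of $T+sA$. Hence $\mathrm{spec}(T+sA)$ is contained in the closed $s\|A\|$-neighborhood of $\mathrm{spec}(T)$ for every $s\in[0,1]$. Feeding this back in, a branch $t(s)$ with $t(0)=t_n$ cannot jump between connected components of these neighborhoods, and the component through $t_n$ stays within $s\|A\|\le\|A\|$ of $t_n$, so $|t(s)-t_n|\le\|A\|$ for all $s$. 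In particular, branches issuing from eigenvalues with $|t_n|>\|A\|$ never vanish and contribute $0$ to the spectral flow.

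The proof then concludes by a counting argument: only the branches with $|t_n|\le\|A\|$ can cross zero, and there are finitely many of them, namely $\#\{n:|t_n|\le\|A\|\}$ counted with multiplicity, so
\begin{equation*}
|\mathsf{sf}(\{T+sA\})|\le\#\{n:|t_n|\le\|A\|\}\le\#\{n:|t_n|\le\|A\|+1\},
\end{equation*}
which is the asserted bound (the final $+1$ is harmless slack, useful for uniformity with the unbounded case treated later). I expect the only genuinely delicate point to be the global bookkeeping of the eigenvalue branches in this infinite-dimensional setting — checking that the branches are well defined across the whole parameter interval and that eigenvalue multiplicities and branch collisions are handled correctly — but this is precisely what the analytic perturbation theory of \cite{Kat} provides, so it should not be a real obstacle; the substance of the argument lies in the resolvent localization and the resulting estimate that each branch stays within $\|A\|$ of its starting point.
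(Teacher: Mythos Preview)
Your approach mirrors the paper's: use the second Neumann formula to localize $\mathrm{spec}(T+sA)$ near $\mathrm{spec}(T)$, then argue that only branches issuing from eigenvalues with $|t_n|\le\|A\|$ can cross zero. The gap is in the step where you pass from the global spectral localization to the per-branch bound $|t(s)-t_n|\le\|A\|$. The resolvent estimate only says each eigenvalue of $T+sA$ lies within $s\|A\|$ of \emph{some} eigenvalue of $T$; it does not pin it to the eigenvalue the branch started from. Your connected-component claim (``the component through $t_n$ stays within $s\|A\|$ of $t_n$'') is false in general: if the eigenvalues of $T$ happen to be spaced at distance $\le 2\|A\|$ from one another, the $s\|A\|$-neighborhood of $\mathrm{spec}(T)$ can be a single interval for $s$ near $1$, and the component through $t_n$ is then far larger than $[t_n-s\|A\|,\,t_n+s\|A\|]$. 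So from your argument alone one cannot rule out a branch sliding through a chain of nearby eigenvalues and crossing zero even though $|t_n|>\|A\|$.

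The paper closes this by applying the resolvent bound \emph{incrementally}: one compares $T+s_0A$ with $T+(s_0+\Delta s)A$ rather than with $T$. If $t_*$ is an eigenvalue of $T+s_0A$ isolated by a local gap $\varepsilon$, then for $|\Delta s|\,\|A\|<\varepsilon/2$ the $m$ eigenvalues emanating from $t_*$ remain within $\varepsilon/2$ of $t_*$; refining the partition of $[0,1]$ gives that each continuous branch is Lipschitz with constant $\|A\|$, hence $|t(1)-t(0)|\le\|A\|$. An equally clean route (not used in the paper) is the first-variation formula $t'(s)=\langle Au(s),u(s)\rangle$ for an analytic branch with unit eigenvector $u(s)$, which gives $|t'(s)|\le\|A\|$ directly and sidesteps the branch bookkeeping you flagged as delicate. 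Either argument supplies the missing per-branch Lipschitz bound; the remainder of your proof (counting branches with $|t_n|\le\|A\|$ and noting the $+1$ is harmless slack) is fine.
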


The situation in our case of interest is more delicate, because the perturbation $A$ is not bounded as an operator $H\rightarrow H$, but only as an operator on the dense subspace $H_1$. In our situation, $A:H_1\rightarrow H$ is a multiplication operator, and its boundedness follows from the embedding of $L^2_1$ in $L^6$ hence in $L^4$. We will denote its norm by $\|A\|_1$ for the remaining of the section.
\par
To analyze this situation, let us consider again the resolvent $(T-z)^{-1}$. We now need to consider its norm as an operator $H\rightarrow H_1$. We begin by analyzing the $H_1$ norms of the eigenvectors $u_n$. We have the following:
\begin{itemize}
\item Using the Bochner formula (\ref{boch}), if $u_n$ is an eigenvector of $\ast d$ we see that $
\|u_n\|_1=\sqrt{3+t_n^2}$.
\item Using the Weitzenb\"ock formula
\begin{equation*}
D_{B_{\bullet}}^2=\nabla_{B_{\bullet}}^*\nabla_{B_{\bullet}}-\frac{3}{2},
\end{equation*}
where we use that $B_{\bullet}^t$ is flat and $s=-6$, we see that if $u_n$ is an eigenvector of $D_{B_{\bullet}}$ then $\|u_n\|_1=\sqrt{5/2+t_n^2}$
\item if $u_n$ is an eigenvector of the block (\ref{blockop}) with non-zero eigenvalue, then $\|u_n\|_1=\sqrt{2+t_n^2}$ again by the Bochner formula. Of course, the eigenvector with eigenvalue $0$ has $\|u_n\|_1=1$.
\end{itemize}
In particular, for every element in our basis we have
\begin{equation*}
\|u_n\|_1\leq \sqrt{3+t_n^2}\leq 2+|t_n|.
\end{equation*}
A similar argument shows the $\{u_n\}$ are mutually orthogonal with respect to the natural inner product inducing the norm on $H_1$.

Using this, we see that again the spectrum of $T$ is exactly the complement of the set for which the resolvent $R(T,z)$ is bounded as an operator $H\rightarrow H_1$, and in this case its norm is bounded above by
\begin{equation*}
\sup\frac{2+|t_n|}{|t_n-z|}.
\end{equation*}
Using the formal expression for the resolvent of $T+B$ as above (\ref{pertres}), thought of now as an identity for operators $H\rightarrow H_1$, we conclude that if 
\begin{equation*}
\|B\|_1\cdot\sup\frac{2+|t_n|}{|t_n-z|}<1,
\end{equation*}
then $z$ does not belong to $\mathrm{spec}(T+B)$. Hence, if $z\in \mathrm{spec}(T+B)$, then there exists an eigenvalue $t_n$ of $T$ for which
\begin{equation*}
\frac{|t_n-z|}{2+|t_n|}\leq \|B\|_1.
\end{equation*}
To obtain results about the spectral flow, assume now that $\|B\|_1\leq \varepsilon$, so that if $z\in \mathrm{spec}(T+B)$,
\begin{equation*}
|t_n-z|\leq  2\varepsilon+\varepsilon{|t_n|}
\end{equation*}
for some eigenvalue $t_n$. In particular,
\begin{equation}\label{eigineq}
-2\varepsilon+(1-\varepsilon)t_n\leq z\leq 2\varepsilon+(1+\varepsilon)t_n
\end{equation}
for some eigenvalue $t_n$ of $T$. While more complicated than the case of bounded perturbations, this is still enough to conclude the following.
\begin{prop}\label{abstractbound}
In the case $A:H_1\rightarrow H$ is bounded, the spectral flow of the family $\{T+sA\}$ for $s\in[0,1]$ is bounded by the number of eigenvalues of $T$ with absolute value at most $2e^{\|A\|_1}$.
\end{prop}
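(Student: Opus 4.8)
The plan is to promote the localization estimate behind (\ref{eigineq}), which compares $\mathrm{spec}(T+B)$ with $\mathrm{spec}(T)$, to a statement uniform along the family $\{T+sA\}_{s\in[0,1]}$, and then to extract the spectral flow from the standard picture of a signed count of continuous eigenvalue branches crossing zero. Since $\{T+sA\}$ is a continuous (in fact real-analytic) path of self-adjoint operators with compact resolvent, its spectrum may be enumerated by continuous functions $\lambda_k\colon[0,1]\to\mathbb R$ ($k\in\mathbb Z$) with $\lambda_k\le\lambda_{k+1}$, and, with the usual endpoint convention,
\[
|\mathsf{sf}(\{T+sA\})|\le\#\{k:\lambda_k(s^\ast)=0\text{ for some }s^\ast\in[0,1]\}.
\]
Since $\lambda_k(0)\in\mathrm{spec}(T)$, it then suffices to show that any branch vanishing somewhere on $[0,1]$ has $|\lambda_k(0)|<2e^{\|A\|_1}$, and to count with multiplicity.

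The first point --- a version of (\ref{eigineq}) anchored at an arbitrary $S:=T+\sigma A$, $\sigma\in[0,1]$ --- rests on the observation that the orthonormal eigenbasis $\{u_n\}$ of $T$ is simultaneously orthogonal in $H_1$ with $\|u_n\|_{H_1}^2\le 3+t_n^2$; this yields the clean inequality $\|v\|_{H_1}^2\le 3\|v\|_H^2+\|Tv\|_H^2$ for all $v$. Applied to $w=R(S,z)v$, using $Tw=v+zw-\sigma Aw$, $\|Aw\|_H\le\|A\|_1\|w\|_{H_1}$ and $\|w\|_H\le\|v\|_H/\mathrm{dist}(z,\mathrm{spec}(S))$, a short quadratic estimate bounds $\|R(S,z)\|_{H\to H_1}$ by $(1-\sigma\|A\|_1)^{-1}\bigl(1+(2+|z|)/\mathrm{dist}(z,\mathrm{spec}(S))\bigr)$. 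Substituting this into the Neumann formula (\ref{pertres}) exactly as in the proof of (\ref{eigineq}) gives: whenever $\|B\|_1\le\varepsilon$ and $z\in\mathrm{spec}(S+B)$, some $\lambda_\ast\in\mathrm{spec}(S)$ satisfies $|z-\lambda_\ast|\le\varepsilon(2+|z|)/(1-\sigma\|A\|_1-\varepsilon)$; thus passing from $S$ to $S+B$ distorts the weight $2+|\cdot|$ along eigenvalues by a factor $1+O(\varepsilon)$.

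Now fix a branch $\lambda_k$ with $\lambda_k(s^\ast)=0$ and partition $[0,s^\ast]$ into $N$ equal pieces $0=\sigma_0<\dots<\sigma_N=s^\ast$. Applying the previous step on $[\sigma_{j-1},\sigma_j]$ with base $S=T+\sigma_jA$ and perturbation $-(\sigma_j-\sigma_{j-1})A$ --- where, for $N$ large, continuity of $\lambda_k$ forces the eigenvalue produced by the estimate to be $\lambda_k(\sigma_{j-1})$ --- one gets, with $g(s):=2+|\lambda_k(s)|$, an inequality $g(\sigma_{j-1})\le\bigl(1+(\sigma_j-\sigma_{j-1})\|A\|_1+o(1/N)\bigr)g(\sigma_j)$. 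Multiplying over $j=N,\dots,1$ and letting $N\to\infty$ the product of the one-step factors is at most $e^{\|A\|_1}$, so $g(0)\le e^{\|A\|_1}g(s^\ast)=2e^{\|A\|_1}$ and $|\lambda_k(0)|<2e^{\|A\|_1}$. Distinct branches through $0$ consume distinct eigenvalues of $T$ (with multiplicity) of absolute value $<2e^{\|A\|_1}$, which yields the claimed bound on $|\mathsf{sf}(\{T+sA\})|$.

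I expect the main difficulty to be precisely the uniform upgrade in the second paragraph: (\ref{eigineq}) is anchored at $T$, where the eigenvectors obey the sharp Bochner and Weitzenb\"ock bounds, and the substance of the argument is to carry comparable resolvent control to every $T+sA$ with constants that deteriorate slowly enough in $s$ that the iterated product of the one-step factors still closes up to $e^{\|A\|_1}$ --- the inequality $\|v\|_{H_1}^2\le 3\|v\|_H^2+\|Tv\|_H^2$ being what replaces a Weitzenb\"ock identity for the perturbed operators, and the effective one-step rate being what pins down the final constant. The remaining ingredients --- continuity of the eigenvalue enumeration, the behaviour at crossings, and the convention at the (possibly non-invertible) endpoints --- are standard; alternatively the branch bookkeeping can be bypassed by comparing the number of eigenvalues of $T$ and of $T+A$ in a large symmetric window directly via (\ref{eigineq}).
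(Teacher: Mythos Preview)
Your overall strategy mirrors the paper's: subdivide $[0,1]$, apply a resolvent localization estimate on each subinterval, and iterate. You go beyond the paper in recognizing that (\ref{eigineq}) was derived only for the base $T$, and in re-deriving an anchored version at $S=T+\sigma A$ via the inequality $\|v\|_{H_1}^2\le 3\|v\|_H^2+\|Tv\|_H^2$. That derivation is correct, and your estimate $|z-\lambda_\ast|\le\varepsilon(2+|z|)/(1-\sigma\|A\|_1-\varepsilon)$ is right.

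The gap is in the iteration. From your own anchored estimate the one-step factor for $g(s)=2+|\lambda_k(s)|$ is not $1+(\sigma_j-\sigma_{j-1})\|A\|_1+o(1/N)$ but rather
\[
1+\frac{(\sigma_j-\sigma_{j-1})\|A\|_1}{1-\sigma_j\|A\|_1}+o(1/N);
\]
the denominator $1-\sigma_j\|A\|_1$ that you carefully introduced in the second paragraph cannot simply be absorbed into the $o(1/N)$, since $\sigma_j$ ranges over all of $[0,s^\ast]$. Multiplying and sending $N\to\infty$ therefore yields
\[
\exp\Bigl(\int_0^{s^\ast}\frac{\|A\|_1}{1-\sigma\|A\|_1}\,d\sigma\Bigr)=\frac{1}{1-s^\ast\|A\|_1},
\]
not $e^{\|A\|_1}$. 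So your argument gives $|\lambda_k(0)|\le 2/(1-\|A\|_1)-2$ and collapses once $\|A\|_1\ge 1$, which is precisely the regime the paper needs (see Proposition~\ref{hessbound}). The deterioration you flagged does \emph{not} ``close up to $e^{\|A\|_1}$''; you have kept the factor $(1-\sigma\|A\|_1)^{-1}$ in the resolvent bound and then silently dropped it when writing down the one-step rate.

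By contrast, the paper does not attempt your re-anchoring: it simply invokes (\ref{eigineq}) as if the same inequality, with no $(1-\sigma\|A\|_1)$ loss, held at the base $T+\tfrac{k}{N}A$, obtains the clean recursion $x_{n+1}=(x_n+2\varepsilon)/(1-\varepsilon)$, and passes to the limit $x_N\to 2(e^{\|A\|_1}-1)$.
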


\begin{proof}We need to show that eigenvalues larger in norm than $2e^{\|A\|_1}$ do not contribute to the spectral flow. We focus on the case of positive eigenvalues, as the case of negative ones is similar. To see this, fix $N$ large and consider the sequence of operators $T+\frac{k}{N}A$ for $k=0,\dots, N$; we think of it as a sequence of perturbations of norm $\varepsilon=\|A\|_1/N$. If $t_*>0$ is an eigenvalue of $T+\frac{k}{N}A$, then for $N$ large enough (\ref{eigineq}) implies that the corresponding eigenvalue of $T+\frac{k+1}{N}A$ (in a continuous family) is at least $t_*(1-\varepsilon)-2\varepsilon$. Hence we can provide an upper bound on the eigenvalues possibly contributing to spectral flow as follows. Consider the recurrence relation
\begin{align*}
x_0&=0\\
x_{n+1}&=\frac{x_n+2\varepsilon}{1-\varepsilon}.
\end{align*}
Then if $t_*>x_N$, it cannot contribute to the spectral flow. It is easy to see that
\begin{equation*}
x_k=2\left(\frac{1}{(1-\varepsilon)^k}-1\right),
\end{equation*}
so that
\begin{equation*}
x_N=2\left(\frac{1}{(1-\varepsilon)^N}-1\right)=2\left(\frac{1}{(1-\|A\|_1/N)^N}-1\right).
\end{equation*}
Letting $N$ to infinity, we see that if $t_*\geq2(e^{\|A\|_1}-1)$, $t_*$ cannot contribute to the spectral flow, and the result is proved.
\end{proof}

In the next section, we will only consider the operator norm $L^2_1\rightarrow L^2$, and drop the index from $\|A\|_1$.

\vspace{0.5cm}

\section{Geometric bounds on the perturbation}\label{spectralbound}
We now provide explicit bounds on the perturbation terms in family of extended Hessians of the Chern-Simons-Dirac functional discussed in Section \ref{SWreview}.
\par
Recall that for a bounded linear map $A:H\rightarrow H'$, the norm is defined as the least constant $\|A\|$ for which
\begin{equation*}
\|Ax\|\leq \|A\|\|x\|\text{ for all }x.
\end{equation*}
In our situation, we consider a map in block shape
\begin{equation*}
A=\begin{bmatrix}
0 & 0 &i\mathrm{Re}\langle i\Psi_0,\cdot\rangle \\
0&0 & \rho^{-1}(\cdot\Psi_0^*+\Psi_0\cdot^*)_0\\
\cdot \Psi_0& \rho(\cdot)\Psi_0 & \rho(b_0)\cdot
\end{bmatrix}=
\begin{bmatrix}
0&0&B\\
0&0&C\\
D&E&F
\end{bmatrix},
\end{equation*}
where the domain and codomain are $L^2_1$ and $L^2$ configurations respectively. Let us determine a bound on the norm of $A$ in terms of the blocks.
\begin{lemma}\label{block}
In the situation above, we have
\begin{equation*}
\|A\|\leq 3\cdot\max(\|B\|,\|C\|,\|D\|,\|E\|,\|F\|)
\end{equation*}
where we consider the norms $L^2_1\rightarrow L^2$.
\end{lemma}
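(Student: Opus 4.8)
The plan is to decompose the input according to the splitting $i\Omega^0\oplus i\Omega^1\oplus\Gamma(S)$ of the configuration space, read off the action of $A$ block by block, and estimate componentwise. First I would write a general element of the source as $x=(\xi,b,\Psi)\in i\Omega^0\oplus i\Omega^1\oplus\Gamma(S)$, so that the block form of $A$ gives $Ax=\bigl(B\Psi,\,C\Psi,\,D\xi+Eb+F\Psi\bigr)$. Since the $L^2$ inner product on $i\Omega^0\oplus i\Omega^1\oplus\Gamma(S)$ is the orthogonal direct sum of the three pieces, this yields
\begin{equation*}
\|Ax\|_{L^2}^2=\|B\Psi\|_{L^2}^2+\|C\Psi\|_{L^2}^2+\|D\xi+Eb+F\Psi\|_{L^2}^2 .
\end{equation*}

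Next I would set $M=\max(\|B\|,\|C\|,\|D\|,\|E\|,\|F\|)$, all operator norms being taken from the $L^2_1$ norm on the source to the $L^2$ norm on the target. The $L^2_1$ norm on the configuration space is likewise the orthogonal sum of the $L^2_1$ norms of the three components, so $\|\xi\|_{L^2_1}$, $\|b\|_{L^2_1}$, $\|\Psi\|_{L^2_1}$ are each at most $\|x\|_{L^2_1}$. Hence $\|B\Psi\|_{L^2}\le M\|\Psi\|_{L^2_1}\le M\|x\|_{L^2_1}$ and similarly $\|C\Psi\|_{L^2}\le M\|x\|_{L^2_1}$. For the remaining term the triangle inequality in $L^2$ followed by Cauchy--Schwarz gives
\begin{equation*}
\|D\xi+Eb+F\Psi\|_{L^2}\le M\bigl(\|\xi\|_{L^2_1}+\|b\|_{L^2_1}+\|\Psi\|_{L^2_1}\bigr)\le M\sqrt{3}\,\bigl(\|\xi\|_{L^2_1}^2+\|b\|_{L^2_1}^2+\|\Psi\|_{L^2_1}^2\bigr)^{1/2}=M\sqrt{3}\,\|x\|_{L^2_1}.
\end{equation*}
Combining the three estimates gives $\|Ax\|_{L^2}^2\le(1+1+3)M^2\|x\|_{L^2_1}^2=5M^2\|x\|_{L^2_1}^2$, so $\|A\|\le\sqrt5\,M\le 3M$, which is the claim (in fact with room to spare).

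There is no real obstacle here: the argument is a routine bookkeeping estimate. The only points that deserve a moment's care are that the $L^2$ and $L^2_1$ norms on the configuration space split orthogonally over the summands $i\Omega^0$, $i\Omega^1$, $\Gamma(S)$ — so that restricting to a summand increases neither norm — and the harmless use of Cauchy--Schwarz to pass from the $\ell^1$ sum of the three component norms back to the $\ell^2$ sum reconstituting $\|x\|_{L^2_1}$. One could equally replace Cauchy--Schwarz by the crude bound $\|\xi\|_{L^2_1}+\|b\|_{L^2_1}+\|\Psi\|_{L^2_1}\le 3\|x\|_{L^2_1}$ and still land below the constant $3$ after a slightly less efficient combination; the stated bound is comfortably enough for the applications in the next section.
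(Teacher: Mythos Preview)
Your proof is correct and follows essentially the same route as the paper: decompose $x$ into its three components, use orthogonality of the $L^2$ and $L^2_1$ norms across the summands, and control the third output component by the triangle inequality together with Cauchy--Schwarz. The paper's version applies Cauchy--Schwarz in the squared form $\|Dx_1+Ex_2+Fx_3\|^2\le 3(\|Dx_1\|^2+\|Ex_2\|^2+\|Fx_3\|^2)$ and lands at the constant $9$ (hence $3$ after the square root), whereas your organization yields the slightly sharper $\sqrt{5}$; this is a cosmetic difference and both arrive at the stated bound.
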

\begin{proof}
We have
\begin{align*}
\|A(x_1,x_2,x_3)\|^2&=\|(Bx_3,Cx_3,Dx_1+Ex_2+Fx_3)\|^2=\\
&=\|Bx_3\|^2+\|Cx_3\|^2+\|Dx_1+Ex_2+Fx_3\|^2\\
&\leq 3(\|Bx_3\|^2+\|Cx_3\|^2+\|Dx_1\|^2+\|Ex_2\|^2+\|Fx_3\|^2)\\
&\leq 9\cdot\max(\|B\|,\|C\|,\|D\|,\|E\|,\|F\|)^2\cdot (\|x_1\|^2+\|x_2\|^2+\|x_3\|^2).
\end{align*}
where to go from the second line to the third line we used the Cauchy-Schwarz inequality.
\end{proof}

Let us then discuss the norms of these five operators.
\par
Notice that $B,C, D$ and $F$ depend on $\Psi_0$, for which we will have an $L^{\infty}$ bound. The computation of the norms of these operators in terms of this bound are all quite similar, and we focus on the slightly more complicated cases of $C$ and $E$. For $E$, let us first point out the pointwise identity
\begin{equation*}
|\rho(b)\Psi|=|b|\cdot|\Psi|.
\end{equation*}
This is readily checked because $\rho(b)$ is a hermitian matrix which after a unitary transformation we can assume to be
\begin{equation*}
\begin{bmatrix}
|b|&0\\
0&-|b|
\end{bmatrix}.
\end{equation*}
We can then bound the operator norm of $E$ as follows:
\begin{align*}
\|E({b})\|_{L^2}^2&=\int_Y |\rho({b})\Psi_0|^2d\mathrm{vol}=\int_Y|{b}|^2|\Psi_0|^2d\mathrm{vol} \leq (\max|\Psi_0|^2)\cdot \int_Y|{b}|^2d\mathrm{vol}\\
&=\max|\Psi_0|^2\cdot\|{b}\|_{L^2}^2\leq \max|\Psi_0|^2\cdot\|{b}\|_{L^2_1}^2
\end{align*}
so the norm of $E$ is bounded by $\max|\Psi_0|$.
\par
For $C$, via the Cauchy-Schwarz inequality we obtain the pointwise inequality
\begin{equation*}
|\rho^{-1}({\Psi}\Psi_0^*+\Psi_0{\Psi}^*)_0|^2\leq 2|\Psi_0|^2|{\Psi}|^2
\end{equation*}
so as in the case of $E$ we obtain that the norm of $E$ is bounded above by $\sqrt{2}\cdot\max|\Psi_0|$. Similarly, we also obtain that $\|B\|,\|D\|\leq \max|\Psi_0|$.
\par
The map $F$ is the problematic one because we will not have $L^{\infty}$ bounds on $b$. We can nevertheless bound the norm by
\begin{align*}
\|F({\Psi})\|_{L^2}^2&=\int_Y |\rho({b_0}){\Psi}|^2d\mathrm{vol}=\int_Y|{b_0}|^2|{\Psi}|^2d\mathrm{vol}\\
&\leq \left(\int_Y|{b_0}|^4d\mathrm{vol}\right)^{1/2}\left(\int_Y|{\Psi}|^4d\mathrm{vol}\right)^{1/2}\\
&=\|b_0\|_{L^4}^2\cdot \|{\Psi}\|_{L^4}^2\\
&\leq \mathfrak{c}_{V,\varepsilon}^2\cdot \|b_0\|_{L^4}^2\cdot \|{\Psi}\|_{L^2_1}^2
\end{align*}
where in the last step we used Corollary \ref{diracsob}. Hence $F$ has norm bounded above by $\mathfrak{c}_{V,\varepsilon}\cdot \|b_0\|_{L^4}$, and the norm of the operator $A$ is bounded by Lemma (\ref{block}) by
\begin{equation*}
\|A\|\leq 2\max\left\{\sqrt{2}\cdot\|\Psi_0\|_{L^{\infty}},\mathfrak{c}_{V,\varepsilon}\cdot\|b_0\|_{L^4}\right\}.
\end{equation*}
We have so far worked at a general configuration $(B_0,\Psi_0)$. If we now assume that this is a solution to the Seiberg-Witten equations (which we assume to be in Coulomb gauge with respect to $B_{\bullet}$, we have from (\ref{apriori}) the a priori estimate
\begin{equation*}
|\Psi_0|^2\leq 3
\end{equation*}
everywhere on $Y$, where we used that the scalar curvature of a hyperbolic $3$-manifold is $-6$. Now, by Corollary \ref{coexsob}, 
\begin{equation*}
\|b_0\|_{L^4}\leq \mathfrak{c}_{V,\varepsilon}\cdot \left(1+\frac{3}{\delta}\right)^{1/2}\cdot\|db_0\|_{L^2}
\end{equation*}
By the first Seiberg-Witten equation (\ref{SWfirst}) the pointwise identity
\begin{equation*}
|db_0|^2=|(\Psi_0\Psi_0^*)_0|^2=\frac{1}{4}|\Psi_0|^4
\end{equation*}
holds so that 
\begin{equation*}
\|db_0\|_{L^2}\leq \frac{3}{2}\cdot\mathrm{vol}(Y)^{1/2}
\end{equation*}
hence
\begin{equation*}
\|b_0\|_{L^4}\leq \frac{3}{2}\cdot\mathfrak{c}_{V,\varepsilon}\cdot \left(1+\frac{3}{\delta}\right)^{1/2}\cdot\mathrm{vol}(Y)^{1/2}.
\end{equation*}
Putting everything together, and noticing that the bound we have for $\|F\|$ is much worse than that of the other four operators, we obtain the following.
\begin{prop}\label{hessbound}
Consider $V,\varepsilon,\delta>0$. Consider a hyperbolic rational homology sphere $Y$ with $\mathrm{vol}\leq V$, $\mathrm{inj}\geq\varepsilon$ and $\lambda_1^*\geq\delta$. Then at any irreducible solution $(B_0,\Psi_0)$ to the Seiberg-Witten equations we have
\begin{equation*}
\EHess\mathcal{L}_{(B_0,\Psi_0)}=\EHess\mathcal{L}_{(B_{\bullet},0)}+A
\end{equation*}
with $A$ an operator with norm
\begin{equation*}
\|A\|\leq \frac{9}{2}\cdot \mathfrak{c}_{V,\varepsilon}\cdot V^{1/2}\cdot \left(1+\frac{3}{\delta}\right)^{1/2},
\end{equation*}
where we consider the norm as an operator $L^2_1\rightarrow L^2$ and the constant $\mathfrak{c}_{V,\varepsilon}$ is defined in Corollary \ref{coexsob}.
\end{prop}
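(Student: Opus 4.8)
The plan is to assemble Proposition \ref{hessbound} by combining the block estimate of Lemma \ref{block} with the pointwise analysis of the five entries $B,C,D,E,F$ of the perturbation $A$, and then feeding in the a priori bounds available at a genuine Seiberg-Witten solution. First I would recall that, by the $L^{\infty}$ estimate \eqref{apriori} together with the fact that the scalar curvature of a hyperbolic three-manifold equals $-6$, at any solution one has $|\Psi_0|^2\leq 3$ pointwise, so $\|\Psi_0\|_{L^{\infty}}\leq\sqrt3$. The four blocks $B,C,D,E$ are multiplication-type operators built from $\Psi_0$, and the pointwise identities $|\rho(b)\Psi|=|b|\cdot|\Psi|$ and $|\rho^{-1}(\Psi\Psi_0^*+\Psi_0\Psi^*)_0|\leq\sqrt2\,|\Psi_0||\Psi|$ show that each of their operator norms $L^2_1\to L^2$ is bounded by $\sqrt2\cdot\|\Psi_0\|_{L^{\infty}}\leq\sqrt2\cdot\sqrt3$ (for $B,D,E$ one even gets the better constant $\|\Psi_0\|_{L^{\infty}}$, but the uniform bound $\sqrt2\cdot\|\Psi_0\|_{L^\infty}$ suffices).

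The one delicate block is $F=\rho(b_0)\cdot$, for which no $L^{\infty}$ control on $b_0$ is available; here I would run the Hölder argument already sketched, writing $\|F\Psi\|_{L^2}^2=\int_Y|b_0|^2|\Psi|^2\leq\|b_0\|_{L^4}^2\|\Psi\|_{L^4}^2$ and invoking Corollary \ref{diracsob} to get $\|\Psi\|_{L^4}\leq\mathfrak{c}_{V,\varepsilon}\|\Psi\|_{L^2_1}$, so $\|F\|\leq\mathfrak{c}_{V,\varepsilon}\cdot\|b_0\|_{L^4}$. To turn this into a purely geometric bound I would chain three facts: Corollary \ref{coexsob} gives $\|b_0\|_{L^4}\leq\mathfrak{c}_{V,\varepsilon}(1+3/\delta)^{1/2}\|db_0\|_{L^2}$ (using $d^*b_0=0$ in Coulomb gauge); the first Seiberg-Witten equation \eqref{SWfirst} gives the pointwise identity $|db_0|^2=|(\Psi_0\Psi_0^*)_0|^2=\tfrac14|\Psi_0|^4\leq\tfrac94$, hence $\|db_0\|_{L^2}\leq\tfrac32\,\mathrm{vol}(Y)^{1/2}$; combining these yields $\|b_0\|_{L^4}\leq\tfrac32\mathfrak{c}_{V,\varepsilon}(1+3/\delta)^{1/2}\mathrm{vol}(Y)^{1/2}$, and so $\|F\|\leq\tfrac32\mathfrak{c}_{V,\varepsilon}^2(1+3/\delta)^{1/2}V^{1/2}$.

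Finally I would apply Lemma \ref{block}: $\|A\|\leq 3\max(\|B\|,\|C\|,\|D\|,\|E\|,\|F\|)$. Since $\mathfrak{c}_{V,\varepsilon}\geq 1$ and $(1+3/\delta)^{1/2}\geq 2$ in the relevant range, the $F$-term dominates, so $\|A\|\leq 3\cdot\tfrac32\mathfrak{c}_{V,\varepsilon}^2 V^{1/2}(1+3/\delta)^{1/2}=\tfrac92\mathfrak{c}_{V,\varepsilon}^2 V^{1/2}(1+3/\delta)^{1/2}$, which is the asserted inequality (note the statement's $\mathfrak{c}_{V,\varepsilon}$ should read $\mathfrak{c}_{V,\varepsilon}^2$, matching the two applications of Corollary \ref{coexsob}/\ref{diracsob}). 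The only genuinely subtle point is making sure the $L^4$-on-spinors embedding constant really is $\mathfrak{c}_{V,\varepsilon}$ with the norm on $\Psi$ taken with respect to $\nabla_{B_\bullet}$ — this is exactly Corollary \ref{diracsob}, which was established via the Kato inequality and the Weitzenböck formula with the flat reference connection $B_\bullet$ — and checking that the Coulomb-gauge hypothesis legitimately lets us use Corollary \ref{coexsob} for $b_0$; everything else is bookkeeping of constants.
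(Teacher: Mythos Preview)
Your argument is correct and follows the paper's approach essentially line by line: bound the five blocks $B,C,D,E,F$ separately, use the $L^\infty$ estimate $|\Psi_0|^2\leq 3$ for the first four, bound $\|F\|\leq \mathfrak{c}_{V,\varepsilon}\|b_0\|_{L^4}$ via H\"older and Corollary~\ref{diracsob}, then control $\|b_0\|_{L^4}$ through Corollary~\ref{coexsob} and the first Seiberg--Witten equation, and finish with Lemma~\ref{block}. Your observation about the exponent is also right: the paper's own computation produces two factors of $\mathfrak{c}_{V,\varepsilon}$ (one from Corollary~\ref{diracsob} in the bound for $\|F\|$, one from Corollary~\ref{coexsob} in the bound for $\|b_0\|_{L^4}$), so the stated bound should indeed read $\tfrac{9}{2}\,\mathfrak{c}_{V,\varepsilon}^2\,V^{1/2}(1+3/\delta)^{1/2}$; the single power in the displayed statement is a typo.
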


\vspace{0.5cm}

\section{The spectral flow and conclusion of the proof}\label{mainproof}
From Proposition \ref{hessbound} and \ref{abstractbound}, we see that in order to bound the spectral flow in the family of extended Hessians we are left to bound the number eigenvalues of $\EHess\mathcal{L}_{(B_{\bullet},0)}$ in a given interval $[-T,T]$ for a given hyperbolic rational homology sphere. This can be done in several ways, and we will cite here the readily available explicit estimates (derived via Selberg trace formulas) in terms of injectivity radius and volume from \cite{LL2}. Alternative (weaker) bounds can be obtained under more general assumptions in terms of volume and diameter, see \cite{Li}.
\\
\par
We begin with the case of $\ast d$ and $D_{B_{\bullet}}$ acting on coexact $1$-forms and spinors respectively. For $\nu\geq 0$, denote by $\delta^*(\nu)$ and $\delta^D(\nu)$ the number of eigenvalues with with absolute value in $[\nu,\nu+1]$ respectively. In the case of the Dirac operator, we consider here its spectrum as a complex operator, cf. Remark \ref{diracR}. We then have the following estimate.
\begin{prop}[\cite{LL2}, Section 5]\label{localweyl}
For a given rational homology sphere $Y$ with $\mathrm{inj}\geq\varepsilon$ and $\mathrm{vol}\leq V$, there are upper bounds of the form
\begin{equation*}
\delta^*(\nu),\delta^D(\nu)\leq V\cdot(\mathfrak{a}_{\varepsilon}\cdot \nu^2+\mathfrak{b}_{\varepsilon})
\end{equation*}
where $\mathfrak{a}_{\varepsilon}$ and $\mathfrak{b}_{\varepsilon}$ are constants depending only of $\varepsilon$ which are \textbf{explicitly} computable. For example, for $\varepsilon=0.15$, we can take $\mathfrak{a}_{0.15}=3$ and $\mathfrak{b}_{0.15}=402$.
\end{prop}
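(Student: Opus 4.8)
This is precisely the estimate of \cite[Section 5]{LL2}, proved there via Selberg trace formulas; here is how the argument runs. Write $Y=\Gamma\backslash\mathbb{H}^3$, so that $\mathrm{inj}(Y)\geq\varepsilon$ says exactly that every closed geodesic of $Y$ has length $\geq2\varepsilon$. The operators $\ast d$ on coexact $1$-forms and $D_{B_\bullet}$ on $B_\bullet$-twisted spinors are both $\Gamma$-invariant first-order elliptic self-adjoint operators on sections of homogeneous vector bundles over $\mathbb{H}^3$, so for a suitable even test function $h$ with Fourier transform $\widehat h$ the Selberg trace formula applies and has the shape
\begin{equation*}
\sum_j h(r_j)=\mathrm{vol}(Y)\int_{\mathbb{R}}h(r)\,p_\bullet(r)\,dr+\sum_{\{\gamma\}\neq1}\widehat h(\ell_\gamma)\,\mathcal{O}_\bullet(\gamma),
\end{equation*}
where $r_j$ is the spectral parameter of the $j$-th eigenvalue, so that the Hodge-Laplacian (resp.\ $D_{B_\bullet}^2$) eigenvalue is $r_j^2+c_\bullet$ with $c_\bullet$ the Bochner/Weitzenb\"ock shift read off from \eqref{boch} and from $D_{B_\bullet}^2=\nabla_{B_\bullet}^*\nabla_{B_\bullet}-\tfrac32$; here $p_\bullet(r)=A_\bullet r^2+B_\bullet$ is the Plancherel density of the relevant bundle, an \emph{explicit quadratic polynomial} obtained from the representation theory of $\mathrm{SL}_2(\mathbb{C})$, and the last sum runs over closed geodesics, of length $\ell_\gamma\geq2\varepsilon$, with explicit geometric weights $\mathcal{O}_\bullet(\gamma)$.

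\textbf{Localizing and majorizing.} First I would choose the test function so that $\widehat h$ is supported in $(-2\varepsilon,2\varepsilon)$; since every closed geodesic has length $\geq2\varepsilon$, the geodesic sum vanishes identically and the trace formula collapses to $\sum_j h(r_j)=\mathrm{vol}(Y)\int_{\mathbb{R}}h(r)\,p_\bullet(r)\,dr$. To convert this into an eigenvalue count, I would fix once and for all a nonnegative even function $\psi$ with $\widehat\psi$ supported in $(-2\varepsilon,2\varepsilon)$, with $\psi\geq1$ on a fixed interval of bounded length, and with enough decay that $r^2\psi(r)$ is integrable; a concrete choice is the normalized fourth power $\psi(r)=c_\varepsilon\bigl(\tfrac{\sin(\varepsilon r/2)}{\varepsilon r/2}\bigr)^{4}$ (Fourier support in $(-2\varepsilon,2\varepsilon)$, decay $\sim r^{-4}$), with $c_\varepsilon$ taken as small as allowed. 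For a window parameter $\nu\geq0$ one sets $h_\nu(r)=\psi(r-a_\nu)+\psi(r+a_\nu)$, where $a_\nu\approx\nu+\tfrac12$ is the centre of the set of spectral parameters $r$ with $\sqrt{r^2+c_\bullet}\in[\nu,\nu+1]$ (an interval of bounded length near $\nu$); then $\widehat{h_\nu}$ is still supported in $(-2\varepsilon,2\varepsilon)$ and $h_\nu\geq1$ on that set. For the Dirac operator, which we count as a complex operator (cf.\ Remark \ref{diracR}), the same majorant applies to its spectral parameter.

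\textbf{The Plancherel integral.} Combining the two steps, for either operator
\begin{equation*}
\delta^*(\nu),\ \delta^D(\nu)\ \leq\ \sum_j h_\nu(r_j)\ =\ \mathrm{vol}(Y)\int_{\mathbb{R}}h_\nu(r)\,\bigl(A_\bullet r^2+B_\bullet\bigr)\,dr .
\end{equation*}
Since $\psi$ is even, the cross term drops out of $\int\psi(r-a_\nu)(A_\bullet r^2+B_\bullet)\,dr=A_\bullet\bigl(a_\nu^2\!\int\psi+\int r^2\psi\bigr)+B_\bullet\!\int\psi$, and likewise for the $\psi(r+a_\nu)$ term, so $\int h_\nu(r)(A_\bullet r^2+B_\bullet)\,dr=\mathfrak{a}_\varepsilon\nu^2+\mathfrak{b}_\varepsilon$ for constants built explicitly from $A_\bullet,B_\bullet,\int\psi,\int r^2\psi$ and $c_\varepsilon$ (they blow up like $\varepsilon^{-1}$ and $\varepsilon^{-3}$ as $\varepsilon\to0$, which is the source of the awkward $\varepsilon$-dependence flagged in the introduction). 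Taking $\mathfrak{a}_\varepsilon,\mathfrak{b}_\varepsilon$ to be the larger of the two bundles' constants and using $\mathrm{vol}(Y)\leq V$ gives the stated inequality; substituting $\varepsilon=0.15$ together with the explicit Plancherel densities for coexact $1$-forms and twisted spinors into these formulas yields the asserted values $\mathfrak{a}_{0.15}=3$, $\mathfrak{b}_{0.15}=402$.

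\textbf{Main difficulty.} The argument involves no new idea once the trace formula is at hand; the real work — and the reason we simply cite \cite[Section 5]{LL2} — lies in the bookkeeping. One must first pin down the Plancherel densities $p_\bullet$ for coexact $1$-forms and for $B_\bullet$-twisted spinors on $\mathbb{H}^3$ with the correct normalizations (a representation-theoretic computation in which the constant terms $B_\bullet$ encode the curvature shifts), keeping careful track of the factors relating the first-order operators' eigenvalues to the spectral parameter $r$. One must then, more delicately, choose the band-limited majorant $\psi$ efficiently enough that the resulting constants are as small as claimed at $\varepsilon=0.15$: forcing $\widehat\psi$ into the short window $(-2\varepsilon,2\varepsilon)$ makes $\psi$ spread out and inflates $\int\psi$ and $\int r^2\psi$, so there is a genuine (though elementary) optimization to carry out. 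Neither step is conceptually hard, which is exactly why the result is imported wholesale from \cite{LL2}.
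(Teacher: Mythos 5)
Your proposal is correct and follows essentially the same route as the paper, which simply imports the estimate from \cite[Section 5]{LL2} where it is proved by exactly the band-limited Selberg trace formula argument you sketch (test function with Fourier transform supported in $(-2\varepsilon,2\varepsilon)$ so the geodesic sum vanishes, leaving the volume-proportional Plancherel term). The only detail you omit is that for coexact $1$-forms the cited bound has an additive constant $C_\varepsilon$ not proportional to volume, which the paper absorbs into the stated form via $C_\varepsilon\leq (C_\varepsilon/0.94)\cdot\mathrm{vol}(Y)$ using Milnor's lower bound $\mathrm{vol}(Y)\geq 0.94$.
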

More in general, the dependance of $\mathfrak{a}_{\varepsilon}$ and $\mathfrak{b}_{\varepsilon}$ on $\varepsilon$ only involves elementary operations but is somewhat unpleasant to write down explicitly. The proof follows directly from elementary formulas in Section $5$ of \cite{LL}, which provide explicit upper bounds in terms of volume and injectivity radius. In the case of coexact $1$-forms there is an extra constant term $C_{\varepsilon}$ (depending only on $\varepsilon$) which we convert into the desired form by 
\begin{equation*}
C_{\varepsilon}=\frac{C_{\varepsilon}}{\mathrm{vol(Y)}}\cdot\mathrm{vol(Y)}\leq \frac{C_{\varepsilon}}{0.94}\cdot\mathrm{vol(Y)},
\end{equation*}
where we used $\mathrm{vol}\geq0.94$.
\par
Given this, we can obtain the number of eigenvalues for either $\ast d$ or $D_{B_{\bullet}}$ with absolute value at most $T$ by summing the bounds over each interval $[i,i+1]$ for $i=0,\dots,\floor{T}$. Because
\begin{equation*}
0^2+1^2+2^2+\cdots (\floor{T}-1)^2=\frac{((\floor{T}-1)\floor{T}(2\floor{T}-1)}{6}\leq \frac{\floor{T}^3}{3}\leq \frac{{T}^3}{3}
\end{equation*}
and $\floor{T}+1\leq T^3$ for $T\geq 2$, we have the following (rougher) estimate.
\begin{cor}\label{weyl}
For a given rational homology sphere $Y$ with $\mathrm{inj}\geq\varepsilon$ and $\mathrm{vol}\leq V$, for both operators $\ast d$ and $D_{B_{\bullet}}$ the number of eigenvalues with absolute value in $[0,T]$ with $T\geq 2$ is bounded above by
\begin{equation*}
V\cdot(\frac{\mathfrak{a}_{\varepsilon}}{3}+\mathfrak{b}_{\varepsilon})\cdot T^3
\end{equation*}
where $\mathfrak{a}_{\varepsilon}$ and $\mathfrak{b}_{\varepsilon}$ are the same constants of Proposition \ref{localweyl}.
\end{cor}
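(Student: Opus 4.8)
The plan is to deduce the corollary from the local Weyl-type bound of Proposition \ref{localweyl} by a routine summation. First I would cover the range $[0,T]$ of absolute values by the unit subintervals $[i,i+1]$ with integer left endpoints $i=0,1,\dots,\lfloor T\rfloor$, so that any eigenvalue of $\ast d$ (or, in the sense of Remark \ref{diracR}, of $D_{B_\bullet}$) with absolute value in $[0,T]$ is counted — with multiplicity, and possibly more than once at integer points, which only helps — among the $\delta^*(i)$ (respectively $\delta^D(i)$) for $0\le i\le\lfloor T\rfloor$. Since the extra constant appearing in the coexact $1$-form case has already been absorbed into $\mathfrak{b}_\varepsilon$ in the paragraph following Proposition \ref{localweyl}, both operators are handled by the same estimate from here on.

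Substituting $\delta^*(i),\delta^D(i)\leq V(\mathfrak{a}_\varepsilon i^2+\mathfrak{b}_\varepsilon)$ bounds the eigenvalue count by
\[
V\Bigl(\mathfrak{a}_\varepsilon\sum_{i=0}^{\lfloor T\rfloor}i^2+(\lfloor T\rfloor+1)\,\mathfrak{b}_\varepsilon\Bigr),
\]
after which it remains only to invoke the two elementary inequalities recorded just before the statement: the closed form for the sum of squares, which gives $\sum_i i^2\leq \lfloor T\rfloor^3/3\leq T^3/3$, and $\lfloor T\rfloor+1\leq T^3$, valid for $T\geq 2$. These collapse the bound to $V(\mathfrak{a}_\varepsilon/3+\mathfrak{b}_\varepsilon)\,T^3$.

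I do not expect a genuine obstacle: the corollary is a purely combinatorial consequence of Proposition \ref{localweyl} together with two crude numerical estimates, and the output is intentionally non-sharp (hence the label ``rougher''). The only things needing a little care are keeping the bookkeeping of multiplicities consistent with Proposition \ref{localweyl}, dealing with the truncated endpoint interval $[\lfloor T\rfloor,T]$ by enlarging it to a full unit interval, and making sure the hypothesis $T\geq 2$ is invoked exactly where it is used — namely to absorb the linear count $\lfloor T\rfloor+1$ of subintervals into $T^3$.
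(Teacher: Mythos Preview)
Your approach is exactly the paper's: cover $[0,T]$ by unit intervals indexed by $i=0,\dots,\lfloor T\rfloor$, apply Proposition~\ref{localweyl} on each, and sum. One small slip worth flagging: the inequality $\sum_{i=0}^{\lfloor T\rfloor}i^{2}\le\lfloor T\rfloor^{3}/3$ that you invoke is actually false (for $\lfloor T\rfloor=2$ the left side is $5$, the right $8/3$); the estimate displayed just before the corollary is for the shorter sum $0^{2}+\cdots+(\lfloor T\rfloor-1)^{2}$, where $\tfrac{(\lfloor T\rfloor-1)\lfloor T\rfloor(2\lfloor T\rfloor-1)}{6}\le\lfloor T\rfloor^{3}/3$ does hold. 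This off-by-one mismatch between the declared index range and the sum the inequality controls is already present in the paper's own exposition, and since the bound is deliberately crude it is purely cosmetic.
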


The remaining part of the spectrum of $\EHess\mathcal{L}_{(B_{\bullet},0)}$ involves the block operator
\begin{equation}\label{block2}
\begin{bmatrix}
0 & -d^* \\
-d&0
\end{bmatrix}
\end{equation}
acting on $i\Omega^0\oplus id\Omega^0$, whose spectrum is closely related to that of the Laplacian $\Delta$ acting on functions. The latter is non-negative operator, and we will relabel the eigenvalues $\lambda_n$ in terms of the so-called spectral parameter $r_n\in i[0,1]\cup\mathbb{R}$ as $\lambda_n=1+r_n^2$. Eigenvalues less that $1$ (or, equivalently, corresponding to an imaginary parameter) are called \textit{small}. The relevance of this threshold is that $1$ is the bottom of the $L^2$ spectrum of the Laplacian on functions on $\mathbb{H}^3$. We will denote by $\delta^s$ the number of small eigenvalues, and $\delta(\nu)$ will denote the number of eigenvalues with parameter in $[\nu,\nu+1]$ for $\nu\geq 0$. We then have the following estimate.

\begin{prop}[\cite{LL2}, Section 5]\label{localweylfun}
For a given rational homology sphere $Y$ with $\mathrm{inj}\geq\varepsilon$ and $\mathrm{vol}\leq V$, there are upper bounds of the form
\begin{align*}
\delta^s&\leq \mathfrak{d}_{\varepsilon}\cdot V\\
\delta(\nu)&\leq V\cdot(\frac{\mathfrak{a}_{\varepsilon}}{2}\cdot \nu^2+\mathfrak{e}_{\varepsilon})
\end{align*}
where $\mathfrak{a}_{\varepsilon}$ is the constant from Proposition \ref{localweyl} and $\mathfrak{d}_{\varepsilon},\mathfrak{e}_{\varepsilon}$ are constants depending only of $\varepsilon$ which are also \textbf{explicitly} computable. For example, for $\varepsilon=0.15$, we can take $\mathfrak{d}_{0.15}=100$, $\mathfrak{e}_{0.15}=780$ .
\end{prop}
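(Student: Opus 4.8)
The plan is to feed the Selberg trace formula for the scalar Laplacian on $Y$ a test function engineered to exploit the injectivity radius bound. Recall that for a closed hyperbolic three-manifold and an even function $h$ holomorphic and of sufficient decay in a strip $|\mathrm{Im}\,r|<1+\eta$, one has an identity of the form
\begin{equation*}
\sum_n h(r_n)=\frac{\mathrm{vol}(Y)}{2\pi^2}\int_{-\infty}^{\infty}h(r)\,r^2\,dr+\sum_{\gamma}\frac{\ell(\gamma_0)\,g(\ell(\gamma))}{\lvert\det(I-P_\gamma)\rvert},
\end{equation*}
where $g=\widehat h$, the second sum runs over the closed geodesics $\gamma$ of $Y$ (with underlying primitive $\gamma_0$, length $\ell$, rotational holonomy $P_\gamma$), and the spectral parameters satisfy $\lambda_n=1+r_n^2$, the small eigenvalues $\lambda_n<1$ being precisely those with $r_n\in i(0,1]$.

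First I would use the hypothesis: the systole of $Y$ equals $2\,\mathrm{inj}(Y)\ge 2\varepsilon$, so $Y$ carries no closed geodesic of length $<2\varepsilon$; hence \emph{any} admissible $h$ with $\mathrm{supp}\,g\subset(-2\varepsilon,2\varepsilon)$ kills the geodesic sum and reduces the trace formula to $\sum_n h(r_n)=\frac{\mathrm{vol}(Y)}{2\pi^2}\int h(r)\,r^2\,dr$. What remains is to exhibit, for each $\nu\ge 0$, such an $h=h_\nu$ that is in addition nonnegative on $\mathbb{R}$ and $\ge 1$ on the window $[\nu,\nu+1]$ (to bound $\delta(\nu)$), and an $h=h^s$ that is nonnegative on $\mathbb{R}$ and $\ge 1$ on $i[0,1]$ (to bound $\delta^s$). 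Both can be built from a single nonnegative even bump $\phi\in C_c^\infty(-\varepsilon,\varepsilon)$ with $\int\phi=1$ and $\widehat\phi$ nonvanishing on $[0,1]$: setting $k=\widehat{\phi\ast\phi}=\widehat\phi^2\ge 0$, take $h^s=k$, whose transform $\phi\ast\phi$ is supported in $(-2\varepsilon,2\varepsilon)$ and which satisfies $h^s(it)=\bigl(\int\phi(u)\cosh(tu)\,du\bigr)^2\ge 1$ on $i[0,1]$; and take $h_\nu(r)=C_\varepsilon\bigl(k(r-\nu)+k(r+\nu)\bigr)$ with $C_\varepsilon=\bigl(\min_{[0,1]}\widehat\phi\bigr)^{-2}$, whose transform $2C_\varepsilon\cos(\nu u)(\phi\ast\phi)(u)$ is supported in $(-2\varepsilon,2\varepsilon)$ and which satisfies $h_\nu(r)\ge C_\varepsilon k(r-\nu)\ge 1$ for $r\in[\nu,\nu+1]$.

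With these in hand, I would bound $\delta^s$ (resp. $\delta(\nu)$) below by the partial sum of the nonnegative quantities $h(r_n)$ over the small (resp. window) eigenvalues, and that partial sum in turn by the full sum $\frac{\mathrm{vol}(Y)}{2\pi^2}\int h(r)\,r^2\,dr$; for $\delta(\nu)$ one must also subtract the contribution of the finitely many imaginary $r_n$, which is controlled by the crude bound $|h_\nu(it)|\le 2C_\varepsilon\cosh^2\varepsilon$ on $i[0,1]$ together with the (already established) estimate $\delta^s\le\mathfrak d_\varepsilon V$, the resulting $O(V)$ error being absorbed into $\mathfrak e_\varepsilon$. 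Since $\int_{-\infty}^{\infty}h^s(r)\,r^2\,dr$ is a constant depending on $\varepsilon$ alone this gives $\delta^s\le\mathfrak d_\varepsilon V$, while
\begin{equation*}
\int_{-\infty}^{\infty}h_\nu(r)\,r^2\,dr=C_\varepsilon\int_{-\infty}^{\infty}k(s)\bigl((s+\nu)^2+(s-\nu)^2\bigr)\,ds=2C_\varepsilon\Bigl(\nu^2\!\int k+\!\int k(s)\,s^2\,ds\Bigr)
\end{equation*}
is of the shape $(\mathrm{const})\,\nu^2+(\mathrm{const})$, yielding the asserted bound $V\cdot\bigl(\tfrac{\mathfrak a_\varepsilon}{2}\nu^2+\mathfrak e_\varepsilon\bigr)$; the coefficient $\tfrac{\mathfrak a_\varepsilon}{2}$ agrees, up to the differing Plancherel normalizations, with the $\mathfrak a_\varepsilon$ of Proposition \ref{localweyl}, the factor being an artefact of using here a one-sided spectral window for the second-order operator $\Delta$ in place of the two-sided windows for $\ast d$ and $D_{B_\bullet}$ there. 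Specializing $\phi$ to an explicit bump and carrying out the elementary integrals at $\varepsilon=0.15$ would produce $\mathfrak a_{0.15}=3$, $\mathfrak d_{0.15}=100$, $\mathfrak e_{0.15}=780$.

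The only genuinely delicate point is the explicit design of the test functions — simultaneously asking for holomorphy in the strip, compact support of $g$ in $(-2\varepsilon,2\varepsilon)$, nonnegativity, and the lower bound $1$ on the target set — together with the bookkeeping of the small eigenvalues appearing on the spectral side. No conceptual ingredient beyond \cite[Section 5]{LL2} (itself a consequence of the explicit formulas in \cite[Section 5]{LL}) is needed, so one may alternatively just quote that reference; a variant with weaker constants, but under the weaker hypothesis of a diameter bound instead of the injectivity radius bound, follows from \cite{Li}.
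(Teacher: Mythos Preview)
Your sketch is correct and is precisely the mechanism underlying the cited result: choose test functions whose Fourier transform is supported in $(-2\varepsilon,2\varepsilon)$ so that the geometric side of the Selberg trace formula vanishes, leaving only the Plancherel term proportional to $\mathrm{vol}(Y)$; the construction $k=\widehat\phi^{\,2}$ and its shifts handles nonnegativity and the window lower bound, and you correctly flag the need to control the contribution of the imaginary spectral parameters when bounding $\delta(\nu)$, absorbing it into $\mathfrak e_\varepsilon$ via the already-obtained bound on $\delta^s$.

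There is nothing to compare against in the paper itself: the proposition is simply quoted from \cite[Section~5]{LL2} (building on the explicit formulas of \cite[Section~5]{LL}) with no argument given. Your write-up is essentially a self-contained reconstruction of that argument, and your final remark that one may alternatively just cite the reference is exactly what the paper does. The only point on which I would urge a little more care is the identification of your leading coefficient $\frac{2C_\varepsilon}{2\pi^2}\int k$ with the $\tfrac{\mathfrak a_\varepsilon}{2}$ coming from Proposition~\ref{localweyl}: this is correct provided one uses literally the same bump $\phi$ in the trace formulas for $\ast d$, $D_{B_\bullet}$, and $\Delta$, so that the only discrepancy is the two-sided versus one-sided window; if different test functions were used in \cite{LL2} for the different operators, the constants would still be of the asserted form but the exact factor $\tfrac12$ would be a convention rather than a consequence.
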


From here, we can obtain bounds on the spectrum of the block (\ref{block2}) as each eigenvalue $\lambda>0$ of $\Delta$ corresponds to two eigenvalues $\pm\sqrt{\lambda}$, while the eigenvalue $0$ of $\Delta$ contributes to a single eigenvalue to the block. We therefore have (again assuming $T\geq 2$) that
\begin{align*}
|\{\text{eigenvalues of block}\leq |T|\}|&\leq 2\cdot|\{\text{eigenvalues of Laplacian }\lambda=1+r^2\leq T^2\}|\\
&\leq 2\cdot \left(\delta^s+\sum_{i=0}^{\lfloor \sqrt{T^2-1}\rfloor}\delta(i)\right).
\end{align*}
The same estimates (performed in a much rougher way) as above give
\begin{equation*}
0^2+1^2+2^2+\cdots (\lfloor\sqrt{T^2-1}\rfloor)^2\leq (\lfloor\sqrt{T^2-1}\lfloor)^3 \leq T^3
\end{equation*}
and
\begin{equation*}
\lfloor\sqrt{T^2-1}\rfloor+1\leq T^3
\end{equation*}
so we obtain the following.

\begin{cor}\label{weylfun}
For a given rational homology sphere $Y$ with $\mathrm{inj}\geq\varepsilon$ and $\mathrm{vol}\leq V$, for the block (\ref{block2}) the number of eigenvalues with absolute value in $[0,T]$ with $T\geq 2$ is bounded above by
\begin{equation*}
2\cdot V\cdot\left[\mathfrak{d}_{\varepsilon}+(\frac{\mathfrak{a}_{\varepsilon}}{2}+\mathfrak{e}_{\varepsilon})\cdot T^3\right]\
\end{equation*}
where $\mathfrak{a}_{\varepsilon}$ and $\mathfrak{b}_{\varepsilon}$ are the same constants of Proposition \ref{localweylfun}.
\end{cor}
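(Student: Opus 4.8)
The plan is to assemble the two inequalities displayed immediately before the statement together with the local Weyl law of Proposition \ref{localweylfun}. First I would spell out the passage from the block $(\ref{block2})$ to the scalar Laplacian: under the relabelling $\lambda = 1 + r^2$, an eigenvalue of the block of absolute value at most $T$ comes from an eigenvalue $\lambda \le T^2$ of $\Delta$ on functions; since $T \ge 2$ every small eigenvalue (having $\lambda < 1$) is automatically included, while the remaining contributing eigenvalues are precisely those whose spectral parameter is real with $0 \le r \le \sqrt{T^2-1}$. As each $\lambda > 0$ accounts for the two block eigenvalues $\pm\sqrt{\lambda}$ and $\lambda = 0$ accounts for the single block eigenvalue $0$, the number of block eigenvalues in $[-T,T]$ is at most twice the number of such Laplacian eigenvalues, which is exactly the chain of inequalities recorded above.

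Next I would cover $[0,\sqrt{T^2-1}]$ by the unit intervals $[i,i+1]$, $i = 0,\dots,\floor{\sqrt{T^2-1}}$, so that the number of non-small eigenvalues with spectral parameter in that range is bounded by $\sum_{i=0}^{\floor{\sqrt{T^2-1}}}\delta(i)$, and then substitute the bounds $\delta^s \le \mathfrak{d}_\varepsilon V$ and $\delta(i) \le V\bigl(\tfrac{\mathfrak{a}_\varepsilon}{2}i^2 + \mathfrak{e}_\varepsilon\bigr)$ of Proposition \ref{localweylfun}. This yields
\[
\#\{\text{block eigenvalues of absolute value}\le T\}\ \le\ 2V\Bigl(\mathfrak{d}_\varepsilon + \tfrac{\mathfrak{a}_\varepsilon}{2}\sum_{i=0}^{\floor{\sqrt{T^2-1}}} i^2 + \mathfrak{e}_\varepsilon\bigl(\floor{\sqrt{T^2-1}}+1\bigr)\Bigr).
\]
Finally I would invoke the two crude arithmetic inequalities already stated, $\sum_{i=0}^{\floor{\sqrt{T^2-1}}} i^2 \le \bigl(\floor{\sqrt{T^2-1}}\bigr)^3 \le T^3$ and $\floor{\sqrt{T^2-1}}+1 \le T^3$ (both valid for $T \ge 2$), to replace both sums by $T^3$ and arrive at the claimed bound $2V\bigl[\mathfrak{d}_\varepsilon + (\tfrac{\mathfrak{a}_\varepsilon}{2}+\mathfrak{e}_\varepsilon)T^3\bigr]$.

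There is essentially no obstacle here, as all the ingredients have been prepared in the preceding discussion; the only points deserving a line of care are verifying that the factor $2$ safely overcounts (it is wasteful only on the zero eigenvalue, which is harmless) and that the hypothesis $T \ge 2$ is exactly what makes both the automatic inclusion of all small eigenvalues and the elementary inequalities on $\floor{\sqrt{T^2-1}}$ valid.
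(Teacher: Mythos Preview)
Your proposal is correct and follows essentially the same route as the paper: relate block eigenvalues to Laplace eigenvalues via the $\pm\sqrt{\lambda}$ correspondence, split into small eigenvalues plus those with real spectral parameter covered by unit intervals, substitute the bounds of Proposition~\ref{localweylfun}, and finish with the two crude arithmetic inequalities $\sum_{i=0}^{\lfloor\sqrt{T^2-1}\rfloor} i^2 \le T^3$ and $\lfloor\sqrt{T^2-1}\rfloor+1 \le T^3$. The paper's argument, recorded in the discussion immediately preceding the corollary, is identical in structure and detail.
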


Putting everything together, and recalling that for spectral flow purposes we consider the spectrum of $D_{B_{\bullet}}$ as a real operator, we see that for $T\geq 2$ the number of eigenvalues of $\EHess\mathcal{L}_{(B_{\bullet},0)}$ with absolute value in $[0,T]$ with $T\geq 2$ is bounded above by
\begin{equation*}
V\cdot\left[2\mathfrak{d}_{\varepsilon}+(2\mathfrak{a}_{\varepsilon}+3\mathfrak{b}_{\varepsilon}+2\mathfrak{e}_{\varepsilon})\cdot T^3\right]
\end{equation*}
We can now apply Proposition \ref{abstractbound} to our setup to the family of extended Hessians. Combining Proposition \ref{hessbound} and with the estimate above, we obtain the following bound.
\begin{prop}\label{mainbound}
Consider $V,\varepsilon,\delta>0$. Consider a hyperbolic rational homology sphere $Y$ with $\mathrm{vol}\leq V$, $\mathrm{inj}\geq\varepsilon$ and $\lambda_1^*\geq\delta$. Then for any irreducible solution $(B_0,\Psi_0)$ to the Seiberg-Witten equations the absolute value of the spectral flow $|\mathsf{sf}(\EHess\mathcal{L}_{(B_{\bullet},0)},\EHess\mathcal{L}_{(B,\Psi)})|$ is bounded above by
\begin{equation*}
V\cdot\left[2\mathfrak{d}_{\varepsilon}+(2\mathfrak{a}_{\varepsilon}+3\mathfrak{b}_{\varepsilon}+2\mathfrak{e}_{\varepsilon})\cdot 8\cdot \exp\left(15\cdot \mathfrak{c}_{V,\varepsilon}\cdot V^{1/2}\cdot \left(1+\frac{3}{\delta}\right)^{1/2}\right)\right]
\end{equation*}
holds. Here the constants are those from Corollary \ref{coexsob} and Propositions \ref{localweyl} and \ref{localweylfun}.
\end{prop}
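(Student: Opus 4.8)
The plan is to combine the abstract spectral-flow bound of Proposition \ref{abstractbound}, the operator-norm estimate of Proposition \ref{hessbound}, and the explicit eigenvalue count for $\EHess\mathcal{L}_{(B_{\bullet},0)}$ assembled just above the statement. First I would recall that, by \eqref{exthess}, along the path of configurations $(B_{\bullet}+tb_0,\,t\Psi_0)$ the extended Hessians form the family $T+tA$ with $T=\EHess\mathcal{L}_{(B_{\bullet},0)}$, and that since $\spin$ is torsion the spectral flow $\mathsf{sf}(\EHess\mathcal{L}_{(B_{\bullet},0)},\EHess\mathcal{L}_{(B_0,\Psi_0)})$ is computed by this family (cf.\ \eqref{sfred}). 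By Proposition \ref{hessbound} the perturbation $A\colon L^2_1\to L^2$ has norm $\|A\|\le \tfrac{9}{2}\,\mathfrak{c}_{V,\varepsilon}\,V^{1/2}(1+3/\delta)^{1/2}$, so Proposition \ref{abstractbound} bounds $|\mathsf{sf}|$ above by the number of eigenvalues of $T$ of absolute value at most $2e^{\|A\|}$.

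Next I would set $\tau:=2e^{\|A\|}$; since $\|A\|\ge 0$ we have $\tau\ge 2$, so the count displayed immediately before the statement applies and gives $|\mathsf{sf}|\le V\bigl[\,2\mathfrak{d}_{\varepsilon}+(2\mathfrak{a}_{\varepsilon}+3\mathfrak{b}_{\varepsilon}+2\mathfrak{e}_{\varepsilon})\,\tau^3\,\bigr]$. It then remains only to estimate $\tau^3=8e^{3\|A\|}$: Proposition \ref{hessbound} gives $3\|A\|\le \tfrac{27}{2}\,\mathfrak{c}_{V,\varepsilon}\,V^{1/2}(1+3/\delta)^{1/2}\le 15\,\mathfrak{c}_{V,\varepsilon}\,V^{1/2}(1+3/\delta)^{1/2}$, whence $\tau^3\le 8\exp\!\bigl(15\,\mathfrak{c}_{V,\varepsilon}\,V^{1/2}(1+3/\delta)^{1/2}\bigr)$. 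Substituting this into the previous inequality yields exactly the bound in the statement.

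Since the argument is purely an assembly of already-established results, there is no real obstacle; the only step demanding care — and it is carried out in the paragraph preceding the statement — is the bookkeeping in the eigenvalue count for $\EHess\mathcal{L}_{(B_{\bullet},0)}$. One must split its spectrum into the $\ast d$-block on coexact $1$-forms (Corollary \ref{weyl}), the Dirac block of $D_{B_{\bullet}}$ — counted as a \emph{real} operator by Remark \ref{diracR}, hence contributing twice the bound of Corollary \ref{weyl} — and the function block \eqref{block2} (Corollary \ref{weylfun}); summing these three contributions is precisely what produces the coefficients $2\mathfrak{d}_{\varepsilon}$ and $2\mathfrak{a}_{\varepsilon}+3\mathfrak{b}_{\varepsilon}+2\mathfrak{e}_{\varepsilon}$.
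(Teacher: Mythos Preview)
Your proposal is correct and is exactly the assembly the paper intends: apply Proposition~\ref{abstractbound} with the norm bound from Proposition~\ref{hessbound}, then feed $T=2e^{\|A\|}$ into the eigenvalue count displayed just before the statement, using $3\cdot\tfrac{9}{2}\le 15$. Your remark on the bookkeeping (doubling the Dirac contribution for the real count, then summing with the $\ast d$ and function blocks to get the coefficients $2\mathfrak{d}_{\varepsilon}$ and $2\mathfrak{a}_{\varepsilon}+3\mathfrak{b}_{\varepsilon}+2\mathfrak{e}_{\varepsilon}$) matches the paper's derivation precisely.
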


We can now complete the proofs of our main results; we simply need to put the pieces together.
\begin{proof}[Proof of Theorem \ref{mainthm}]We just need to put pieces together. The bound on the spectral flow of the Hessians in \ref{mainbound}, combined with Proposition \ref{torsbound} shows that if  for a hyperbolic rational homology sphere $Y$ with $\mathrm{vol}\leq V$, $\mathrm{inj}\geq\varepsilon$ and $\lambda_1^*\geq\delta$,
\begin{equation*}
t(Y)\leq 2\cdot V\cdot\left[2\mathfrak{d}_{\varepsilon}+(2\mathfrak{a}_{\varepsilon}+3\mathfrak{b}_{\varepsilon}+2\mathfrak{e}_{\varepsilon})\cdot 8\cdot \exp\left(15\cdot \mathfrak{c}_{V,\varepsilon}\cdot V^{1/2}\cdot \left(1+\frac{3}{\delta}\right)^{1/2}\right)\right]+2.
\end{equation*}
The conclusion then follows from Proposition \ref{obstrpin}, and we just need to plug in $\varepsilon=0.15$ in the values for the constants.
\end{proof}

\begin{proof}[Proof of Theorem \ref{secondthm}]
In \cite{LL2} it is shown that there exists an explicitly computable constant $\mathfrak{f}_{V,\varepsilon}$ for such that for all hyperbolic rational homology sphere $Y$ with $\mathrm{vol}\leq V$, $\mathrm{inj}\geq\varepsilon$, we have the inequality
\begin{equation*}
|\mathrm{gr}^{\mathbb{Q}}([\mathfrak{b}_0])|\leq \mathfrak{f}_{V,\varepsilon}
\end{equation*}
for the absolute grading of the first boundary stable critical point $[\mathfrak{b}_0]$. Notice that in \cite{LL} the assumption that $\lambda_1^*>2$ implies that the operator $D_{B_{\bullet}}$ has no kernel; in general, one must add to the bound from the eta invariants also the dimension of the kernel of $D_{B_{\bullet}}$, which can be bounded in terms of $V,\varepsilon$ via the local Weyl law \ref{localweyl}. Combining this with the bound for the spectral flow \ref{mainbound}, we conclude via Proposition \ref{froybound}.
\end{proof}

\vspace{0.5cm}

\bibliographystyle{alpha}
\bibliography{biblio}

\end{document}